\newtheorem{theorem}{Theorem}[section]
\newtheorem{lemma}[theorem]{Lemma}
\newtheorem{proposition}[theorem]{Proposition}
\newtheorem{corollary}[theorem]{Corollary}
\newtheorem{assumption}[theorem]{Assumption}
\theoremstyle{definition}
\newtheorem{definition}[theorem]{Definition}
\newtheorem{remark}[theorem]{Remark}
\numberwithin{equation}{section}
\providecommand{\norm}[1]{\lVert#1\rVert} 
\providecommand{\abs}[1]{\lvert#1\rvert} 
\DeclareMathOperator{\Ran}{Ran}
\newcommand{\au}{\underline{a}}
\newcommand{\tu}{\underline{t}}
\newcommand{\su}{\underline{s}}
\newcommand{\gu}{\underline{g}}
\newcommand{\R}{\mathbb{R}}
\newcommand{\T}{\mathbb{T}}
\newcommand{\C}{\mathbb{C}}
\newcommand{\Z}{\mathbb{Z}}
\newcommand{\N}{\mathbb{N}}
\newcommand{\B}{\mathbb{B}}
\newcommand{\cH}{{\mathcal H}}
\newcommand{\cL}{{\mathcal L}}
\newcommand{\cR}{{\mathcal R}}
\newcommand{\cS}{{\mathcal S}}
\newcommand{\cV}{{\mathcal V}}
\newcommand{\cX}{{\mathcal X}}
\newcommand{\Me}{{\mathcal M}}
\newcommand{\Ke}{{\mathcal K}}
\newcommand{\Ge}{{\mathcal G}}
\newcommand{\Ie}{{\mathcal I}}
\newcommand{\Ee}{{\mathcal E}}
\newcommand{\Be}{{\mathcal B}}
\newcommand\restr[2]{{
 \left.\kern-\nulldelimiterspace 
 #1 
 \vphantom{\big|} 
 \right|_{#2} 
 }}
\newcommand{\verts}{{\mathcal V}}
\def\mv{\mathsf{v}}
\def\me{\mathsf{e}}
 \DeclareMathOperator{\Real}{Re}
\title[Time-graphs and evolution equations]{If time were a graph, what would\\ evolution equations look like?}
\subjclass[2010]{Primary: 47D99, Secondary: 47D06, 35B10, 34G10}
\keywords{Evolution equations, Cauchy problems, time evolution on graphs}
\author[Hussein]{Amru Hussein} 
\address{Amru Hussein, Department of Mathematics,
	TU Kaiserslautern,  Paul-Ehrlich-Stra\ss e 31, 67663 Kaisers\-lautern, Germany}
\email{hussein@mathematik.uni-kl.de}
\author[Mugnolo]{Delio Mugnolo}
\address{Delio Mugnolo, Chair of Analysis, Faculty of Mathematics and Computer Science, University of Hagen, 58084 Hagen, Germany}
\email{delio.mugnolo@fernuni-hagen.de}
\thanks{The second author was partially supported by the Deutsche Forschungsgemeinschaft (Grant 397230547).}
\begin{document}

\maketitle

\begin{center}
	\emph{Dedicated to Matthias Hieber on the occasion of his 60th birthday and in recognition of his brilliant work pointing to the future}
\end{center}

\begin{abstract}
Linear evolution equations are considered usually for the time variable being defined on an interval where typically initial conditions or time-periodicity of solutions are required to single out certain solutions. 
Here we would like to make a point of allowing time to be defined on a metric graph or network where on the branching points coupling conditions are imposed such that time can have ramifications and even loops. This not only generalizes the classical setting and allows for more freedom in the modeling of coupled and interacting systems of evolution equations, but it also provides a unified framework for initial value and time-periodic problems.
For these time-graph Cauchy problems questions of well-posedness and regularity of solutions for parabolic problems are studied along with the question of which time-graph Cauchy problems cannot be reduced to an iteratively solvable sequence of Cauchy problems on intervals. 
Based on two different approaches -- an application of the Kalton--Weis theorem on the sum of closed operators and an explicit computation of a Green's function -- we present the main well-posedness and regularity results. We further study some qualitative properties of solutions. While
 we mainly focus on parabolic problems
we also explain how other Cauchy problems can be studied along the same lines.  
 This is exemplified by discussing  coupled systems with constraints that are non-local in time akin to periodicity.
\end{abstract}


\section{Introduction}

Time has classically been considered as a linear phenomenon, especially in western cultures. This has been clearly mirrored in the physical description of the world, all the way from ancient Greek philosophy to modern partial differential equations of mathematical physics. 
Many real world phenomena can be -- more or less naively -- modeled as
abstract Cauchy problems
\begin{align}\label{eq:linear-inho}
\partial_t \psi(t) - A\psi(t)=f(t),
\end{align}
such as the heat, transport or Schrödinger equation, which are classically considered with domain for the time variable $t$ in a finite interval $[0,a]$ or a half-line $[0,\infty)$, and there cannot be a unique solution until an initial condition
$\psi(0)=g$ is imposed. Here, for simplicity one may have in mind a sectorial operator $-A$ in a Hilbert space $X$.

The western philosophy has, ever since Aristotle~\cite{Coo05} and perhaps Heraclitus, most commonly regarded time as a \textit{linear} instance that allows to order events according to the notions of  \textit{before} and \textit{after}: similar ideas are also typical in western monotheistic religions. It is folklore that, throughout the world, different cultures have had diverging approaches to the interpretation of time: some religions of Indian origin -- most notably Hinduism and Jainism, unlike Buddhism~\cite{Abe97,Cow99} -- postulate that the time consists of ages featuring repeating patterns, leading to a cyclic existence described by \textit{Kālacakra}; but also the cosmological implications of the \textit{Xiuhmolpilli} (52-year-cycles in Aztec calendar) or the \textit{Bak'tun} (144,000-day-cycles in Maya calendar) suggest a cyclic understanding of time~\cite{Mar12}, with such cycles conveniently clocking existence.

This does not necessarily lead to mathematical clashes: indeed, if the time variable $t$ is cyclic and hence lives in a torus $\mathbb{S}^1$ or the full real line $\R$, then looking for solutions of~\eqref{eq:linear-inho} amounts to inquire existence of periodic solutions. 

In each case the time domain is an oriented one-dimensional manifold, thus there is a clear direction at each point in time and a well-defined time before and after it. Going beyond this, there are different perceptions of time expressed for instance in the multiverse interpretation of quantum mechanics or in the discussions on closed time-like curves in general relativity. 
More recently, the theoretical physicist Carlo Rovelli has been advocating the necessity of giving up even the weakly ordered structure offered by Albert Einstein's conception of time. He writes in~\cite[Chapter 6]{Rov18}:

\begin{quote}
\emph{None of the pieces that time has lost (singularity, direction,
independence, the present, continuity) puts into question the fact that the	world is a network of events. On the one hand, there was time, with its many determinations; on the other, the simple fact that nothing is: things happen.}

\emph{The absence of the quantity ``time'' in the fundamental equations does not imply a world that is frozen and immobile. On the contrary, it implies a world in which change is ubiquitous, without being ordered by Father Time; without innumerable events being necessarily distributed in good order, or along the single Newtonian time line, or according to Einstein's elegant geometry.}
\end{quote}

In this article, we would like to invite the reader to participate in a thought experiment and to assume time not to consist of a one-dimensional manifold, but rather of a \textit{metric graph} or \textit{network}. Such ramified structures consist -- roughly speaking -- of intervals glued together at their endpoints and allow for more freedom in the modeling of evolutionary systems in real and some possibly hypothetical applications. The purpose of this note is to widen the scope of classical evolution equations and to show how graphs can be used to model time evolution. The main idea and recurrent motive is to consider initial conditions as boundary conditions in time: we will make this more precise in the following.

We notice in passing that there do exist classical settings where the notion of one-dimensional time is generalized: In the context of analytic semigroups time is allowed to be in a sector of the complex plane as sketched in Figure~\ref{Fig1}.(d). This has a plethora of pleasant mathematical consequences, but it is not evident how to make sense of it physically.
Instead, we reckon that allowing time to live on network-like structures may have a practical interpretation as will be discussed in terms of examples.

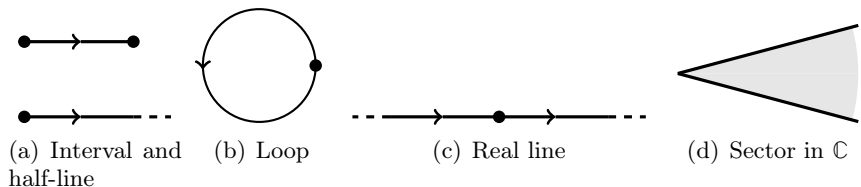
\begin{figure}[h]
	\begin{center}
		\subfigure[Interval and half-line]{
			\begin{tikzpicture}[scale=0.5]
			\fill[black] (0,0) circle (1ex);
			\draw[->, black, very thick] (0,0) -- (1.5,0);
			\draw[black, very thick] (1.5,0) -- (2.9,0);
			\fill[black] (2.9,0) circle (1ex);
			\fill[black] (0,-2) circle (1ex);
			\draw[->, black, very thick] (0,-2) -- (1.5,-2);
			\draw[black, very thick] (1.5,-2) -- (2.9,-2);
			\draw[black, very thick, dashed] (2.9,-2) -- (3.9,-2);
			\end{tikzpicture}
		}
		\subfigure[Loop]{
			\begin{tikzpicture}[scale=0.5]
			\draw[thick] (0,0) arc (0:360:1.5);
			\fill[black] (0,0) circle (1ex);
			\draw[->, black, very thick] (-3,0) -- (-3,-0.1);
			\end{tikzpicture}
		}	
		\subfigure[Real line]{
			\begin{tikzpicture}[scale=0.5]
			\fill[black] (0,-2) circle (1ex);
			\draw[->, black, very thick] (0,-2) -- (1.5,-2);
			\draw[black, very thick] (1.5,-2) -- (2.9,-2);
			\draw[black, very thick, dashed] (2.9,-2) -- (3.9,-2);
			\draw[black, very thick] (-1.5,-2) -- (0,-2);
			\draw[->, black, very thick] (-2.9,-2) -- (-1.5,-2);
			\draw[black, very thick, dashed] (-2.9,-2) -- (-3.9,-2);
			\end{tikzpicture}	
		}
				\subfigure[Sector in $\C$]{
				\begin{tikzpicture}[scale=0.8]
				\fill[fill=gray!20!white]
				(0,0) -- (3,0) arc (360:345:3);
				\fill[fill=gray!20!white]
				(0,0) -- (3,0) arc (0:15:3);
				\draw[-, black, very thick] (0,0) -- (3,-0.8); 
			\draw[-, black, very thick] (0,0) -- (3,0.8); 
				\end{tikzpicture} 	
			}
\caption{Classical time domains for evolution equations}\label{Fig1}
	\end{center}
\end{figure}

\subsection{From initial conditions to boundary conditions in time}
To begin with, considering the classical cases illustrated in Figure~\ref{Fig1}.(a)--\ref{Fig1}.(c) one 
first notices that for the real line or the torus there are no initial conditions, and in fact adding initial conditions would over-determine the system. For a bounded interval or the half-line, the initial value problem can be decomposed using linearity into two separate problems
\begin{align}\label{eq:icbc}
\begin{split}
\partial_t \psi_f(t) - A\psi_f(t)&=f(t), \quad \psi(0)=0, \quad \hbox{ and } \\ \partial_t \psi_0(t) - A\psi_0(t)&=0, \qquad \, \psi(0)=g.
\end{split}
\end{align}
Both equations can be analyzed in terms of semigroup theory: If $A$ generates a $C_0$-semigroup then the mild solutions to these equations are given by the variation of constants formula and the semigroup, i.e.,
\begin{align*}
\psi_f(t)=\int_0^t e^{A(t-s)}f(s) ds \quad \hbox{ and } \quad \psi_0(t)=e^{tA}g,
\end{align*}
where the solution to the inhomogeneous initial value problem
is $\psi=\psi_f +\psi_0$ and the solution space depends on the regularity of the data.

The problem on an interval $(0,a)$ with periodicity conditions exhibits similarities with the first equation in \eqref{eq:icbc}, and it can be written as 
\begin{align}\label{eq:icbcper}
\partial_t \psi(t) - A\psi(t)=f(t), \quad \psi(0)-\psi(a)=0.
\end{align}
This already indicates which possible `initial conditions' -- or rather `inhomogeneous boundary conditions in time' -- can be imposed, namely one can solve
\begin{align}\label{eq:icbcper_jump}
\partial_t \psi(t) - A\psi(t)=0, \quad \psi(0)-\psi(a)=g.
\end{align}
This means there is no freedom left for initial conditions, but one is free to choose any \textit{fixed jump condition} $\psi(0)-\psi(a)=g$, and the solution can be expressed (provided $\mathds{1}- e^{a \, A}$ is  invertible) as
\begin{equation}\label{eq:per0}
\psi_0(t)
= e^{tA}(\mathds{1}- e^{a\, A})^{-1}g \quad \hbox{for }t \in (0,a), 
\end{equation}
which solves \eqref{eq:icbcper_jump} on $(0,a)$. This solution can be extended to the full real line, it then solves
\begin{align*}
	\partial_t \psi(t) - A\psi(t)=0, \quad \psi(na)-\psi((n+1)a)= e^{naA} g \quad \hbox{for } n \in \Z.
\end{align*}
In particular, this extension does not lift to a solution on the torus. So, in order to interpret time as a loop, one has to consider the periodic extension of \eqref{eq:per0}. This is in general a non-continuous periodic function on $\R$ which then lifts to a function on the torus.

 The regularity of $\psi_0$ given in \eqref{eq:per0} clearly depends on the regularity of $(\mathds{1}- e^{a\, A})^{-1}g$ and therefore on $g$, as well as on the mapping properties of $(\mathds{1}- e^{a\, A})^{-1}$. The usual notions of mild, strong and classical solutions defined edge-wise  in the same way as e.g. in~\cite{EngNag00,DenHiePru03} can be naturally extended to the setting of metric graphs in time, see also Subsection~\ref{subsec:mild}.

Considering only the first equation in \eqref{eq:icbc}, this can be solved -- instead of using the variation of constants formula -- by means of operator theory by finding realizations of $\partial_t$ with initial condition $\psi(0)=0$ such that the sum of closed operators $\partial_t -A$ is invertible. For $L^p$-spaces in time this approach succeeded where the essential ingredient is the theorem of Kalton and Weis on the sum of closed operators. 
Similarly, equation \eqref{eq:icbcper} can be solved by considering a periodic realization of the time derivative.

\subsection{Time-graph Cauchy problem}

Consider again evolution equations whose time domain are intervals, like in Figure~\ref{Fig1}.(a)--\ref{Fig1}.(c): both under initial and periodicity
conditions they can be split into a part with force, but homogeneous boundary condition in time, and a part without force and inhomogeneous boundary condition in time.
We therefore consider finitely many inhomogeneous evolution equations
\begin{align*}
(\partial_t - A_i) \psi_i = f_i \quad \hbox{on} \quad (0,a_i)\quad \hbox{for each} \quad i=1, \ldots, n,
\end{align*}
on time intervals of length $a_i>0$, $i=1,\ldots,n$, where we assume that $A_i$ are generators of analytic semigroups in Hilbert spaces $X_i$, $f_i\in L^2(0,a_i;X_i)$ are given, and the coupling is defined by 
\begin{align}\label{eq:introB}
\begin{bmatrix}
\psi_1(0) \\ \vdots \\ \psi_n(0)
\end{bmatrix} - \B \begin{bmatrix}
\psi_1(a_1) \\ \vdots \\ \psi_n(a_n)
\end{bmatrix}= \begin{bmatrix}
g_1\\ \vdots \\ g_n
\end{bmatrix},
\end{align}
where $\B$ is a bounded operator in $X_1\oplus \ldots \oplus X_n$ which encodes the geometry of the graph by means of transmission conditions, and $g_i\in X_i$ are given `inhomogeneous boundary conditions in time' in analogy to the fixed jump conditions for the periodic case.
This class of  \textit{time-graph  Cauchy problems} comprises the classical settings, where the classical initial value problem corresponds to $\B=0$, 
and the time-periodic problem is given by $\B=\mathds{1}$ with $g_i=0$ for $i=1, \ldots, n$.  

We present two strategies to solve this problem:
First, when all $g_i=0$, one can apply the Kalton--Weis theorem on the sums of closed operators for suitable time and space operators. 
Second, going beyond this, explicit formulae in terms of semigroups and transmission conditions as in \eqref{eq:per0} can be derived by a Green's function \textit{Ansatz} interpreting the system $\partial_t - A_i$ as a system of vector valued ordinary differential equation in time where inhomogeneous boundary conditions in time are included.

\subsection{First examples, results and outlook}
As a next step towards more non-standard examples, one can extend the time-periodic situation: 
instead of pure periodicity, we may for instance impose a phase shift after one time-period $a>0$, i.e.,
\begin{align*}
	\partial_t \psi(t) - A\psi(t)=f(t), \quad \psi(0) = \alpha \psi(a) \quad \hbox{for some} \quad \alpha\in \C,
\end{align*} 
which corresponds to $\B=\alpha\cdot \mathds{1}$ and $g=0$.
As we will see later in Section~\ref{sec:Green} the solution to this problem is given -- provided that $A$ generates an analytic semigroup and the operator $1-\alpha e^{aA}$ is invertible -- by 
\begin{align}\label{eq:phaseshift}
	\psi(t) = \int_0^t e^{(t-s)A} f(s) ds + e^{tA}\int_0^a (1-\alpha e^{aA})^{-1} \alpha e^{(a-s)A} f(s) ds.  
\end{align}
In particular, extending this to the real line, one has
\begin{align*}
	\psi(na) - \alpha \psi((n+1)a)=
	 \int_0^{na} e^{(na-s)A} f(s) ds -\alpha \int_a^{(n+1)a}   e^{((n+1)a-s)A} f(s) ds, \quad n\in \Z,  
\end{align*}
and the
phase shift occurs only after the first  time-period starting at $0$ and ending at $a$, i.e., for $n=0$, as sketched in Figure~\ref{fig:loops}~(b). If instead one considers $\psi$ as a function on $[0,a)$ and extends it periodically to $\R$ by setting $\psi(t+an)=\psi(t)$ for $n\in \Z$, then for $\alpha\neq 1$ this  $\psi$ is a discontinuous periodic function with the additional property that
\begin{align*}
\psi(na)-\alpha \lim_{\varepsilon\to 0-}\psi((n+1)(a+\varepsilon))=0	\quad \hbox{for } n\in \Z,
\end{align*}
and this can be represented by   Figure~\ref{fig:loops}~(a). Another model is sketched in Figure~\ref{fig:loops}~(c). Here,  time is represented by the real line, a phase shift with phase $\alpha_1$ occurs at time $a_1$, and a second phase shift with phase $\alpha_2$ occurs at time $a_2$. 
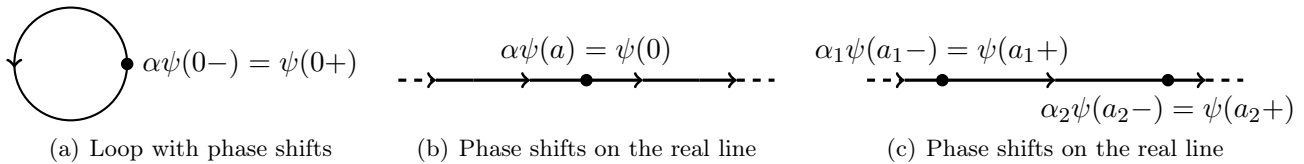
\begin{figure}[h]
	\begin{center}
		\subfigure[Loop with phase shifts]{	
			\begin{tikzpicture}[scale=0.5]
			\draw[thick] (0,0) arc (0:360:1.5);
			\fill[black] (0,0) circle (1ex);
			\draw[->, black, very thick] (-3,0) -- (-3,-0.1);
			\draw (0,0) node[right=2pt] {$\alpha \psi(0-)=\psi(0+)$};
			\end{tikzpicture}
		}	
		\subfigure[Phase shifts on the real line]{
		\begin{tikzpicture}[baseline = {(0,-.55)}, scale=0.5]
			\fill[black] (3,0) circle (1ex);
			\draw[->, black, very thick] (0,0) -- (1.5,0);
			\draw[black, very thick] (1.5,0) -- (3,0);
			\draw[->, black, very thick] (3,0) -- (4.5,0);
			\draw[ black, very thick] (4.5,0) -- (6,0);
			\draw (3,0) node[above=2pt] {$\alpha\psi(a)=\psi(0)$};	
				\draw[->, black, very thick] (6,0) -- (7,0);
			\draw[dashed, black, very thick] (7,0) -- (8,0);
			\draw[black, very thick] (-1,0) -- (0,0);
			\draw[dashed, ->,black, very thick] (-2,0) -- (-1,0);
					\end{tikzpicture}	
	}
	\subfigure[Phase shifts on the real line]{
		\begin{tikzpicture}[baseline = {(0,-.55)}, scale=0.5]
	\fill[black] (0,0) circle (1ex);
\draw[->, black, very thick] (0,0) -- (3,0);
\draw[ black, very thick] (3,0) -- (6,0);
\draw (0,0) node[above=2pt] {$\alpha_1\psi(a_1-)=\psi(a_1+)$};	
\fill[black] (6,0) circle (1ex);
\draw (6,0) node[below=2pt] {$\alpha_2\psi(a_2-)=\psi(a_2+)$};	
	\draw[->, black, very thick] (6,0) -- (7,0);
	\draw[dashed, black, very thick] (7,0) -- (8,0);
	\draw[black, very thick] (-1,0) -- (0,0);
\draw[dashed, ->,black, very thick] (-2,0) -- (-1,0);
\end{tikzpicture}	
}

	\end{center}
	\caption{Phase shifts}
\label{fig:loops}	
\end{figure}

To illustrate various features of time-graphs one can consider the graphs depicted in Figure~\ref{Fig2}. Building on the initial example of time-periodicity, one can take its state at a certain time as input to a new system. 
This would correspond to the tadpole-like graph in Figure~\ref{Fig2}.(a) with matching of the type
\begin{align*}
\psi_1(a_1) = \psi_1(0), \quad \psi_2(0)=\psi_1(a_1), \quad \hbox{i.e.,}\quad \B= \begin{bmatrix}
1 & 0 \\ 1 & 0 
\end{bmatrix}, \quad g=0,
\end{align*}
where $\psi_1$ lives on the loop and $\psi_2$ lives on the adjacent interval. 
\begin{figure}[h]
	\begin{center}
		\subfigure[Tadpole-like graph]{	\begin{tikzpicture}[scale=0.5]
			\draw[thick] (0,0) arc (0:360:1.5);
			\fill[black] (0,0) circle (1ex);
			\draw[->, black, very thick] (-3,0) -- (-3,-0.1);
			\draw[->, black, very thick] (0,0) -- (2,0);
			\end{tikzpicture}
		}	
		\subfigure[Splitting of a system]{			
			\begin{tikzpicture}[scale=0.5]
			\fill[black] (0,0) circle (1ex);
			\draw[->, black, very thick] (0,0) -- (1,0);
			\draw[black, very thick] (1,0) -- (2,0);
			\fill[black] (2,0) circle (1ex);
			\draw[->, black, very thick] (2,0) -- (3,1);
			\draw[black, very thick] (3,1) -- (4,2);
			\draw[black, very thick] (3,-1) -- (4,-2);			
			\draw[->, black, very thick] (2,0) -- (3,-1);
			\end{tikzpicture}
		}
		\subfigure[Joining of two systems]{\begin{tikzpicture}[scale=0.5]
			\fill[black] (0,2) circle (1ex);
			\fill[black] (0,-2) circle (1ex);
			\draw[->, black, very thick] (0,2) -- (1,1);
			\draw[->, black, very thick] (0,-2) -- (1,-1);
			\draw[black, very thick] (1,1) -- (2,0);
			\draw[black, very thick] (1,-1) -- (2,0);
			\fill[black] (2,0) circle (1ex);		
			\draw[->, black, very thick] (2,0) -- (3,0);
			\draw[black, very thick] (3,0) -- (5,0);	
			\end{tikzpicture}
		}	
		\subfigure[Graph with cycle]{	\begin{tikzpicture}[scale=0.5]
			\fill[black] (0,0) circle (1ex);
			\draw[->, black, very thick] (0,0) -- (1,0);
			\draw[ black, very thick] (1,0) -- (2,0);
			\fill[black] (2,0) circle (1ex);
			\draw[thick] (2,0) arc (-180:0:1);
			\draw[->, black, very thick] (3,-1) -- (3.1,-1);
			\draw[thick] (2,0) arc (180:0:1);
			\draw[->, black, very thick] (3,1) -- (3.1,1);
			\fill[black] (4,0) circle (1ex);
			\draw[->, black, very thick] (4,0) -- (5,0);
			\draw[black, very thick] (5,0) -- (6,0);
			\end{tikzpicture}
		}
		\subfigure[Graph with several loops]{	\begin{tikzpicture}[scale=0.5]
			\draw[thick] (0,0) arc (-90:270:1.5);
			\draw[thick] (0,0) arc (-90:270:1);
			\draw[thick] (0,0) arc (-90:270:0.5);
			\fill[black] (0,0) circle (1ex);
			\draw[->, black, very thick] (0,3) -- (0.01,3);
			\draw[->, black, very thick] (0,2) -- (0.01,2);
			\draw[->, black, very thick] (0,1) -- (0.01,1);
			\draw[->, black, very thick] (0,0) -- (2,0);
			\fill[black] (3,0) circle (1ex);
			\fill[black] (-3,0) circle (1ex);
			\draw[black, very thick] (2,0) -- (3,0);
			\draw[->, black, very thick] (-3,0) -- (-2,0);
			\draw[black, very thick] (-2,0) -- (0,0);
			\end{tikzpicture}
		}
		\subfigure[Tree graph]{	\begin{tikzpicture}[scale=0.6]
			\fill[black] (0,0) circle (1ex);
			\draw[->, black, very thick] (0,0) -- (0.75,0);
			\draw[black, very thick] (0.5,0) -- (1,0);
			\fill[black] (1,0) circle (1ex);
			\draw[->, black, very thick] (1,0) -- (1.5,0.5);
			\draw[black, very thick] (1.5,0.5) -- (2,1);
			\fill[black] (2,1) circle (1ex);
			\draw[->, black, very thick] (1,0) -- (1.5,-0.5);
			\draw[black, very thick] (1.5,-0.5) -- (2,-1);
			\fill[black] (2,-1) circle (1ex);
			\draw[->, black, very thick] (2,1) -- (2.5,1.5);
			\draw[black, dashed] (2.5,1.5) -- (2.9,1.9);
			\draw[->, black, very thick] (2,1) -- (2.5,0.5);
			\draw[black, dashed] (2.5,0.5) -- (2.9,0.1);
			\draw[->, black, very thick] (2,-1) -- (2.5,-1.5);
			\draw[black, dashed] (2.5,-1.5) -- (2.9,-1.9);
			\draw[->, black, very thick] (2,-1) -- (2.5,-0.5);
			\draw[black, dashed] (2.5,-0.5) -- (2.9,-0.1);
			\end{tikzpicture}
		}			
			\subfigure[Time-travel graph]{	\begin{tikzpicture}[scale=0.6]
				\fill[black] (0,0) circle (1ex);
				\draw[->, black, very thick] (0,0) -- (1,0);
				\draw[black, very thick] (1,0) -- (2,0);
				\fill[black] (2,0) circle (1ex);
				\draw[->, black, very thick] (2,0) -- (3,0);
				\draw[black, very thick] (3,0) -- (4,0);
				\fill[black] (4,0) circle (1ex);
				\draw[->, black, very thick] (4,0) -- (5,0);
				\draw[black, dashed] (5,0) -- (6,0);
				\draw[->, black, very thick] (3.1,-1) -- (3,-1);
				\draw[thick] (4,0) arc (0:-180:1);
				\end{tikzpicture}
			}
		\subfigure[Time-travel-multiverse graph]{	\begin{tikzpicture}[scale=0.6]
			\fill[black] (0,0) circle (1ex);
			\draw[->, black, very thick] (0,0) -- (1,0);
			\draw[black, very thick] (1,0) -- (2,0);
			\fill[black] (2,0) circle (1ex);
			\draw[->, black, very thick] (2,0) -- (3,0);
			\draw[black, very thick] (3,0) -- (4,0);
			\fill[black] (4,0) circle (1ex);
			\draw[->, black, very thick] (4,0) -- (5,0);
			\draw[black, dashed] (5,0) -- (6,0);
			\draw[->, black, very thick] (3.1,-1) -- (3,-1);
			\draw[thick] (4,0) arc (0:-180:1);
			\draw[->, black, very thick] (2,0) -- (3,1);
			\draw[black, dashed] (3,1) -- (4,2);	
			\end{tikzpicture}
		}
		\subfigure[Matching different dynamics]{	\begin{tikzpicture}[scale=0.7]
			\fill[black] (0,0) circle (1ex);
			\draw[->, black, very thick] (0,0) -- (1,0);
			\draw (1,0) node[above] {$\partial_t -A_1$};
			\draw[black, very thick] (1,0) -- (2,0);
			\fill[black] (2,0) circle (1ex);
			\draw[->, black, very thick] (2,0) -- (3,0);
			\draw (3,0) node[below] {$\partial_t -A_2$};
			\draw[black, very thick] (3,0) -- (4,0);
			\fill[black] (4,0) circle (1ex);
			\draw[->, black, very thick] (4,0) -- (5,0);
			\draw (5,0) node[above] {$\partial_t -A_3$};
			\draw[black, dashed] (5,0) -- (6,0);
			\end{tikzpicture}
		}
\subfigure[First and second order Cauchy problems]{
			\begin{tikzpicture}[baseline = {(0,-.55)}, scale=0.6]
			\fill[black] (0,0) circle (1ex);
			\draw[->, black, very thick] (0,0) -- (1,0);
			\draw[ black, very thick] (1,0) -- (2,0);
			\draw (1,0) node[above=2pt] {$\partial_t-A$};
			\fill[black] (2,0) circle (1ex);
			\draw[->, black, very thick] (2,0) -- (3,0);
			\draw[ black, very thick] (3,0) -- (4,0);
			\fill[black] (2,0) circle (1ex);
			\draw (3,0) node[above=2pt] {$\partial_{t}^2-A$};	
			\fill[black] (4,0) circle (1ex);	
			\end{tikzpicture}
}
		\caption{Evolution equations on graphs}\label{Fig2}
	\end{center}
\end{figure}
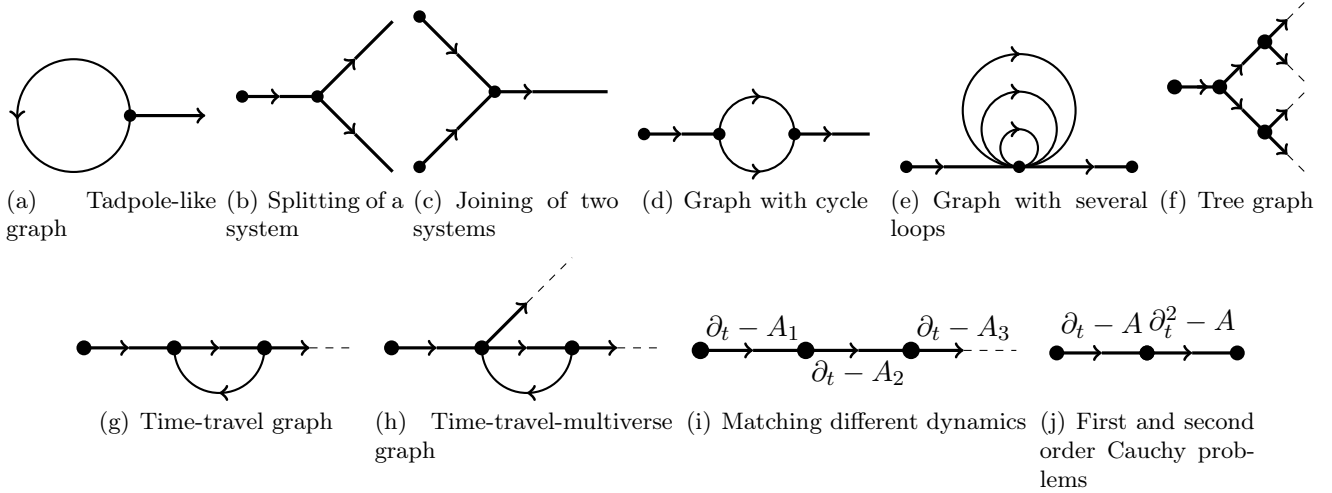

More generally, basic building blocks are the joining and the splitting of two systems -- as depicted in Figure~\ref{Fig2}.(b)-\ref{Fig2}.(c) -- which can be used to describe a system which splits into two non-interacting dynamics or two systems which interact after some time by means of some superposition. 
These blocks can be assembled to form graphs with cycle, see Figure~\ref{Fig2}.(d). Similarly, one may think of the interaction of various periodic systems with dynamics on the time-line, see Figure~\ref{Fig2}.(e) which shares some features with both Figure~\ref{Fig2}.(a) and Figure~\ref{Fig2}.(d).

Time-graph Cauchy problems can be understood as a system of Cauchy problems on intervals with possibly non-local constraints such as periodicity, fixed jump conditions, or certain symmetries.
Since the map $\B=(\B_{ij})_{1\leq i,j\leq n}$ is a block operator matrix with $\B_{ij}\colon X_j \rightarrow X_i$, one can rewrite \eqref{eq:introB} as 
\begin{align*}
\psi_j(0)- \B_{jj}\psi_j(a_j)=g_j +\sum_{i\neq j} \B_{ij}\psi_i(a_i), \quad j=1,\ldots,n,
\end{align*}
that is, a Cauchy problem is assigned on each interval and their `jump conditions' are interdependent. If $\B_{jj}\neq 0$ the Cauchy problem on $(0,a_j)$ is non-local and resembles periodicity, and for $\B_{jj}=0$ the Cauchy problem on $(0,a_j)$ is an initial value problem.

Time-graphs with oriented loops can also be used to model closed loops and other control theoretical gadgets, cf.~\cite{JacZwa12}.
One can think also of signals that after a certain time are processed differently as illustrated in Figure~\ref{Fig2}.(i).
This means that a system changes its character after a certain time. For instance a heat equation is followed after a certain time by a transport process that after a certain time turns again into a heat equation: 
thus modeling time delays in a diffusive process.
Moreover, couplings at the vertices of a time-graph can be frequency dependent, and thus frequency dependent dynamics can be modeled, too. 
Also, there are some more non-standard situations where time-graphs come into play. A tree graph as depicted in Figure~\ref{Fig2}.(f) can serve as an illustration for the multiverse interpretation of quantum mechanics, where it is assumed that, in contrast to a probabilistic interpretation, each possible state is actually attained, but each in one separated universe. Figure~\ref{Fig2}.(g) and \ref{Fig2}.(h) give some possibilities how one may represent time travel -- independent of its actual physical possibility -- using time-graphs, see also Section~\ref{sec:timetravel} below.

Our main result states the well-posedness of such time-graph models, under some compatibility assumption on the matrix $\B$, which encodes the transmission conditions in time, and the `spatial' operators operators $A_i$. In particular, a generalized variation of constants formula is obtained, allowing us to derive additional mapping properties.

The question of whether the time-graph Cauchy problem reduces to a sequence of Cauchy problems on intervals which can be solved iteratively is traced back to the block structure of $\B$, and it is pointed out that loops which are reflected by the transmission conditions $\B$ prevent such iterative solvability and therefore in such situations one indeed needs tools for global solvability such as for the case of periodicity. The methods developed for the case of parabolic problems can be adapted also for some non-parabolic problems such as Schrödinger equations, wave equations, or even coupled dynamics of different types as first and second order Cauchy problems as illustrated in Figure~\ref{Fig2}~(j).

\subsection{Organization of the paper}
In the subsequent Section~\ref{sec:classicalee} we recapitulate key elements of the classical theory of evolution equations, some of which are necessary in order to develop our approach to time-graphs. Thereafter, in Section~\ref{sec:metricgraphs}, the notion of networks and function spaces thereon are made precise. In Section~\ref{sec:der} the Banach space-valued time derivative operator on graphs with couplings and the spatial operator are studied. In Section~\ref{sec:DoreVenni} the time-graph problem for the case $g=0$ is tackled, using the Kalton--Weis sum theorem on commuting operators applied to the time derivative and the spatial operator, where some compatibility assumptions on the boundary conditions are required. 
Section~\ref{sec:Green} follows a more direct approach computing the Green's function for the time-graph problem explicitly. This gives our main result on the solvability of the time-graph Cauchy problem for $g$ in a trace space under less restrictive compatibility conditions. 
Section~\ref{sec:sym} addresses the question 
under which condition solutions to time-graph problems can be reduced to Cauchy problems on intervals.
In Sections~\ref{sec:examples}  we discuss a few examples, focusing on specific instances of time graphs and broaching extensions to classes of non-parabolic evolution equations,  including Schrödinger, wave and mixed-order equations.

Some of the suggested settings may look mostly motivated by science-fictional or hypothetical physical scenarios, as they may allow for loss of causality:  In Section~\ref{sec:timetravel} we discuss these and further related aspects by commenting on tentative interpretations of evolution supported on network-type time structures.

\section{Classical Cauchy problems}\label{sec:classicalee}
Many of the methods applied here make use of classical results on evolution equation theory and initial value problems. 
It is well-established that the initial value problem 
\begin{align}\label{eq:ACP}
\partial_t \psi(t) - A\psi(t)=f(t), \quad \psi(0)=g
\end{align}
with $A$ being a closed linear operator on a Banach space $X$ has for all $g\in X$ a unique mild solution if and only if $A$ generates a $C_0$-semigroup on $X$, cf.\ \cite[Thm.~3.1.12]{AreBatHie10}, where at least
$f\in L^1(0,a;X)$ is admissible,  
$a>0$.
If $X$ is a Hilbert space and $g=0$, the stronger condition of \textit{maximal $L^2$-regularity} amounts to requiring that there is for all $f\in L^2(0,a;X)$ a unique solution $\psi$ of~\eqref{eq:ACP} in the maximal $L^2$-regularity space, i.e.,
\begin{align*}
\psi\in L^2(0,a;D(A))\cap \{\psi\in W^{1,2}(0,a;X)\colon \psi(0)=0\},
\end{align*} 
such that 
\[
\norm{\psi}_{L^2(0,a;D(A))} + \norm{\psi}_{W^{1,2}(0,a;X)} \leq C \norm{f}_{L^{2}(0,a;X)}
\]
for a constant $C>0$ independent of $f$. Maximal $L^2$-regularity holds if and only if the semigroup generated by $A$ on $X$ is analytic. This is related to the notion of sectorial operators: considering sectors in the complex plane
\begin{align*}
\Sigma_\omega := \{\lambda\in  \C\setminus \{0\}\colon |\hbox{arg}(\lambda)|<\omega\}, \quad \omega\in (0,\pi),
\end{align*}
 recall that a closed densely defined linear operator $B$ is \textit{sectorial of angle $\omega\in (0,\pi)$} if 
 \begin{itemize}
 \item $\sigma(B)\subset\overline{\Sigma_\omega}$ and 
 \item $\sup \{ \norm{ \lambda(\lambda-B)^{-1}}\colon \lambda \in \C\setminus \{0\}\colon\nu\leq |\hbox{arg}(\lambda)|\leq \pi \}<\infty$ for all $\nu\in (\theta, \pi)$,
 \end{itemize}
 cf.\ \cite[Theorem 1.11 ff.]{KunWei04}. 
Note that if $B=-A$ is sectorial of angle smaller than $\pi/2$, than $A$ is the generator of an analytic semigroup. In the literature, there are several, slightly diverging  definitions of sectorial operators. For example, in~\cite[Definition 4.1]{EngNag00} it is the generator itself (rather than its negative) which is called ``sectorial''; while in~\cite[Chapter 3, \S 3.10]{Kat80} an operator on a Hilbert spaces is called ``sectorial'' if its numerical range lie in a sector.

For Banach spaces $X$ of class UMD, maximal $L^p$-regularity can be characterized using the notions of $\cR$-sectoriality and $\cH^{\infty}$-calculus, where one implication follows from the Dore--Venni-type sum theorem of Kalton and Weis on commuting operators~\cite[Thm.~6.3]{KalWei01}, cited here in Theorem~\ref{thm:DoreVenni} below. 
The key idea in the original Dore--Venni Theorem and its generalizations is to look at evolution equations  on a Banach space $X$ as stationary equations on a Bochner space of $X$-valued functions. 

\begin{theorem}[Sum theorem of Kalton and Weis]\label{thm:DoreVenni}
Suppose that $A\in \cH^{\infty}(X)$ and $B\in \cR\cS(X)$ are commuting operators such that $\phi_A^{\infty}+\phi_B^R<\pi$. Then $A+B$ is closed with domain $D(A+B)=D(A)\cap D(B)$, $A+B\in \cR\cS(X)$ with $\phi_{A+B}\leq \max\{\phi_A^{\infty},\phi_B^R\}$, and for some constant $C>0$
	\begin{align*}
	\norm{Ax}_X +\norm{Bx}_X \leq C\norm{(A+B)x}_X, \quad x\in D(A)\cap D(B).
	\end{align*} 
	The operator $A+B$ is invertible if $A$ or $B$ is invertible.
\end{theorem}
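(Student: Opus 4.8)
The statement is the Kalton--Weis sum theorem, so the plan is to reproduce the functional-calculus proof, reducing every assertion to the boundedness of a single contour integral that is controlled by the operator-valued $\cH^{\infty}$-calculus. After a shift or an approximation I would first arrange that $A$ is invertible, so that $\sigma(A)$ and $-\sigma(B)$ are separated both in angle and away from $0$. Mimicking the scalar identity $\frac{1}{a+b}=\frac{1}{2\pi i}\int_\Gamma \frac{dz}{(z-a)(z+b)}$, I would define
\[
S:=\frac{1}{2\pi i}\int_\Gamma (z-A)^{-1}(z+B)^{-1}\,dz,
\]
where $\Gamma$ is a sectorial contour enclosing $\sigma(A)\subset\overline{\Sigma_{\phi_A}}$ and leaving $-\sigma(B)$ outside. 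The angle hypothesis $\phi_A^{\infty}+\phi_B^R<\pi$ (a fortiori $\phi_A+\phi_B<\pi$ for the plain sectoriality angles) guarantees that such a contour exists, and the decay $\norm{(z-A)^{-1}},\norm{(z+B)^{-1}}=O(|z|^{-1})$ makes the Bochner integral absolutely convergent. Using commutativity of the resolvents throughout, a resolvent-identity computation shows that $S$ maps $X$ into $D(A)\cap D(B)$ and that $(A+B)Sx=x$ and $S(A+B)u=u$ for $u\in D(A)\cap D(B)$, so that $S=(A+B)^{-1}$ once $A+B$ is known to be closed.

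The heart of the matter is to bound $AS$ and $BS$. Writing
\[
BS=\frac{1}{2\pi i}\int_\Gamma (z-A)^{-1}\,B(z+B)^{-1}\,dz,
\]
I would read the right-hand side as the operator-valued $\cH^{\infty}$-functional calculus $\Phi(A)$ of $A$ applied to the $\mathcal{L}(X)$-valued symbol $\Phi(z)=B(z+B)^{-1}=1-z(z+B)^{-1}$. This symbol is holomorphic and bounded on a sector $\Sigma_\sigma$ with $\sigma=\pi-\phi_B^R-\varepsilon$, and the angle hypothesis gives $\sigma>\phi_A^{\infty}$, so that the calculus of $A$ acts on it. The crucial point --- the genuine content of the theorem --- is that $\cR$-sectoriality of $B$ renders the family $\{z(z+B)^{-1}:z\in\Sigma_\sigma\}$, hence $\{\Phi(z):z\in\Sigma_\sigma\}$, $\cR$-bounded; invoking the operator-valued $\cH^{\infty}$-calculus theorem (the abstract multiplier result underlying the whole construction) then yields boundedness of $\Phi(A)=BS$ with $\norm{BS}\leq C\,\cR(\{\Phi(z)\})$. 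Consequently $AS=1-BS$ is bounded too. Evaluating at $x=(A+B)u$ with $u\in D(A)\cap D(B)$ gives the a priori estimate $\norm{Au}_X+\norm{Bu}_X\leq C\norm{(A+B)u}_X$.

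From here the remaining claims fall out. Closedness of $A+B$ on $D(A)\cap D(B)$ follows from the a priori estimate together with the closedness of $A$ and of $B$: if $u_n\to u$ and $(A+B)u_n\to y$, the estimate forces $Au_n$ and $Bu_n$ to be Cauchy, and closedness of $A$ and $B$ identifies their limits as $Au$ and $Bu$, whence $u\in D(A)\cap D(B)$ and $(A+B)u=y$. Invertibility when $A$ (or $B$) is invertible is precisely the statement $S=(A+B)^{-1}\in\mathcal{L}(X)$ from the construction, since the separation of $\Gamma$ from $0$ used above is exactly what the invertibility hypothesis supplies. Finally, for $A+B\in\cR\cS(X)$ with $\phi_{A+B}\leq\max\{\phi_A^{\infty},\phi_B^R\}$ I would rerun the argument with $A$ replaced by $A+\lambda$ for $\lambda\in\Sigma_{\pi-\max\{\phi_A^{\infty},\phi_B^R\}}$, checking that all constants are uniform in $\lambda$ and that the resulting family $\{\lambda(\lambda+A+B)^{-1}\}$ inherits $\cR$-boundedness from the $\cR$-bounds attached to $A$ and $B$.

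The main obstacle is the operator-valued $\cH^{\infty}$-functional calculus theorem itself: upgrading the scalar bounded $\cH^{\infty}$-calculus of $A$ to a bounded calculus for the $\mathcal{L}(X)$-valued symbol $\Phi$, with the norm governed by the $\cR$-bound of $\{\Phi(z)\}$ rather than by its sup-norm. This is where $\cR$-boundedness is indispensable --- mere norm boundedness of the symbol does not suffice --- and where the commutation of $A$ with the values $\Phi(z)$ must be exploited carefully. The rest of the work is routine but must still be carried out: justifying the contour deformations and the Fubini step that identify $BS$ with $\Phi(A)$, controlling convergence of the integrals near $0$ and $\infty$, and removing the invertibility assumption by the approximation $A\mapsto A+\epsilon$ followed by $\epsilon\downarrow 0$.
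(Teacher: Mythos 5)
The paper does not prove this statement: it is quoted verbatim as a known result, with an explicit pointer to Kalton--Weis \cite[Thm.~6.3]{KalWei01}, so there is no in-paper argument to compare against. Your sketch correctly reproduces the standard proof from that source --- the contour-integral candidate for $(A+B)^{-1}$, the identification of $BS$ with the operator-valued functional calculus of $A$ applied to $z\mapsto B(z+B)^{-1}$, and the crucial use of the $\cR$-bound of $\{z(z+B)^{-1}\}$ together with the operator-valued $\cH^{\infty}$-calculus theorem, with the remaining claims (closedness, the a priori estimate, invertibility, $\cR$-sectoriality of the sum) derived as you indicate. The steps you flag as needing care (convergence of the integral near $0$ and $\infty$, removal of the invertibility assumption by $A\mapsto A+\epsilon$, and uniformity of the $\cR$-bounds when establishing $\phi_{A+B}\leq\max\{\phi_A^{\infty},\phi_B^R\}$) are exactly the technical points handled in the cited reference; nothing in your outline is off track.
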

In the following we will seldom use this result in its full generality, as we mostly restrict to the case of Hilbert spaces; we refer the interested reader to the classic monograph~\cite{DenHiePru03} by Denk, Hieber and Pr\"u\ss  where all these notions are introduced.
Theorem~\ref{thm:DoreVenni} is formulated for a Banach space $X$.  If $X$ is a Hilbert space, however, then the notions of $\cR$-sectoriality and sectoriality agree. 
We recall that whenever $-A$ is sectorial, the solution $\psi(t):=e^{tA}g$ lies in $D(A)$ for  all $t>0$ and all initial data $g\in X$; and that moreover $\psi$ lies for all $p\in (1,\infty)$ in the maximal $L^p$-regularity space whenever the initial data belong to the \textit{trace space}, i.e., 
$g\in (X,D(A))_{1-1/p,p}$, 
given by the real interpolation functor 
$(\cdot,\cdot)_{\theta,p}$, cf.\ \cite[\S~3.4]{PruessSimonett}.

The \textit{Ansatz} using the Kalton and Weis sum theorem has been applied successfully by Arendt and Bu in~\cite{AreBu02,AreBu04,AreBu04b}. 
In particular, 
the fact that both time domains $\R$ and $\mathbb S^1$ are groups has allowed them to apply methods of harmonic analysis and to deliver a comprehensive theory of Cauchy problems with \textit{time-periodic} boundary conditions. 
A general scheme for  periodic and almost periodic solutions to semilinear equations has been proposed by Hieber and co-authors, cf.\ \cite{Hieber2016_per, Hieber2017_per} and also \cite{Hieber2019_per, Hieber2017b_per, HieberStinner, HieberGaldi}, where in particular for applications in fluid mechanics semigroup theory plays an important role, cf. \cite{Hieber2020}. 
For a similar approach where the stationary part is treated separately,
and in particular applications to quasi- and semi-linear problems see also the works of Kyed and co-authors, cf.\ \cite{KyeSau17, EitKye17, CelKye18}.  
 Existence of time-periodic solutions for (linear or even nonlinear) hyperbolic equations is well-known for a large class of problems, cf.\ the comprehensive monograph~\cite{Vej82}.

\section{Finite metric graphs}\label{sec:metricgraphs}

\subsection{Finite graphs}\label{sec:finitegraphs}
A \emph{graph} is a $4$-tuple 
$$\Ge = \left( \verts, \Ie,\Ee, \partial \right),$$ 
where $\verts$ denotes the set of \textit{vertices}, 
$\Ie$ the set of \textit{internal edges} 
and $\Ee$ the set of \textit{external edges}, with $\Ee\cap \Ie=\emptyset$. We refer to elements of the set $\Ee \cup \Ie$ collectively as \textit{edges}. To avoid notational ambiguities, we also assume $\verts\cap\Ee=\verts\cap \Ie=\emptyset$.
In order to fix an orientation, one distinguishes \textit{incoming} $\Ee_-$ and \textit{outgoing} $\Ee_+$ external edges, where $\Ee=\Ee_-\cup \Ee_+$ and $\Ee_-\cap \Ee_+=\emptyset$. 

The structure of the graph is given by the \textit{boundary map} $\partial$. On one hand, it assigns to each internal edge $i\in \Ie$ 
an ordered pair of vertices $\partial (i)=\left(\partial_-(i),\partial_+(i)\right)\in \verts \times \verts$, 
where $\partial_-(i)$ is called its \textit{initial vertex} 
and $\partial_+(i)$ its \textit{terminal vertex}. 
On the other hand, each incoming external edge $e_-\in \Ee_-$ and each outgoing external edge $e_+\in \Ee_+$ is associated by means of $\partial(e_-)=\partial_-(e_-)$ and $\partial(e_+)=\partial_+(e_+)$ with a single vertex (its initial and terminal vertex, respectively). A graph is called \textit{balanced} if $|\Ee_-|=|\Ee_+|$. 
We will see that orientations do play a role only when we study evolution equations that are of first (or, more generally, odd) order in time; in the case of even time order equations, orientations are only imposed for the sake of a consistent parametrization. 
A graph is called \textit{finite} if $\abs{\verts}+\abs{\Ie}+\abs{\Ee}<\infty$ 
	and a finite graph is called \textit{compact} if $\Ee=\emptyset$.

The structure of the network is given by the $|\verts|\times |\Ee\cup \Ie|$-\emph{outgoing} and \emph{ingoing incidence matrices}
$I^+:=(\iota^+_{\mv \me})$ and $I^-:=(\iota^-_{\mv \me})$ defined by
\begin{equation}\label{inout}
	\iota^+_{\mv \me}:=\left\{
	\begin{array}{rl}
		1, & \hbox{if } \partial_-(\me)=\mv,\\
		0, & \hbox{otherwise},
	\end{array}
	\right.
	\qquad\hbox{and}\qquad
	\iota^-_{\mv \me}:=\left\{
	\begin{array}{rl}
		1, & \hbox{if } \partial_+(\me)=\mv,\\
		0, & \hbox{otherwise.}
	\end{array}
	\right.
\end{equation}
This encodes the structure of the graph and allows one to define directions on $\Ge$. The network $\Ge$ is the directed graph whose signed incidence matrix is $I:=(\iota_{\mv \me})$, defined by $I:=I^+-I^-$; we will occasionally need the \textit{underlying undirected graph}, which is fully defined by the (signless) incidence matrix $I:=I^+ +I^-$. Roughly speaking, a \textit{directed graph} 
is the version of the graph where one can move only along the prescribed direction, while for the \textit{undirected graph} $\Ge$ one can move into both directions.

\subsection{Function spaces on metric graphs}
A graph $\Ge$ is endowed with the following metric structure. 
Each internal edge $i\in \Ie$ is associated 
with an interval $[0,a_i]$, with $a_i>0$, 
such that its initial vertex corresponds to~$0$ 
and its terminal vertex to~$a_i$. 
Each external edge $e\in \Ee_-$ and $e\in \Ee_+$ is associated to a half line $[0,\infty)$ and $(-\infty,0]$, respectively,
such that $\partial(e)$ corresponds to~$0$. 
The numbers~$a_i$ are called \textit{lengths} of the internal edges $i\in \Ie$ and they are collected into the vector 
\begin{align*}
\au=\{a_i\}_{i\in \Ie}\in (0,\infty)^{\abs{\Ie}}.
\end{align*} 
The couple consisting of a finite graph endowed with a metric structure is called a \textit{metric graph} $(\Ge,\au)$. 
The metric on the undirected metric graph $(\Ge,\au)$ is defined via minimal path lengths along connected vertices, while for the directed metric graph minimal path lengths along connected vertices is computed taking into account the directions.

Let, for each $j\in \Ie \cup \Ee$, $X_j$ be  a complex Banach space with norm $\norm{\cdot}_{X_j}$. Then any collection of functions 
\begin{align*}
\psi_j\colon I_j \rightarrow X_j, \quad j\in \Ie \cup \Ee \hbox{ with } I_j= \begin{cases} (0,a_j), & \mbox{if} \ j\in \Ie, \\ 
(0,\infty), &\mbox{if} \ j\in \Ee_-, \\ 
(-\infty,0), &\mbox{if} \ j\in \Ee_+,
\end{cases} 
\end{align*}
can be identified with a map
\begin{align}\label{eq:mapsongraphs}
\psi\colon \bigsqcup_{j\in \Ie \cup \Ee } I_j \rightarrow \bigsqcup_{j\in \Ie \cup \Ee } X_j \quad \hbox{with} \quad \psi(t)=\psi_j(t) \quad \hbox{for } t\in I_j,
\end{align}
where the notation for elements in
\begin{align*}
t_j=(t,j)\in\bigsqcup_{j\in \Ie \cup \Ee } I_j \quad  \hbox{and} \quad  \psi_j=(\psi,j)\in \bigsqcup_{j\in \Ie \cup \Ee } X_j
\end{align*}
 is 
shortened to $t$ and $\psi$, and
occasionally we write 
slightly redundantly $\psi_j(t)=\psi_j(t_j)$. 
The metric graph $(\Ge,\au)$ is identified with a quotient of $\bigsqcup_{j\in \Ie \cup \Ee } \overline{I_j}$, and therefore $t\in(\Ge,\au)$ is identified with $t=t_j\in \overline{I_j}$ for some $j\in \Ee\cup \Ie$.
Similarly,  the maps $\psi$ defined as in \eqref{eq:mapsongraphs} can be identified with maps on $(\Ge,\au)$,
 where on the vertices  in general a set of values can be attained,   because for the different edges adjacent to a vertex the edgewise defined functions $\psi_j$ can in general take different values.

Equipping each edge of the oriented or non-oriented metric graph with
the one-dimensional vector-valued Bochner--Lebesgue measure, 
one obtains a measure space. One defines 
\begin{equation*} 
\int_{\Ge} \psi := \sum_{j\in \Ie \cup \Ee} \int_{I_j} \psi(t_j) \, dt_j 
\end{equation*}
where $dt_{j}$ 
refers to integration with respect to the Bochner--Lebesgue measure on $I_j$.  
We set
\begin{align*}
\cX:= \bigoplus_{j\in\Ie \cup \Ee} X_j,
\end{align*}
and introduce, with a slight abuse of notation, several related spaces:
For $p\in (1,\infty)$ the space 
\begin{align*}
L^p(\Ge,\au;\cX):= \bigoplus_{j\in\Ie \cup \Ee} L^p(I_j;X_j) 
\end{align*}
defines a Banach space, and indeed a Hilbert space provided $p=2$ and  $X_j$ are Hilbert spaces; the canonical norm and inner product are given by
\begin{align*}
\norm{\psi}_{L^p}:=
\left(\sum_{j\in \Ie \cup \Ee} \int_{I_j} \norm{\psi(t_j)}_{X_j}^p   \, dt_j\right)^{1/p}
 \quad \hbox{and}
  \quad \langle \psi, \varphi\rangle_{L^2}= \sum_{j\in \Ie \cup \Ee} \int_{I_j} \langle \psi_j(t_j), \varphi_j(t_j) \rangle_{X_j} \, dt_j,
\end{align*}
respectively.
The corresponding Sobolev spaces are defined for $p\in [1,\infty)$ and $m \in \N$ by
\begin{align*}
W^{m,p}(\Ge,\au;\cX):= \bigoplus_{j\in\Ie\cup \Ee} W^{m,p}(I_j;X_j). 
\end{align*}
Recall that for $\psi\in W^{m,p}(\Ge,\au;\cX)$, $m\in \N$, $p\in [1,\infty)$ traces up to the order $m-1$ are well-defined, i.e., $$\psi^{(n)}(\partial_{\pm}(j)) \in X_j, \quad j\in \Ie\cup \Ee_{\pm}, \hbox{ where } 0\leq n\leq m-1.$$
Also, using $$W_0^{m,p}(I_j;X_j)= \{\psi_j\in W^{m,p}(I_j;X_j) \colon \psi_j^{(n)}\vert_{\partial I_j}=0, \quad 0\leq n\leq m-1 \}$$
one sets 
\begin{align*}
W^{m,p}_0(\Ge,\au;\cX):=\bigoplus_{j\in\Ie\cup \Ee} W_0^{m,p}(I_j;X_j).
\end{align*}

\section{Operators on metric graphs}\label{sec:der}

As a first step to study the motivating problem, i.e.,
\[
\partial_t \psi(t) -A(t)\psi(t)=f(t), \qquad t\in (\Ge,\au),
\]
the derivative operator with transmission conditions on graphs is analyzed.

\subsection{Derivative operators on graphs}
One considers the $n$-th derivative operators $D_n$ on graphs formally given by 
\begin{align*}
(D_n\psi)_j = \partial_t^n \psi_j, \quad j\in \Ie\cup \Ee,
\end{align*}
where one can define minimal and maximal operators in $L^{p}(\Ge,\au;\cX)$ by
\begin{align*}
D(D_n^{\min}):=W_0^{n,p}(\Ge,\au;\cX) \subset D(D_n^{\max}):=W^{n,p}(\Ge,\au;\cX).
\end{align*}
These are closed linear operators. If $p=2$ and each $X_j$ is a Hilbert space, one has
$(D_n^{\min})^* = (-1)^{n}D_n^{\max}$,
and hence $D_n^{\min}$ is symmetric if $n$ is even and skew-symmetric if $n$ is odd. In this article, the focus lies on the first and second derivative operator, for which we use the notation
\begin{align*}
D_t:=D_1 \quad \hbox{and} \quad D_{tt}:=D_2.
\end{align*}

\subsection{Accretive coupling conditions for the first derivative}
When considering the first derivative operator, it is assumed that $\Ge$ is balanced, i.e., there are as many outgoing as incoming external edges.
From now on, let $X_j$ be Hilbert spaces. On $L^2(\Ge,\au;\cX)$ a class of $m$-accretive realizations of $D_t$ defined by boundary conditions is studied,
i.e., we consider operators $D_t^{b.c}$ with 
$$
D_t^{\min} \subset D_t^{b.c} \subset D_t^{\max},$$
where $\rho(D_t^{b.c})\neq \emptyset$, and
\begin{align*}
\Real \langle D_t^{b.c}\psi,\psi \rangle \geq 0, \quad \hbox{for all } \psi\in D(D_t^{b.c}).
\end{align*} 
Integrating by parts yields the following Lagrange identity for the first derivative operator 
\begin{equation}\label{intbp}
\int_{\Ge}\langle \psi',\varphi\rangle_X + \int_{\Ge}\langle \psi, \varphi' \rangle_X 
= [ \psi,\varphi]_{\partial\Ge, D_t}, \quad \psi,\varphi\in D(D_t^{\max}),
\end{equation}
where 
\begin{align*}
	\left[\psi,\varphi\right]_{\partial \Ge, D_t}:=
	\sum_{e\in \Ie\cup \Ee_+}\langle\psi(\partial_+(e)),\varphi(\partial_+(e)
\rangle_X-	
	\sum_{e\in \Ie\cup \Ee_-}\langle\psi(\partial_-(e)),\varphi(\partial_-(e)	\rangle_X.
\end{align*}
One introduces the space of boundary values
\begin{align*}
\Ke:=
\bigoplus_{i\in \Ie} X_i \oplus \bigoplus_{e\in \Ee_-} X_e
 \simeq 
 \bigoplus_{i\in \Ie} X_i \oplus \bigoplus_{e\in \Ee_+} X_e,
\end{align*}
where the claimed isomorphism holds because the graph is balanced, i.e., $|\Ee_+|=|\Ee_-|$: the vectors of boundary values $\underline{\psi}_-\in \Ke$ and $\underline{\psi}_+\in \Ke$ are then defined  by
\begin{align}\label{eq:RandwerteD1_1}
\underline{\psi}_+:=
\begin{bmatrix}\{ \psi(\partial_+(i))\}_{i\in \Ie} \\
\{ \psi(\partial_+(e))\}_{e\in \Ee_p}
\end{bmatrix}, \quad 
\underline{\psi}_-:=\begin{bmatrix}\{\psi(\partial_-(i))\}_{i\in \Ie} \\
\{\psi(\partial_-(e))\}_{e\in \Ee_p}
\end{bmatrix}
\quad \hbox{and} \quad [\psi]:= \begin{bmatrix}\underline{\psi}_+ \\ \underline{\psi}_-\end{bmatrix}\in \Ke^2,
\end{align}
where for a fixed bijection 
\begin{align*}
p\colon \Ee_+\rightarrow \Ee_- \quad \hbox{one sets}\quad \Ee_p=\{(e_+,p(e_+))\in \Ee_+\times \Ee_-\colon e_+\in \Ee_+\},
\end{align*}
i.e., one orders the outgoing and incoming edges into pairs, and defines
\begin{align*}
\partial_+(e):=\partial_+(e_+), \quad \partial_-(e):=\partial_-(p(e_+)), \hbox{ where } e=(e_+,p(e_+))\in \Ee_p.
\end{align*}
Hence, one obtains
\begin{align}\label{eq:boundaryterms}
\left[\psi,\varphi\right]_{\partial \Ge, D_t}= \langle\underline{\psi}_+,\underline{\varphi}_+ \rangle_{\Ke}
-\langle\underline{\psi}_-,\underline{\varphi}_- \rangle_{\Ke} = \langle \underline{\psi}, J \underline{\varphi}\rangle_{\Ke^2}, \quad \hbox{where } J=\begin{bmatrix}
\mathds{1}_{\Ke} & 0 \\ 0 & -\mathds{1}_{\Ke} 
\end{bmatrix}.
\end{align}
For any  subspace $\Me\subset \Ke^2$ one can define a realization by
\begin{align*}
D_t(\Me) \psi :=D_t^{\max}\psi = \psi', \quad
D(D_t(\Me)):=\left\{ \psi\in D(D_t^{\max})\colon \underline{\psi}\in \Me \right\}
\end{align*}
with the extremal cases $D_t(\Ke^2)=D_t^{\max}$ and $D_t(\{0\})=D_t^{\min}$ 
being clearly edge-wise decoupled, and couplings can be implemented by means of boundary conditions.
\begin{lemma}\label{lem:closedDt}
The operator $D_t(\Me)$ is closed if and only if $\Me\subset \Ke^2$ is closed.
\end{lemma}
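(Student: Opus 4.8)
The plan is to realise $D_t(\Me)$ as the restriction of the \emph{closed} operator $D_t^{\max}$ to the preimage of $\Me$ under the boundary-value map, and to reduce the closedness of this restriction to a purely topological statement about that preimage. Concretely, I would equip $D(D_t^{\max})=W^{1,2}(\Ge,\au;\cX)$ with the graph norm $\norm{\psi}_{L^2}+\norm{\psi'}_{L^2}$, which is equivalent to the Sobolev norm and makes $D(D_t^{\max})$ a Banach space precisely because $D_t^{\max}$ is closed. Writing $\gamma\colon D(D_t^{\max})\to\Ke^2$, $\gamma\psi:=[\psi]$, for the boundary-value map of \eqref{eq:RandwerteD1_1}, the one-dimensional (Bochner) trace embedding $W^{1,2}(I_j;X_j)\hookrightarrow C(\overline{I_j};X_j)$ shows that $\gamma$ is bounded; moreover $\gamma$ is surjective, since prescribed boundary data are realised edgewise, by affine interpolation on the internal edges $[0,a_i]$ and by an exponentially decaying representative (e.g.\ $t\mapsto v e^{-t}$) on each external half-line. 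By construction $D(D_t(\Me))=\gamma^{-1}(\Me)$ and $D_t(\Me)=D_t^{\max}\vert_{\gamma^{-1}(\Me)}$.

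Next I would invoke the standard fact that a restriction of the closed operator $D_t^{\max}$ to a subspace $D\subset D(D_t^{\max})$ is closed if and only if $D$ is closed in the graph norm: indeed $\psi\mapsto(\psi,D_t^{\max}\psi)$ is an isometry of $(D(D_t^{\max}),\norm{\cdot}_{L^2}+\norm{\cdot'}_{L^2})$ onto the graph of $D_t^{\max}$, which is closed in $L^2(\Ge,\au;\cX)\times L^2(\Ge,\au;\cX)$, so the graph of the restriction is closed if and only if $\gamma^{-1}(\Me)$ is closed in the graph norm. The lemma thereby reduces to the equivalence: $\gamma^{-1}(\Me)$ is closed in $D(D_t^{\max})$ if and only if $\Me$ is closed in $\Ke^2$. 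The implication ``$\Me$ closed $\Rightarrow\gamma^{-1}(\Me)$ closed'' is immediate from the continuity of $\gamma$, and yields the ``if'' part of the statement.

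The converse is the only genuine point, and here the surjectivity of $\gamma$ is essential. Since $\Me$ is a subspace, $\ker\gamma=\gamma^{-1}(\{0\})\subset N:=\gamma^{-1}(\Me)$, so $N$ is a closed subspace saturated for the quotient map $\pi\colon D(D_t^{\max})\to D(D_t^{\max})/\ker\gamma$; hence $\pi(N)=N/\ker\gamma$ is closed. The bounded surjection $\gamma$ induces a bounded linear \emph{bijection} $\overline{\gamma}$ of $D(D_t^{\max})/\ker\gamma$ onto $\Ke^2$, which by the open mapping theorem is a topological isomorphism, so $\Me=\gamma(N)=\overline{\gamma}(\pi(N))$ is closed. (Equivalently and more concretely, one uses the open-mapping estimate that every $\kappa\in\Ke^2$ admits a preimage $\psi$ with $\norm{\psi}_{L^2}+\norm{\psi'}_{L^2}\le C\norm{\kappa}_{\Ke^2}$ to lift a sequence $m_k\to m$ in $\Me$: passing to a subsequence with $\sum_k\norm{m_{k+1}-m_k}_{\Ke^2}<\infty$, one lifts the increments and telescopes to a graph-norm Cauchy sequence in $N$ whose limit lies in the closed set $N$ and maps under $\gamma$ to $m$, so $m\in\Me$.) I expect the main obstacle to be exactly this converse: one cannot conclude that the image of a closed set under a merely continuous map is closed, so the argument must exploit surjectivity through the open mapping theorem, together with the fact that $\gamma^{-1}(\Me)\supset\ker\gamma$.
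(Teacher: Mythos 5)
Your proof is correct. The ``if'' direction coincides with the paper's: continuity of the boundary-value map on $D(D_t^{\max})$ with the graph norm, plus closedness of $D_t^{\max}$. For the ``only if'' direction the two arguments genuinely diverge. The paper argues by contraposition and constructs an explicit bounded lifting of boundary data, namely $\psi_{j}=\eta_j^{+}(\underline{\psi}_{+})_j+\eta_j^{-}(\underline{\psi}_{-})_j$ with edgewise cut-off functions $\eta_j^{\pm}$; this is in effect a bounded linear right inverse of your map $\gamma$, so a non-convergent-in-$\Me$ Cauchy sequence of boundary values lifts directly to a graph-norm Cauchy sequence in $D(D_t(\Me))$ whose limit leaves the domain. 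You instead argue directly: $\gamma^{-1}(\Me)$ is graph-norm closed and contains $\ker\gamma$, so its image under the quotient map is closed, and the induced bijection onto $\Ke^2$ is a topological isomorphism by the open mapping theorem, whence $\Me$ is closed. Your route needs only set-theoretic surjectivity of $\gamma$ (which you still verify by an explicit construction, so little is saved there) but is structurally cleaner and transfers verbatim to any closed operator equipped with a bounded surjective boundary map; the paper's route is more elementary, avoiding the open mapping theorem entirely at the price of exhibiting the lifting by hand. Both hinge on exactly the same two analytic inputs -- boundedness and surjectivity of the trace map on $W^{1,2}(\Ge,\au;\cX)$ -- packaged differently.
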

\begin{proof}
	If $\Me\subset \Ke^2$ is closed, then $\psi_n\to \psi$ and $\psi_n'\to \varphi$ in $L^2(\Ge,\au;\cX)$ for $\psi_n\in D(D_t(\Me))$ imply first due to the closedness of $D_t^{\max}$ that $\psi\in D(D_t^{\max})$ and $\varphi=\psi'$. Second, due to the boundedness of the trace operator one has in $\Ke^2$ that $\underline{\psi_n}\to\underline{\psi}\in\Me$.
	
	If $\Me\subset \Ke^2$ is not closed, then there exist a Cauchy sequence $(\underline{\psi_n})\subset \Me$ with $\underline{\psi_n}\to\underline{\psi}\notin\Me$. Note that there exist smooth cut-off functions $\eta_{j}^{\pm}\colon I_j \rightarrow [0,1]$ with $\eta_{j}^{\pm}=1$ close to $\partial_{\pm}(e_j)$ and zero around $\partial_{\mp}(e_j)$. Then $\psi_{n,j}=\eta_j^{+} (\underline{\psi}_{+})_j +\eta_j^{-} (\underline{\psi}_{-})_j$ defines  functions $\psi_{n}\in D(D_t(\Me))$ with $\psi_{n}\to \psi$ and $\psi_{n}'\to \psi'$, but $\psi\notin D(D_t(\Me))$.
\end{proof}
Here, the following type of boundary conditions is considered.
Let $\B\in \mathcal{L}(\Ke)$  be a bounded operator on $\Ke$.
Note that $\B$ is a block operator matrix given with respect to the decomposition of $\Ke:=
\bigoplus_{i\in \Ie\cup \Ee_-} X_i$,
 i.e.,
\begin{align*}
\B = (\B_{ij})_{i,j\in \Ie \cup \Ee_-} \quad \hbox{with} \quad \B_{ij}\in \cL(X_j,X_i).
\end{align*}
For such $\B\in \mathcal{L}(\Ke)$ we consider the boundary conditions defined by 
\begin{equation}\label{eq:bcD1}
\underline{\psi}_- = \B\underline{\psi}_+.
\end{equation}
One defines the operator 
\begin{align*}
D_t(\B):=D_t(\Me(\B)), \quad \hbox{where } \Me(\B) := 
\{[\psi]\in \Ke^2\colon \underline{\psi}_- - \B\underline{\psi}_+=0\}.
\end{align*}
Under additional assumptions these boundary conditions force the numerical range of $D_t(\B)$ and $D_t(\B)^*$ to lie in a left half-plain of the complex plain.

 \begin{lemma}[Adjoint operator and numerical range]\label{lem:adjoint}
Let $\B\in \mathcal{L}(\Ke)$. Then $D_t(\B)$ is closed, its Hilbert space adjoint in $L^2(\Ge,\au;\cX)$ is given by	
\begin{align*}
D_t(\B)^*\varphi = -\varphi', \quad
 	D(D_t(\B)^*)=\{\varphi \in D(D_t^{\max})\colon \B^*\underline{\varphi}_- - \underline{\varphi}_+=0 \},
\end{align*}
 and furthermore
 	\begin{align*}
 	\Real\langle D_t(\B)\ \psi,\psi \rangle&= 
 	\tfrac{1}{2}\langle (\mathds{1} -\B^*\B)\underline{\psi}_+, \underline{\psi}_+\rangle_{\Ke}, \quad \psi \in D(D_t(\B)),\\
 	\Real\langle D_t(\B)^*\varphi,\varphi \rangle&= 
 	\tfrac{1}{2}\langle (\mathds{1}-\B\B^*)\underline{\varphi}_-, \underline{\varphi}_-\rangle_{\Ke}, \quad \varphi \in D(D_t(\B)^*).
 	\end{align*}
 \end{lemma}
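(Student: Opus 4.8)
The plan is to dispatch the three assertions — closedness, the explicit adjoint, and the two numerical-range identities — in turn, relying throughout on the Lagrange identity \eqref{intbp} together with its boundary-form expression \eqref{eq:boundaryterms}. For closedness I would simply note that the linear map $[\psi]=(\underline{\psi}_+,\underline{\psi}_-)\mapsto \underline{\psi}_- - \B\underline{\psi}_+$ is continuous on $\Ke^2$ because $\B\in\mathcal{L}(\Ke)$ is bounded, so $\Me(\B)$ is the kernel of a bounded operator and hence a closed subspace of $\Ke^2$. Lemma~\ref{lem:closedDt} then yields closedness of $D_t(\B)=D_t(\Me(\B))$ at once.

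For the adjoint I would start from the chain $D_t^{\min}\subset D_t(\B)\subset D_t^{\max}$, which upon taking adjoints reverses to $(D_t^{\max})^*\subset D_t(\B)^*\subset (D_t^{\min})^*=-D_t^{\max}$, using the stated duality $(D_t^{\min})^*=-D_t^{\max}$. This already identifies the action: every $\varphi\in D(D_t(\B)^*)$ lies in $D(D_t^{\max})$ and satisfies $D_t(\B)^*\varphi=-\varphi'$. To pin down the domain I would feed the defining relation $\langle \psi',\varphi\rangle=\langle\psi,-\varphi'\rangle$ into \eqref{intbp}: for $\varphi\in D(D_t^{\max})$ the membership $\varphi\in D(D_t(\B)^*)$ is equivalent to the vanishing of the boundary form $\langle\underline{\psi}_+,\underline{\varphi}_+\rangle_{\Ke}-\langle\underline{\psi}_-,\underline{\varphi}_-\rangle_{\Ke}$ for every $\psi\in D(D_t(\B))$. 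Substituting the constraint $\underline{\psi}_-=\B\underline{\psi}_+$ and moving $\B$ across the inner product, this reduces to $\langle\underline{\psi}_+,\underline{\varphi}_+-\B^*\underline{\varphi}_-\rangle_{\Ke}=0$ for all admissible $\psi$, which forces $\B^*\underline{\varphi}_--\underline{\varphi}_+=0$, the claimed boundary condition.

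For the numerical-range identities I would insert $\varphi=\psi$ into \eqref{intbp}--\eqref{eq:boundaryterms}: the left-hand side collapses to $2\Real\langle D_t(\B)\psi,\psi\rangle$, while the right-hand side is $\norm{\underline{\psi}_+}_{\Ke}^2-\norm{\underline{\psi}_-}_{\Ke}^2$, and rewriting $\norm{\underline{\psi}_-}_{\Ke}^2=\langle\B^*\B\underline{\psi}_+,\underline{\psi}_+\rangle_{\Ke}$ via $\underline{\psi}_-=\B\underline{\psi}_+$ gives the first formula. The second follows identically from the adjoint: with $D_t(\B)^*\varphi=-\varphi'$ and the constraint $\underline{\varphi}_+=\B^*\underline{\varphi}_-$ one gets $2\Real\langle D_t(\B)^*\varphi,\varphi\rangle=\norm{\underline{\varphi}_-}_{\Ke}^2-\norm{\underline{\varphi}_+}_{\Ke}^2=\langle(\mathds{1}-\B\B^*)\underline{\varphi}_-,\underline{\varphi}_-\rangle_{\Ke}$.

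The only genuinely non-formal point — and the step I expect to require care — is the surjectivity implicit in the adjoint computation: to conclude $\underline{\varphi}_+-\B^*\underline{\varphi}_-=0$ from the orthogonality relation, I must know that $\underline{\psi}_+$ ranges over all of $\Ke$ as $\psi$ runs through $D(D_t(\B))$. Here the cut-off construction from the proof of Lemma~\ref{lem:closedDt} reenters: given any $v\in\Ke$ I would build edgewise, using smooth cut-offs supported near the endpoints, a function $\psi\in D(D_t^{\max})$ with $\underline{\psi}_+=v$ and $\underline{\psi}_-=\B v$, which then lies in $D(D_t(\B))$ and realizes the prescribed outgoing trace. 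Once this surjectivity is in hand, every identity above becomes a direct consequence of \eqref{intbp}.
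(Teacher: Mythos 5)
Your proposal is correct and follows essentially the same route as the paper: closedness of $\Me(\B)$ combined with Lemma~\ref{lem:closedDt}, the sandwich $D_t^{\min}\subset D_t(\B)\subset D_t^{\max}$ and its adjoint to identify the action $-\varphi'$, the Lagrange identity \eqref{intbp} to pin down the adjoint domain, and the same integration by parts for the two numerical-range identities. The only (welcome) difference is that you make explicit the surjectivity of the trace map from $D(D_t(\B))$ onto $\Me(\B)$ via the cut-off construction, a point the paper's orthogonal-complement formulation $J\underline{\varphi}\in\Me(\B)^{\perp}$ uses implicitly.
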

\begin{proof}
By Lemma~\ref{lem:closedDt} $D_t(\B)$ is closed since $\Me(\B)$ is closed.
Note that from $D_t^{\min}\subset D_t(\B)\subset D^{\max}_1$ it follows by taking adjoints that $-D_t^{\min}\subset D_t(\B)^*\subset -D_t^{\max}$. Hence it follows from \eqref{eq:boundaryterms} that
\begin{align*}
D(D_t(\B)^*)=\{\varphi\in D_t^{\max}\colon J\underline{\varphi}\in \Me(\B)^{\perp}\}.
\end{align*}
Note that 
\begin{align*}
\Me(\Be)=\ker \begin{bmatrix}
\mathds{1} & -\B
\end{bmatrix} \perp \Ran \begin{bmatrix}
\mathds{1} \\ -\B^*
\end{bmatrix} = 
\ker \begin{bmatrix}
\B^* & \mathds{1}
\end{bmatrix},
\quad \hbox{hence }  J (\Me(\B)^{\perp}) =\ker \begin{bmatrix}
\B^* & -\mathds{1}
\end{bmatrix}.
\end{align*}

Moreover, for $\psi\in D(D_t(\B))$ one obtains by integration by parts
\begin{align*}
 	\Real\langle D_t(\B)\psi,\psi \rangle&=
 	\tfrac{1}{2}
 \left( \langle D_t(\B)\psi,\psi \rangle + \overline{\langle D_t(\B)\psi,\psi \rangle} \right)\\
& =	\tfrac{1}{2} \left( \langle \psi',\psi \rangle + \langle \psi,\psi' \rangle \right) 
 = \tfrac{1}{2}\left(\langle \underline{\psi}_+, \underline{\psi}_+\rangle_{\Ke} - \langle \underline{\psi}_-,\underline{\psi}_-\rangle_{\Ke} \right)\\
 &= \tfrac{1}{2} \left(\langle \underline{\psi}_+, \underline{\psi}_+\rangle_{\Ke} - \langle \B\underline{\psi}_+,\B\underline{\psi}_+\rangle_{\Ke} \right)
 = \tfrac{1}{2} \langle (\mathds{1}-\B^*\B)\underline{\psi}_+, \underline{\psi}_+\rangle_{\Ke}.
 	\end{align*}
A similar proof yields the claimed identity for $\Real\langle D_t(\B)^*\psi,\psi \rangle$.
\end{proof}

\begin{remark}[Spectral inclusion]\label{rem:spectrum}
	Note that if $\B$ is a contraction then $\sigma(D_t(\B))\subset \{z\in \C\colon \Real z \leq 0\}$ since the spectrum is contained in the closure of the numerical range. The case $\sigma(D_t(\B))=\emptyset$ can occur, and in particular for a compact graph with $\B=0$ (which corresponds to the boundary condition $\psi(\partial_-(i))=0$ for all $i\in \Ie$) one has $\sigma(D_t(\B))=\emptyset$.
\end{remark} 

\begin{proposition}[M-accretivity and invertibility of $D_t(\B)$]\label{propo:maccretive}
Let $\B\in \mathcal{L}(\Ke)$. 
\begin{enumerate}
	\item[(a)] If $\B$ is a contraction on $\Ke$, i.e., $\norm{\B}_{\mathcal{L}(\Ke)}\leq 1$, then $D_t(\B)$ is m-accretive in $L^2(\Ge,\au;\cX)$;
	\item[(b)] If $\B$ is a strict contraction, i.e., $\norm{\B}_{\mathcal{L}(\Ke)}< 1$, then $D_t(\B)$ is boundedly invertible,
\item[(c)] if $\B$ is unitary, i.e., $\B^*\B=\B\B^*=\mathds{1}$, then $D_t(\B)$ is skew-self-adjoint, i.e., $D_t(\B)^*=-D_t(\B)$.
\end{enumerate}
\end{proposition}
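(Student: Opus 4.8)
The plan is to reduce all three claims, via Lemma~\ref{lem:adjoint}, to elementary statements about the boundary operator $\B$ on $\Ke$, since that lemma already supplies the adjoint $D_t(\B)^*$ together with the two numerical-range identities. For part (a) I first record that $D_t(\B)$ is densely defined, because its domain contains $D_t^{\min}=W_0^{1,2}(\Ge,\au;\cX)$, and closed by Lemma~\ref{lem:adjoint}. When $\norm{\B}\le 1$ we have $\mathds{1}-\B^*\B\ge 0$ and $\mathds{1}-\B\B^*\ge 0$, so the identities of Lemma~\ref{lem:adjoint} say precisely that both $D_t(\B)$ and $D_t(\B)^*$ are accretive. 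I then invoke the standard surjectivity criterion: for $\lambda>0$, accretivity yields $\norm{(\lambda+D_t(\B))\psi}\ge\lambda\norm{\psi}$, so $\lambda+D_t(\B)$ is injective with closed range, while $\Ran(\lambda+D_t(\B))^\perp=\Ker(\lambda+D_t(\B)^*)=\{0\}$ because $D_t(\B)^*$ is accretive as well. Hence $\lambda+D_t(\B)$ is onto for every $\lambda>0$, which is m-accretivity.

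For part (b) the strict bound $\norm{\B}<1$ makes $\mathds{1}-\B^*\B\ge(1-\norm{\B}^2)\mathds{1}$ and renders $\mathds{1}-\B$ and $\mathds{1}-\B^*$ boundedly invertible by a Neumann series. I would obtain $0\in\rho(D_t(\B))$ from two facts: density of $\Ran(D_t(\B))$ and a lower bound $\norm{D_t(\B)\psi}\ge c\norm{\psi}$. Density follows exactly as in (a): a kernel element of $D_t(\B)^*$ is edgewise constant, hence its two boundary vectors coincide, and the adjoint condition $\underline{\varphi}_+=\B^*\underline{\varphi}_-$ collapses to $(\mathds{1}-\B^*)\underline{\varphi}_-=0$, forcing $\varphi=0$. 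For the lower bound I argue by contradiction along a normalized sequence with $\psi_n'\to 0$: on each internal edge the fundamental theorem of calculus gives $\underline{\psi_n}_+-\underline{\psi_n}_-\to 0$, the boundary condition $\underline{\psi_n}_-=\B\underline{\psi_n}_+$ then yields $(\mathds{1}-\B)\underline{\psi_n}_+\to 0$, and invertibility of $\mathds{1}-\B$ drives all boundary values to zero; since each $\psi_n$ is then $L^2$-close to its vanishing boundary data, $\norm{\psi_n}\to 0$, a contradiction. A closed operator that is bounded below and has dense range is boundedly invertible.

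For part (c) I use that unitarity gives $\B^*=\B^{-1}$, so the adjoint domain from Lemma~\ref{lem:adjoint}, defined by $\underline{\varphi}_+=\B^*\underline{\varphi}_-$, coincides with the primal condition $\underline{\varphi}_-=\B\underline{\varphi}_+$ (multiply by $\B$ and use $\B\B^*=\mathds{1}$). Thus $D(D_t(\B)^*)=D(D_t(\B))$ and $D_t(\B)^*\varphi=-\varphi'=-D_t(\B)\varphi$, i.e.\ $D_t(\B)^*=-D_t(\B)$; consistently, $\mathds{1}-\B^*\B=\mathds{1}-\B\B^*=0$ makes both numerical ranges purely imaginary.

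I expect the lower bound in (b) to be the main obstacle: accretivity by itself only controls the boundary values $\underline{\psi}_+$, so converting this into a bound on the full $L^2$-norm forces one to combine the edgewise Poincar\'e-type estimate with the invertibility of $\mathds{1}-\B$, and it is precisely here that the finiteness of the internal edges and the strictness $\norm{\B}<1$ enter. This mechanism is cleanest on compact graphs; on the half-line pieces there is no spectral gap at $0$, so for such edges one should not expect the lower bound to survive in the same form, and the invertibility statement is most naturally read in the compact setting.
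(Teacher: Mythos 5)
Your proof is correct, and for parts (a) and (c) it follows the paper's route exactly: everything is read off from the adjoint description and the two numerical-range identities of Lemma~\ref{lem:adjoint}, with (a) completed by the standard ``accretive with accretive adjoint implies m-accretive'' criterion that the paper delegates to \cite[Cor.~3.17]{EngNag00}, and (c) obtained by observing that unitarity of $\B$ turns the adjoint boundary condition $\underline{\varphi}_+=\B^*\underline{\varphi}_-$ into $\underline{\varphi}_-=\B\underline{\varphi}_+$. The genuine difference is in (b). The paper disposes of it in one clause by appealing to Remark~\ref{rem:spectrum}, i.e.\ to the inclusion of the spectrum in the closure of the numerical range; as written this is not conclusive, since $0$ always lies in the numerical range (take $\psi$ real-valued, supported in the interior of one edge), so strict contractivity of $\B$ does not move the numerical range away from the origin. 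Your replacement --- the kernel of $D_t(\B)^*$ consists of edgewise constants, so $(\mathds{1}-\B^*)\underline{\varphi}_-=0$ forces triviality and hence dense range, combined with the fundamental-theorem-of-calculus estimate and invertibility of $\mathds{1}-\B$ to get $\norm{D_t(\B)\psi}\geq c\norm{\psi}$ --- is a complete, self-contained argument for what the paper only asserts, and it matches the later Corollary~\ref{prop:resolvent1}, which at $\lambda=0$ states precisely that $0\in\rho(D_t(\B))$ when $\mathds{1}-\B$ is invertible. Your closing caveat is also well taken: on a half-line edge $\partial_t$ fails to be surjective on $L^2$ (e.g.\ $f(t)=(1+t)^{-1}$ has no $L^2$ primitive), so statement (b) should indeed be read for compact time-graphs, a restriction the paper only makes explicit from Section~\ref{sec:DoreVenni} onwards; here your analysis is more careful than the source.
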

\begin{proof}
	It is a direct consequence of Lemma~\ref{lem:adjoint} that if $\norm{\B}_{\mathcal{L}(\Ke)}\leq 1$, then
	\begin{align*}
	\Real\langle D_t(\B)\psi,\psi \rangle \geq 0 \quad \hbox{and} \quad \Real\langle D_t(\B)^*\varphi,\varphi \rangle\geq 0
	\end{align*}
	for all $\psi\in D(D_t(\B))$ and all $\varphi\in D(D_t(\B)^*)$, respectively, i.e., $D_t(\B)$ is m-accretive~\cite[Cor.~3.17]{EngNag00}. Recall that one has for the operator norm in $\Ke$ that 
	$\norm{\B}_{\mathcal{L}(\Ke)}^2 =\norm{\B^*\B}_{\mathcal{L}(\Ke)}=\norm{\B\B^*}_{\mathcal{L}(\Ke)}$. Hence, one obtains from Lemma~\ref{lem:adjoint} the 
	following proposition, where part (b) follows using Remark~\ref{rem:spectrum}.
\end{proof}

By well-established results about operators with bounded $H^{\infty}$-calculus in Hilbert spaces, cf.\ \cite[5.2.2.~Thm.]{Are04} and also \cite[Chapt.~11]{KunWei04}, \cite[Cor.~7.1.8]{Haa06}, the following holds;  the notation $\phi^{\infty}$ for the angle of bounded $H^\infty$-calculus is introduced in~\cite[\S~4.5]{Are04}.
\begin{corollary}[Bounded $H^{\infty}$-calculus for $D_t(\B)$]\label{cor:Hinfty}
If 
$\B\in \cL(\Ke)$ is a contraction, then $D_t(\B)$ has a bounded $H^{\infty}\big(L^2(\Ge,\au;\cX)\big)$-calculus of angle $\phi_{D_t(\B)}^{\infty}=\frac{\pi}{2}$.
\end{corollary}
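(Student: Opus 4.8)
The plan is to read the angle $\phi^{\infty}_{D_t(\B)}=\frac{\pi}{2}$ off two inequalities: the upper bound comes for free from m-accretivity together with the Hilbert space theory of the $H^\infty$-calculus, while the lower bound requires ruling out every smaller angle, which I would do by identifying $-D_t(\B)$ as the generator of a non-analytic, translation-type semigroup.

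For the upper bound, first note that by Proposition~\ref{propo:maccretive}(a) the hypothesis that $\B$ is a contraction makes $D_t(\B)$ m-accretive on the Hilbert space $L^2(\Ge,\au;\cX)$. I would then invoke the cited Hilbert space result (\cite[5.2.2~Thm.]{Are04}, \cite[Chapt.~11]{KunWei04}, \cite[Cor.~7.1.8]{Haa06}): every m-accretive operator on a Hilbert space admits a bounded $H^{\infty}(\Sigma_\theta)$-calculus for each angle $\theta\in(\tfrac{\pi}{2},\pi)$. Taking the infimum over such $\theta$ gives $\phi^{\infty}_{D_t(\B)}\leq \frac{\pi}{2}$. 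I would remark here that this step needs no injectivity of $D_t(\B)$, so the possibly nontrivial kernel arising for boundary operators with eigenvalue $1$ (such as the periodic case $\B=\mathds{1}$) is not an obstruction.

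For the matching lower bound I would use the general comparison $\phi^{\infty}_{D_t(\B)}\geq \phi_{D_t(\B)}$ between the $H^\infty$-angle and the sectoriality angle, which reduces the task to showing that $D_t(\B)$ is \emph{not} sectorial of any angle strictly below $\frac{\pi}{2}$; equivalently, that $-D_t(\B)$ does not generate a bounded analytic semigroup. Since $D_t(\B)$ is m-accretive, $-D_t(\B)$ generates a contraction $C_0$-semigroup $(T(s))_{s\geq 0}$ which edgewise solves the transport equation $\partial_s u+\partial_t u=0$ and thus acts by translation, with the boundary values fed through $\B$ at the vertices. An analytic semigroup would map $L^2(\Ge,\au;\cX)$ into the domain $D(D_t^{\max})=W^{1,2}(\Ge,\au;\cX)$ for every $s>0$; but for small $s$ and a generic $L^2$-profile without Sobolev regularity, the translate $T(s)\psi$ still fails to lie in $W^{1,2}(\Ge,\au;\cX)$, so no smoothing occurs and $(T(s))$ cannot be analytic. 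Hence $\phi_{D_t(\B)}=\frac{\pi}{2}$, and therefore $\phi^{\infty}_{D_t(\B)}\geq \frac{\pi}{2}$, which combined with the upper bound yields the claimed equality.

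The routine part is the upper bound, which is essentially a citation once m-accretivity is in hand. The main obstacle is the lower bound: one must argue that the angle cannot be pushed below $\frac{\pi}{2}$ uniformly in $\B$, and this cannot be done via the spectrum, since by Remark~\ref{rem:spectrum} the spectrum of $D_t(\B)$ may even be empty (e.g. compact graph with $\B=0$). The robust substitute is precisely the non-analyticity of the translation semigroup, which holds regardless of spectral location; making the failure of the domain-mapping property rigorous for an appropriately chosen non-Sobolev test function is the only point requiring care.
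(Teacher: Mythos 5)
Your proposal is correct, and its first half is exactly the paper's argument: the paper proves this corollary purely by citation, combining the m-accretivity of $D_t(\B)$ from Proposition~\ref{propo:maccretive}(a) with the classical Hilbert-space fact that m-accretive operators admit a bounded $H^{\infty}(\Sigma_\theta)$-calculus for every $\theta>\tfrac{\pi}{2}$ (the references \cite[5.2.2~Thm.]{Are04}, \cite[Chapt.~11]{KunWei04}, \cite[Cor.~7.1.8]{Haa06}). Where you go beyond the paper is the lower bound: the paper never argues that the angle cannot be pushed below $\tfrac{\pi}{2}$, and indeed for its only use of this corollary --- feeding the Kalton--Weis sum theorem in Section~\ref{sec:DoreVenni}, where one needs $\phi^{\infty}_{D_t(\B)}+\phi^{R}_{-A_\Ee}<\pi$ --- only the upper estimate $\phi^{\infty}_{D_t(\B)}\leq\tfrac{\pi}{2}$ matters. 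Your argument for the reverse inequality via $\phi^{\infty}\geq\phi$ and the non-analyticity of the translation-type semigroup generated by $-D_t(\B)$ is sound: on each edge the solution of $\partial_s u+\partial_t u=0$ is a pure shift away from an $s$-neighbourhood of the initial vertex, so a profile with a non-$H^1$ singularity at an interior point of an edge is carried to another interior point and $T(s)$ cannot map $L^2(\Ge,\au;\cX)$ into $D(D_t(\B))\subset W^{1,2}(\Ge,\au;\cX)$; your observation that a spectral argument is unavailable (Remark~\ref{rem:spectrum} allows $\sigma(D_t(\B))=\emptyset$) is also apt. Two minor points to watch: the cited Hilbert-space result is usually stated for injective sectorial operators, so the non-injective case (e.g.\ $\B=\mathds{1}$) strictly speaking requires the standard reduction along the decomposition of the space into $\ker$ and closure of range, which you gesture at but do not carry out; and the exact-equality claim tacitly assumes the graph carries at least one nontrivial edge, which is harmless here.
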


\subsection{Spatial operators}
As before we assume that $X_j$ are Hilbert spaces. For each edge $j\in \Ie\cup \Ee$ let
$A_j$ be a given operator in $X_j$ with $D(A_j)\subset X_j$.
We consider the abstract time-graph-Cauchy problem
\begin{equation}\label{eq:firstCP}
\left\{
\begin{split}
(\partial_t - A_j)\psi_j(t_j) &=f_j(t_j),\qquad t_j\in\bigsqcup_{j\in \Ie \cup \Ee } I_j,\\
 \underline{\psi}_- -\B \underline{\psi}_+ &=0.
\end{split}
\right.
\end{equation}

Note that the operators $A_j$ in $X_j$ induce operators in $L^2(I_j;X_j)$ which with a slight abuse of notation are also denoted by $A_j$ and $D(A_j)=L^2(I_j;D(A_j))$. Using this we define
the operator $A_\Ee$ in $L^2(\Ge,\au;\cX)$: it acts on functions supported on the time branches by
\begin{align}\label{def:A}
D(A_\Ee):=\bigoplus_{j\in \Ie\cup \Ee} L^2(I_j;D(A_j)),\quad
(A_\Ee\psi)_j := A_j\psi_j, 
\end{align}
and with this the Cauchy problem \eqref{eq:firstCP} can be formulated as a maximal regularity problem
\begin{align}\label{eq:firstCP_A}
(D_t(\B) - A_\Ee)\psi =f, \quad\hbox{where } \psi \in D(D_t(\B))\cap D(A_\Ee) \hbox{ for } f\in L^2(\Ge,\au;\cX).
\end{align}
Moreover, the operators $A_j$ induce an operator $A_{\cV}$ in the space of boundary values $\Ke$ that acts on functions supported on the vertices by
\begin{align}\label{def:Apm}
D(A_{\cV}):= \bigoplus_{j\in \Ie\cup \Ee} D(A_j), \quad
(A_{\cV}\, \underline{\psi})_j := A_j\underline{\psi}_j, 
\end{align}
which in turn induces an operator in
$\Ke^2$ by
\begin{align}\label{def:AKe}
D(A_{\cV^2}):= D(A_{\cV})\oplus D(A_{\cV}), \quad
A_{\cV^2}[\psi]  &:= \begin{bmatrix}A_{\cV}\underline{\psi}_- \\ A_{\cV}\underline{\psi}_+\end{bmatrix}, \quad [\psi]=\begin{bmatrix}\underline{\psi}_- \\ \underline{\psi}_+\end{bmatrix}.
\end{align}
The following lemma is straightforward.
\begin{lemma}[Spectrum of induced operators]
	Let $A_j$ be operators in $X_j$ with domain $D(A_j)$ for $j\in \Ie\cup \Ee$. Then for the induced operators $A_{\Ee}$ in $L^2(\Ge,\au;\cX)$, $A_{\cV}$ in $\Ke$, and $A_{\cV^2}$ in $\Ke^2$ the following holds:
	\begin{enumerate}
		\item[(a)] $\sigma(A_{\Ee})=\sigma(A_{\cV})= \sigma(A_{\cV^2})=\bigcup_{j\in \Ie\cup \Ee}\sigma(A_j)$ as an equality of sets, i.e., without counting multiplicities; 
		\item[(b)] If $-A_j$ are sectorial of angle $\phi_{-A_j}\in [0,\pi)$, then $-A_{\Ee},-A_{\cV},-A_{\cV^2}$ are sectorial with same sectoriality angle
		\begin{align*}
		\phi_{A_{\Ee}}=\phi_{A_{\cV}}=\phi_{A_{\cV^2}}=\max_{j\in \Ie\cup \Ee}\phi_{A_j}. 
		\end{align*}
	\end{enumerate}
\end{lemma}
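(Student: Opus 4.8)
The plan is to exploit that $A_\Ee$, $A_\cV$ and $A_{\cV^2}$ are all \emph{finite} direct sums of copies of the operators $A_j$ (or of operators canonically induced by them), so that everything reduces to two elementary building blocks: the behaviour of spectrum and sectoriality under finite orthogonal direct sums, and under the ``pointwise'' induction $A_j \rightsquigarrow \tilde A_j$ on $L^2(I_j;X_j)$. First I would record the direct-sum fact: if $T=\bigoplus_{k=1}^N T_k$ with $D(T)=\bigoplus_k D(T_k)$ on a finite Hilbert-space sum $\bigoplus_k H_k$, then for $\lambda\in\C$ the operator $\lambda-T$ is boundedly invertible if and only if each $\lambda-T_k$ is, in which case $(\lambda-T)^{-1}=\bigoplus_k(\lambda-T_k)^{-1}$ and $\|(\lambda-T)^{-1}\|=\max_k\|(\lambda-T_k)^{-1}\|$. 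Consequently $\rho(T)=\bigcap_k\rho(T_k)$, hence $\sigma(T)=\bigcup_k\sigma(T_k)$, and the resolvent norm is the maximum of the summand resolvent norms. Applied to $A_\cV=\bigoplus_{j}A_j$ and to $A_{\cV^2}=A_\cV\oplus A_\cV$ this already yields the corresponding assertions.

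The second building block, the only mildly delicate point, is that the pointwise induction preserves spectrum and resolvent norm: writing $\tilde A$ for the operator on $L^2(I;X)$ with domain $L^2(I;D(A))$ and $(\tilde A\psi)(t)=A\psi(t)$, I would show $\sigma(\tilde A)=\sigma(A)$ with $\|(\lambda-\tilde A)^{-1}\|=\|(\lambda-A)^{-1}\|$ for $\lambda\in\rho(A)$. The inclusion $\rho(A)\subset\rho(\tilde A)$ is immediate: the pointwise operator $R_\lambda\psi:=(\lambda-A)^{-1}\psi(\cdot)$ satisfies $\|R_\lambda\psi\|_{L^2}\le\|(\lambda-A)^{-1}\|\,\|\psi\|_{L^2}$ and inverts $\lambda-\tilde A$, so that the integral estimate gives ``$\le$'' for the norms while testing against $\psi=x\otimes\chi_{I'}$ (for a bounded $I'\subset I$ of positive measure and $x$ nearly norming $(\lambda-A)^{-1}$) gives ``$\ge$''. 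For the reverse inclusion $\rho(\tilde A)\subset\rho(A)$ I would argue directly: injectivity of $\lambda-A$ follows by embedding an eigenvector as $x\otimes\chi_{I'}$; surjectivity follows by solving $(\lambda-\tilde A)\Phi=y\otimes\chi_{I'}$ and observing that then $(\lambda-A)\Phi(t_0)=y$ for a.e.\ $t_0\in I'$, so any such $\Phi(t_0)\in D(A)$ is a preimage of $y$; boundedness of the inverse is then automatic from closedness of $A$ via the closed graph theorem.

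Finally I would assemble the pieces. For (a), combining the two blocks gives $\sigma(A_\Ee)=\bigcup_j\sigma(\tilde A_j)=\bigcup_j\sigma(A_j)$, and likewise $\sigma(A_\cV)=\sigma(A_{\cV^2})=\bigcup_j\sigma(A_j)$. For (b), the same two reductions apply verbatim to the family $\{-A_j\}$: sectoriality of $-A_j$ of angle $\phi_{A_j}$ gives $\sigma(-A_j)\subset\overline{\Sigma_{\phi_{A_j}}}$ and, for every $\nu>\phi_{A_j}$, a uniform bound on $\|\lambda(\lambda+A_j)^{-1}\|$ in $\{\nu\le|\arg\lambda|\le\pi\}$. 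Since the induction preserves resolvent norms and the finite direct sum takes the maximum over $j$, for every $\nu>\max_j\phi_{A_j}$ one obtains a finite bound on $\|\lambda(\lambda+A_\bullet)^{-1}\|$ with $A_\bullet\in\{A_\Ee,A_\cV,A_{\cV^2}\}$, while $\sigma(-A_\bullet)=\bigcup_j\sigma(-A_j)\subset\overline{\Sigma_{\max_j\phi_{A_j}}}$ by (a); as the estimate already fails below the angle of the summand attaining the maximum, the sectoriality angle equals $\max_j\phi_{A_j}$ in each case. The main (and really only) obstacle is the reverse spectral inclusion in the induction step; the ``pick a good $t_0$'' argument above disposes of it and works unchanged on the half-lines attached to external edges, since one only ever tests against indicators $\chi_{I'}$ of bounded subsets.
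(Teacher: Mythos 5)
Your proof is correct. The paper offers no argument at all for this lemma (it is introduced with ``The following lemma is straightforward'' and stated without proof), so there is nothing to compare against; your two-step reduction --- finite direct sums take unions of spectra and maxima of resolvent norms, and the pointwise induction $A\rightsquigarrow\tilde A$ on $L^2(I;X)$ preserves spectrum and resolvent norm --- is a complete and correct justification, and you rightly identify the only non-trivial point, namely the inclusion $\rho(\tilde A)\subset\rho(A)$, which your ``test against $x\otimes\chi_{I'}$ and pick a good $t_0$'' argument handles properly, including on the unbounded intervals attached to external edges.
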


\section{The Kalton and Weis sum theorem and the parabolic operator}\label{sec:DoreVenni}

\subsection{Solvability of the inhomogeneous problem with homogeneous boundary conditions}
Having specified time-derivative and spatial operators, one can now  define the parabolic operator
\begin{align*}
P(\B):=D_t(\B)-A_\Ee \quad \hbox{with} \quad P(\B)=D(D_t(\B))\cap D(A_\Ee).
\end{align*}
The Kalton--Weis sum theorem, formulated here in Theorem~\ref{thm:DoreVenni}, can now be applied to $D_t(\B)$ and $A_\Ee$ using Corollary~\ref{cor:Hinfty} and assuming that the $A_\Ee$ is sectorial and commuting  with $D_t(\B)$. This gives the well-posedness for the time-graph Cauchy problem with homogeneous initial conditions and inhomogeneous right hand-side.

\begin{proposition}\label{prop:invertible}
Let $\Ge$ be balanced and  $\B$ be a contraction in $\Ke$. Let  $X_j$ be for  all $j\in \Ie\cup \Ee$ Hilbert spaces and $-A_j$ sectorial operators of angle $\phi_{-A_i}<\pi/2$ on $X_j$. Assume that $D_t(\B)$ and $A_\Ee$ are resolvent commuting.
Then the operator $P(\B)$
is closed. 

If furthermore $A_\Ee$ or $D_t(\B)$ are boundedly invertible, then so is $D_t(\B)-A_\Ee$ and in this case there is a constant $C=C(\B,\Ge,\au)>0$ such that for any $f\in L^2(\Ge,\au;\cX)$ there is a unique solution $\psi$ to \eqref{eq:firstCP} with
	\begin{align*}
	\psi \in D_t(\B) \cap D(A_\Ee) \hbox{ and } \norm{\psi'}_{L^2(\Ge,\au;\cX)} + \norm{A_\Ee\psi}_{L^2(\Ge,\au;\cX)} \leq C \norm{f}_{L^2(\Ge,\au;\cX)}.
	\end{align*}
\end{proposition}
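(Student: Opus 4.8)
The plan is to apply the Kalton--Weis sum theorem (Theorem~\ref{thm:DoreVenni}) to the pair of operators $D_t(\B)$ and $-A_\Ee$ on the Hilbert space $L^2(\Ge,\au;\cX)$. First I would verify the hypotheses one by one. Since each $X_j$ is a Hilbert space, so is $L^2(\Ge,\au;\cX)$, and on Hilbert spaces $\cR$-sectoriality coincides with sectoriality while the $\cH^\infty$-class and the $\cR\cS$-class are the natural objects to feed into the theorem. Because $\B$ is assumed to be a contraction, Corollary~\ref{cor:Hinfty} gives that $D_t(\B)$ admits a bounded $H^\infty$-calculus of angle $\phi^\infty_{D_t(\B)}=\tfrac{\pi}{2}$, so $D_t(\B)\in\cH^\infty\big(L^2(\Ge,\au;\cX)\big)$. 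For the spatial part, the assumption that each $-A_j$ is sectorial of angle $\phi_{-A_j}<\pi/2$ together with the preceding Lemma on induced operators yields that $-A_\Ee$ is sectorial of angle $\phi_{-A_\Ee}=\max_{j}\phi_{-A_j}<\pi/2$; on a Hilbert space this sectoriality is exactly $\cR$-sectoriality, so $-A_\Ee\in\cR\cS\big(L^2(\Ge,\au;\cX)\big)$.

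Next I would check the angle condition. We have $\phi^\infty_{D_t(\B)}+\phi^R_{-A_\Ee}=\tfrac{\pi}{2}+\max_j\phi_{-A_j}$. Here is the one subtle point: this sum is strictly less than $\pi$ precisely because the hypothesis is $\phi_{-A_j}<\pi/2$ with strict inequality, so $\max_j\phi_{-A_j}<\pi/2$ and hence the total is $<\pi$. Combined with the resolvent-commuting assumption on $D_t(\B)$ and $A_\Ee$, all hypotheses of Theorem~\ref{thm:DoreVenni} are in place. The theorem then delivers directly that the sum $D_t(\B)+(-A_\Ee)=D_t(\B)-A_\Ee=P(\B)$ is closed with domain $D(D_t(\B))\cap D(A_\Ee)$, which is exactly the asserted domain of $P(\B)$, and moreover that $P(\B)\in\cR\cS$ with the coercive estimate
\begin{align*}
\norm{D_t(\B)\psi}_{L^2(\Ge,\au;\cX)}+\norm{A_\Ee\psi}_{L^2(\Ge,\au;\cX)}\leq C\norm{(D_t(\B)-A_\Ee)\psi}_{L^2(\Ge,\au;\cX)}
\end{align*}
for all $\psi\in D(D_t(\B))\cap D(A_\Ee)$. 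This proves the first assertion (closedness) and already produces the right-hand side of the desired inequality, since $\norm{D_t(\B)\psi}=\norm{\psi'}$ by the definition of $D_t(\B)$ as the maximal derivative acting as $\psi\mapsto\psi'$.

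For the second assertion I would invoke the final sentence of Theorem~\ref{thm:DoreVenni}: the sum is invertible provided one of the two summands is invertible. Thus if either $A_\Ee$ or $D_t(\B)$ is boundedly invertible, then $D_t(\B)-A_\Ee=P(\B)$ is boundedly invertible. Given $f\in L^2(\Ge,\au;\cX)$, setting $\psi:=P(\B)^{-1}f$ gives the unique element of $D(D_t(\B))\cap D(A_\Ee)$ solving $(D_t(\B)-A_\Ee)\psi=f$, which is exactly the operator-theoretic form~\eqref{eq:firstCP_A} of the time-graph Cauchy problem~\eqref{eq:firstCP}; membership in $D(D_t(\B))$ encodes both the maximal regularity and the transmission condition $\underline{\psi}_--\B\underline{\psi}_+=0$. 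Finally, feeding this $\psi$ into the coercivity estimate above and bounding the right-hand side by $\norm{(D_t(\B)-A_\Ee)\psi}=\norm{f}$ yields
\begin{align*}
\norm{\psi'}_{L^2(\Ge,\au;\cX)}+\norm{A_\Ee\psi}_{L^2(\Ge,\au;\cX)}\leq C\norm{f}_{L^2(\Ge,\au;\cX)},
\end{align*}
with $C$ depending only on the data $\B,\Ge,\au$ through the constant furnished by the sum theorem.

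The main obstacle is not in any of these deductions individually, which are essentially a careful matching of hypotheses, but in the tacit identification of the abstract operator-theoretic statement with genuine well-posedness of~\eqref{eq:firstCP}: one must make sure that $\psi\in D(D_t(\B))$ really does encode the coupling condition and the correct edgewise Sobolev regularity, and that solving the stationary sum equation on the Bochner space is equivalent to solving the evolution problem on the time-graph. I would therefore be most careful in spelling out that equation~\eqref{eq:firstCP} and its reformulation~\eqref{eq:firstCP_A} are the same problem, so that uniqueness and the estimate transfer verbatim. The resolvent-commuting hypothesis, which is assumed rather than proved here, is exactly what is needed to legitimize applying the sum theorem, and I would flag that its verification in concrete examples (e.g.\ when the $A_j$ are independent of the time variable) is where the real work lies in applications.
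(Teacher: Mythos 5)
Your proposal is correct and follows exactly the route the paper intends: the paper itself gives no separate proof of Proposition~\ref{prop:invertible} beyond the remark that the Kalton--Weis theorem is applied to $D_t(\B)$ and $A_\Ee$ via Corollary~\ref{cor:Hinfty} and the sectoriality of the induced operator $-A_\Ee$, which is precisely the argument you spell out (including the angle bookkeeping $\tfrac{\pi}{2}+\max_j\phi_{-A_j}<\pi$, the coercive estimate, and the invertibility clause of Theorem~\ref{thm:DoreVenni}). Your additional care in identifying the operator equation~\eqref{eq:firstCP_A} with the time-graph problem~\eqref{eq:firstCP} is a sensible elaboration of what the paper leaves implicit.
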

\begin{remark}
A criterion to assure that the operators $D_t(\B)$ and $A_\Ee$  commute is that 
$(A_{\cV}-\lambda)^{-1}$ and $\B$ commute for $\lambda\in \rho(A_{\cV})$.
\end{remark}

\subsection{Trace spaces and the parabolic operator}
The approach using the Kalton--Weis result on commuting operators allowed us to find a simple way how to check solvability for the time-graph Cauchy problem with homogeneous boundary data. However, the condition that $D_t(\B)$ and $A_{\Ee}$ commute seems too strict since \eqref{eq:firstCP} makes sense without it, and in fact closedness of the parabolic operator can be ensured under weaker assumptions. 

For notational simplicity  we assume from now on  that there are no external edges, i.e., the time-graph is assumed to be compact. 
Considering the maximal parabolic operator
\begin{align*}
P^{\max}:=D_t^{\max} - A_\Ee,
\end{align*}
where  $\Ee=\emptyset$, one defines the corresponding trace space
\begin{align*}
\Ke_A := [X,D(A_\cV)]_{1/2}= \bigoplus_{j\in \Ie} [X_j,D(A_j)]_{1/2},
\end{align*}
where $[\cdot,\cdot]_\theta$ for $\theta\in (0,1)$ denotes the complex interpolation functor. Recall that for sectorial $-A$ one has the continuous embedding
\begin{align}\label{eq:traceembedding}
D(D_t^{\max} -A_\Ee) \hookrightarrow \bigoplus_{j\in \Ie} BUC(I_j;[X_j,D(A_j)]_{1/2}),
\end{align}
where $BUC$ stands for the space of bounded uniformly continuous functions, cf.\ \cite[Section 3.4]{PruessSimonett} or \cite[Theorem 4.10.2]{Ama95}.

\begin{definition}[Boundary conditions compatible with trace space]\label{def:compatiblebc_trace}
	The operator $\B\in \cL(\Ke)$ is said to be {\textit{compatible with $\Ke_A$}} if it restricts to an operator in $\Ke_A$, i.e., $\B\vert_{\Ke_A}\in \cL(\Ke_A)$ holds.
\end{definition}
\begin{remark}\label{rem-s6}
	\begin{enumerate}
		\item[(a)] The actual definition of trace spaces $(\cX,D(A_\cV))_{1-1/p,p}$ uses the real interpolation functor  for $p\in (1,\infty)$, and here it is used that for $p=2$ one has $[\cdot,\cdot]_{1/2}=(\cdot,\cdot)_{1/2,2}$. If $D(A_\cV)\subset \cX$ is dense, then all interpolation spaces 
		$(\cX,D(A_\cV))_{\theta,p},[ \cX,D(A_\cV)]_{\theta}\subset \cX$ for $\theta\in [0,1], p\in (1,\infty)$
		are dense in $\cX$.		
		\item[(b)] 	Note that for $A_j$ having bounded imaginary powers and $A_j$ injective one has $[X_j,D(A_j)]_{1/2}=D(A^{1/2})$, and compatibility with the trace space $\Ke_A$ in the sense of Definition~\ref{def:compatiblebc_trace} holds provided one has
		that $A_{\cV}^{1/2}$ and $\B$ commute.
	\end{enumerate}
\end{remark} 

\begin{lemma}[Closedness of the parabolic operator]
	Let each $-A_j$ be sectorial of angle smaller than $\pi/2$,
	and let $\B\in \cL(\Ke)$ such that $\B\in \cL(\Ke)$ is compatible with $\Ke_A$. Then $P(\B)$
	is a closed operator on $L^2(\Ge,\au;\cX)$.
\end{lemma}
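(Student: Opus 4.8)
The plan is to realize $P(\B)$ as the restriction of the maximal parabolic operator $P^{\max}=D_t^{\max}-A_\Ee$ to the subspace of functions whose boundary values satisfy $\underline{\psi}_- = \B\underline{\psi}_+$, and to invoke the elementary fact that the restriction of a closed operator $T$ to a subspace $D\subset D(T)$ is again closed precisely when $D$ is closed in $D(T)$ for the graph norm. Thus the proof splits into two tasks: first, showing that $P^{\max}$ itself is closed; second, showing that the boundary condition cuts out a graph-norm-closed subspace of $D(P^{\max})$. The second task is soft once the trace embedding \eqref{eq:traceembedding} is available; the genuinely analytic step -- and the main obstacle -- is the first, since $D_t^{\max}$ alone is not sectorial and the Kalton--Weis theorem does not apply directly to $P^{\max}$.

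For closedness of $P^{\max}$ I would argue edge-wise, the maximal operator being decoupled across $\Ie$. On a single edge $I_j=(0,a_j)$, write $L_j=\partial_t-A_j$ with domain $W^{1,2}(I_j;X_j)\cap L^2(I_j;D(A_j))$. Since $-A_j$ is sectorial of angle smaller than $\pi/2$ it enjoys maximal $L^2$-regularity, so the associated operator with vanishing initial trace satisfies $\norm{\psi_j'}+\norm{A_j\psi_j}\le C\norm{L_j\psi_j}$ in $L^2(I_j;X_j)$ on functions with $\psi_j(0)=0$. For a general $\psi_j\in D(L_j)$ the embedding \eqref{eq:traceembedding} yields $\psi_j(0)\in[X_j,D(A_j)]_{1/2}$ with $\norm{\psi_j(0)}_{[X_j,D(A_j)]_{1/2}}\le C(\norm{\psi_j}+\norm{L_j\psi_j})$; subtracting the homogeneous lift $t\mapsto e^{tA_j}\psi_j(0)$, which lies in the maximal regularity space with norm controlled by $\norm{\psi_j(0)}_{[X_j,D(A_j)]_{1/2}}$ and solves $L_j w=0$, reduces matters to the vanishing-trace case and produces the a priori estimate
\begin{align*}
\norm{\psi_j'}_{L^2}+\norm{A_j\psi_j}_{L^2}\le C\bigl(\norm{\psi_j}_{L^2}+\norm{L_j\psi_j}_{L^2}\bigr),\quad \psi_j\in D(L_j).
\end{align*}
Summing over $j\in\Ie$ gives the corresponding estimate for $P^{\max}$, whence a graph-norm-Cauchy sequence in $D(P^{\max})$ has $\psi_n'$ and $A_\Ee\psi_n$ separately Cauchy in $L^2(\Ge,\au;\cX)$; closedness of the weak derivative and of $A_\Ee$ then identifies the limits, so $P^{\max}$ is closed.

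Finally, by \eqref{eq:traceembedding} the trace map $\psi\mapsto[\psi]$ is continuous from $D(P^{\max})$, equipped with the graph norm, into $\Ke_A^2$. Since $\B$ is compatible with $\Ke_A$, the map $[\psi]\mapsto\underline{\psi}_- -\B\underline{\psi}_+$ is bounded on $\Ke_A^2$, so its kernel $\Me(\B)\cap\Ke_A^2$ is closed in $\Ke_A^2$. Consequently $D(P(\B))$, being the preimage of this closed subspace under the continuous trace map, is closed in $D(P^{\max})$ for the graph norm. As $P(\B)$ is the restriction of the closed operator $P^{\max}$ to this graph-norm-closed domain, it is closed, which is the assertion. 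The delicate point remains the first step: everything hinges on pairing maximal $L^2$-regularity of the zero-trace problem with the correct identification of the trace space $\Ke_A$ furnished by \eqref{eq:traceembedding}, the maximal operator $D_t^{\max}$ being too large to be handled by the sum-theorem machinery on its own.
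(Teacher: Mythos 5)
Your overall strategy---first close the maximal operator $P^{\max}$, then cut out $D(P(\B))$ by a boundary condition that is continuous for the graph norm via \eqref{eq:traceembedding}---is the same as the paper's; the paper differs only in how it attacks $P^{\max}$, namely by even reflection onto $[-\delta,a_j]$ and a retraction onto the invertible zero-initial-value operator $P_{0,\delta}$, rather than by an a priori estimate. However, your execution of the first step contains a genuine gap: the estimate
\begin{align*}
\norm{\psi_j'}_{L^2}+\norm{A_j\psi_j}_{L^2}\le C\bigl(\norm{\psi_j}_{L^2}+\norm{L_j\psi_j}_{L^2}\bigr),\qquad \psi_j\in D(L_j),
\end{align*}
is false whenever $A_j$ is unbounded. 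Indeed, take $x\in[X_j,D(A_j)]_{1/2}$ and set $\psi_j:=e^{\cdot A_j}x$: then $L_j\psi_j=0$ and $\norm{\psi_j}_{L^2(I_j;X_j)}\le C\norm{x}_{X_j}$, while $\bigl(\int_0^{a_j}\norm{A_je^{tA_j}x}^2\,dt\bigr)^{1/2}$ is, together with $\norm{x}_{X_j}$, an equivalent norm on the trace space $(X_j,D(A_j))_{1/2,2}=[X_j,D(A_j)]_{1/2}$. Choosing $x_n$ with $\norm{x_n}_{X_j}=1$ but $\norm{x_n}_{[X_j,D(A_j)]_{1/2}}\to\infty$ destroys the estimate. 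The source of the error is a circularity: the embedding \eqref{eq:traceembedding} controls $\norm{\psi_j(0)}_{[X_j,D(A_j)]_{1/2}}$ by the \emph{intersection} norm $\norm{\psi_j}_{W^{1,2}}+\norm{A_j\psi_j}_{L^2}$ of the maximal regularity space, not by the graph norm $\norm{\psi_j}_{L^2}+\norm{L_j\psi_j}_{L^2}$ of $P^{\max}$; the bound on the initial trace that you need in order to subtract the homogeneous lift is precisely equivalent to the inequality you are trying to prove.

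The same family shows that the difficulty is not an artifact of your method: taking $x_n\to x$ in $X_j$ with $x\notin[X_j,D(A_j)]_{1/2}$, the functions $e^{\cdot A_j}x_n$ lie in $D(P^{\max})$, converge in $L^2$ to $e^{\cdot A_j}x\notin W^{1,2}(I_j;X_j)\cap L^2(I_j;D(A_j))$, and have $P^{\max}e^{\cdot A_j}x_n=0$; so $P^{\max}$ is not closed, and no argument that realizes $P(\B)$ purely as the restriction of a closed $P^{\max}$ to a graph-norm-closed subdomain can go through as stated. A correct proof has to use the transmission condition $\underline{\psi}_-=\B\underline{\psi}_+$ itself to exclude exactly these free semigroup orbits with initial data outside $\Ke_A$---compare the extreme case $\B=0$, where closedness follows from invertibility (maximal $L^2$-regularity), and the decomposition $\psi=\psi_f^0+e^{\cdot A}\underline{\psi}_-$ underlying the proof of Proposition~\ref{prop:resolvent}. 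Your second step (continuity of the trace map for the graph norm, plus compatibility of $\B$ with $\Ke_A$, so that the boundary condition survives the limit) is the same as the paper's and would be fine \emph{conditional} on the first; but as written the key a priori estimate is not available.
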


\begin{proof}
	One shows first that $P^{\max}$ is closed. Note that $P^{\max}$ decouples the edges and hence it is sufficient to prove closedness for a graph consisting of a single interval $[0,a]$. Consider the operator $P(\B)=P_{0,\delta}$ for $\B=0$ on $[-\delta,a_i]$ for $\delta>0$. This is closed and to trace back this property to $P^{\max}$ one considers continuous extension and restriction operators
	\begin{align*}
	E\colon D(P^{\max}) \rightarrow D(P_{0,\delta})\hbox{ and } R \colon D(P_{0,\delta}) \rightarrow D(P^{\max}) 
	\end{align*} 
	with $R\circ E = \mathds{1}_{D(P^{\max})}$, where the extension can be realized for instance by even reflection and then multiplying by a cut-off function with value one on $[-\delta/2,a]$ and zero in a neighborhood of $-\delta$. Then $P^{\max}=R \circ P_{0,\delta}\circ E$ and closedness can be proved straightforward.

	Now, let $\varphi_n\in D(P(\B))$ with 
	\begin{align*}
	\varphi_n \to \varphi \quad \hbox{and} \quad 	P(\B)\varphi_n \to \psi \quad \hbox{in } L^2(\Ge,\au;\cX).
	\end{align*}
	Then by closedness of $P^{\max}$ and since $P(\B)$ is a restriction of $P^{\max}$, $\varphi\in D(P^{\max})$ and $\psi=P^{\max}\varphi$. Using \eqref{eq:traceembedding}, it follows that
	$\underline{\varphi_n}_{\pm} \to \underline{\varphi}_{\pm}$, and hence $\varphi\in D(P(\B))$. 
\end{proof}

\section{The parabolic operator and the Green's functions approach}\label{sec:Green}
The operator theoretical consideration of the parabolic operator gives information on the solvability for homogeneous boundary data.
However, it does not provide a solution formula, and it does not include the case of in-homogeneous boundary data. 
To address these issues we supplement our findings by computing explicitly the Green's function for \eqref{eq:firstCP}.

\subsection{Green's function for the parabolic problem}\label{sec:Green-sub}
Now, we are in the position to collect suitable assumptions for the time-graph Cauchy problem; we stress that 
the following are more general than the ones in Proposition~\ref{prop:invertible}, where here $\Ee=\emptyset$ has been assumed for notational simplicity only. 
\begin{assumption}\label{ass:D1L2_2}
Let $\Ee=\emptyset$ and $\B\in \cL(\Ke)$. Let
 $X_j$ be a Hilbert space  and $-A_j$ a sectorial operator of angle $\phi_{-A_j}<\pi/2$ on $X_j$ for each $j\in \Ie$.  
\end{assumption}
In the following a solution formula is derived generalizing the variation of constants formula from semigroup theory. 
Note that square integral maps
\begin{align*}
k_{ij}\colon I_i \times I_j\rightarrow \cL(X_j;X_i), \quad i,j\in \Ie
\end{align*}
define integral operators acting on $L^2(\Ge,\au;\cX)$  via
\begin{align*}
\psi= \{\psi_i\}_{i\in \Ie} \mapsto  \left\{\sum_{j\in \Ie}\int_{I_j} k_{ij}(t_i,s_j) \psi_j(s_j) ds_j\right\}_{i\in \Ie} 
\end{align*}
In this sense, the Green's function for zero initial conditions, i.e., for $\B=0$, is
\begin{eqnarray}\label{eq:r0}
	\{r_0(t,s;A_\Ee)\}_{j,l}:= \begin{cases}e^{(t_j-s_j)A_j} & \hbox{if }j=l \hbox{ and } t_j\geq s_j, \\ 0  &\hbox{otherwise},
	\end{cases} \quad  t_j,s_l\in\bigsqcup_{j\in \Ie \cup \Ee } I_j.
\end{eqnarray} 	
Since each operator $-A_j$ is sectorial, it generates an  analytic $C_0$-semigroup; in particular, $e^{t_j A_j}$ is a well-defined bounded linear operator on $X_j$ for each $t_j$ in the time branch $I_j$. In the following we will adopt for $\tu = \{t_j\}_{j\in I_j}$, the notation
\begin{align*}
e^{\tu A}\colon \Ke\rightarrow \Ke, \quad \{\underline{\psi}_{j}\}_{j\in\Ie} \mapsto  \{e^{t_j A_j}\underline{\psi}_{j}\}_{j\in\Ie},
\end{align*}
and hence $e^{\tu A}\in \mathcal L(\Ke)$ is a diagonal block operator matrix in $\Ke$.

\begin{proposition}[Inhomogeneous problem with homogeneous boundary conditions]		
	\label{prop:resolvent}
Under the Assumption~\ref{ass:D1L2_2}, let $\B$ be compatible with $\Ke_A$, and let $(\mathds{1}-\B \, e^{\au A})\vert_{\Ke_A}$ be boundedly invertible in $\Ke_A$.
 Then $P(\B)$ is boundedly invertible, i.e., for each $f\in L^2(\Ge,\au;\cX)$ there exists a unique solution $\psi$ to \eqref{eq:firstCP} in $D(D_t(\B))\cap D(A_\Ee)$. This $\psi$
		 is given by
	\begin{align*}
	\psi=\int_{\Ge}r(\cdot,s;\B,A_\Ee)f(s) ds,
	\end{align*}
		where
	\begin{equation}\label{eq:ker-r}
r(t,s;\B,A_\Ee):=r_0(t,s;A_\Ee)+r_1(t,s;\B,A_\Ee)
	\end{equation}
	with $r_0(t,s;A_\Ee)$ given by \eqref{eq:r0} and
	\begin{align*}
	 r_1(t,s;\B,A_\Ee): = e^{\tu A} (\mathds{1}-\B \, e^{\au A})^{-1}\B e^{(\au-\su)A}.
	\end{align*} 
\end{proposition}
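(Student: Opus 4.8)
The plan is to reduce the coupled problem to the decoupled case $\B=0$, where maximal $L^2$-regularity on each interval is classical, and then to repair the coupling by adding a homogeneous solution whose initial trace is pinned down by inverting $\mathds{1}-\B\,e^{\au A}$ on $\Ke_A$. First I would record the building block: for $\B=0$ the boundary condition in \eqref{eq:firstCP} decouples into $\psi_j(0)=0$, $j\in\Ie$, and since each $-A_j$ is sectorial of angle $<\pi/2$ on a Hilbert space it has maximal $L^2$-regularity. Hence $P(0)=D_t(0)-A_\Ee$ is boundedly invertible (injectivity being immediate from $\psi_j(t)=e^{tA_j}\psi_j(0)$ together with $\psi_j(0)=0$), and its inverse is the integral operator with kernel $r_0$ from \eqref{eq:r0}; concretely $\psi^0:=\int_\Ge r_0(\cdot,s;A_\Ee)f(s)\,ds$ solves $(\partial_t-A_j)\psi^0_j=f_j$ edgewise, lies in $D(D_t^{\max})\cap D(A_\Ee)$, and satisfies $\underline{\psi^0}_-=0$. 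By the trace embedding \eqref{eq:traceembedding} its terminal trace $w:=\underline{\psi^0}_+$ lies in $\Ke_A$, and the map $f\mapsto w$ is bounded from $L^2(\Ge,\au;\cX)$ into $\Ke_A$.

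Next I would produce the homogeneous corrector. Any edgewise solution of $(\partial_t-A_j)\psi^h_j=0$ has the form $\psi^h_j(t)=e^{tA_j}c_j$, i.e.\ $\psi^h=e^{\tu A}c$ with $c=\underline{\psi^h}_-$; by the trace-space characterization of admissible initial data (the $p=2$ case recalled in Section~\ref{sec:classicalee}), such $\psi^h$ belongs to the maximal $L^2$-regularity space if and only if $c\in\Ke_A$, and then $c\mapsto\psi^h$ is bounded from $\Ke_A$ into that space. Imposing the full coupling $\underline{\psi}_-=\B\underline{\psi}_+$ on $\psi=\psi^0+\psi^h$ gives $c=\B(w+e^{\au A}c)$, that is $(\mathds{1}-\B\,e^{\au A})c=\B w$. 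Here $\B$ preserves $\Ke_A$ by compatibility (Definition~\ref{def:compatiblebc_trace}) and $e^{\au A}$ maps $\Ke_A$ into itself, so this is an equation in $\Ke_A$, and the standing hypothesis that $(\mathds{1}-\B\,e^{\au A})\vert_{\Ke_A}$ is boundedly invertible yields the unique $c=(\mathds{1}-\B\,e^{\au A})^{-1}\B w\in\Ke_A$. Substituting back gives $\psi^h=e^{\tu A}(\mathds{1}-\B\,e^{\au A})^{-1}\B w$, and since $w=\int_\Ge e^{(\au-\su)A}f(s)\,ds$ this is exactly $\int_\Ge r_1(\cdot,s;\B,A_\Ee)f(s)\,ds$, so that $\psi=\psi^0+\psi^h$ coincides with the operator of kernel $r=r_0+r_1$ in \eqref{eq:ker-r}.

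It then remains to assemble the three conclusions. For existence, $\psi\in D(D_t^{\max})\cap D(A_\Ee)$ satisfies $\underline{\psi}_-=\B\underline{\psi}_+$ by construction, hence $\psi\in D(D_t(\B))\cap D(A_\Ee)=D(P(\B))$ and $P(\B)\psi=f$. For uniqueness, if $P(\B)\psi=0$ then $\psi_j(t)=e^{tA_j}c_j$ with $c=\underline{\psi}_-\in\Ke_A$, and the coupling forces $(\mathds{1}-\B\,e^{\au A})c=0$, whence $c=0$ and $\psi=0$ by invertibility on $\Ke_A$. For bounded invertibility, composing the bounded maps $f\mapsto\psi^0$ (maximal regularity), $f\mapsto w$ (trace embedding), $(\mathds{1}-\B\,e^{\au A})^{-1}\B$ on $\Ke_A$, and $c\mapsto e^{\tu A}c$ (trace-space characterization) exhibits $f\mapsto\psi=P(\B)^{-1}f$ as bounded into the maximal regularity space, which is the asserted solution formula.

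The hard part will be the trace-space bookkeeping rather than any single estimate: one must guarantee that the corrector's boundary value $c$ lands in $\Ke_A$ and not merely in $\Ke$, for otherwise $e^{\tu A}c$ would only lie in $D(D_t^{\max})$ and fail to belong to $D(A_\Ee)\cap D(D_t(\B))$. This is precisely what the two hypotheses are tailored to secure, by combining the forward direction of the trace embedding \eqref{eq:traceembedding} (which delivers $w\in\Ke_A$) with the backward trace-space characterization of admissible initial data (which converts $c\in\Ke_A$ into the required regularity of $e^{\tu A}c$); the invertibility of $\mathds{1}-\B\,e^{\au A}$ must be used at the level of $\Ke_A$ throughout, and I expect that checking $e^{\au A}$ indeed leaves $\Ke_A$ invariant is the one routine point that still needs a short verification.
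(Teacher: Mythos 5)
Your construction of the solution formula is essentially the paper's: both arguments write the solution edgewise by variation of constants, impose the coupling $\underline{\psi}_-=\B\underline{\psi}_+$ to get the vector equation $(\mathds{1}-\B e^{\au A})c=\B\int_\Ge e^{(\au-\su)A}f(s)\,ds$, and invert on $\Ke_A$ to identify the kernel $r=r_0+r_1$. Where you genuinely diverge is in establishing that this right inverse is also a left inverse, i.e.\ uniqueness: the paper proves it by a duality argument, writing down the adjoint kernel $r(\cdot,\cdot)^*$ as the Green's function of the time-reversed problems $(-\partial_t-A_j^*)\psi_j=f_j$, checking that it is a right inverse of $P(\B)^*$, and then taking adjoints. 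You instead argue directly that any $\psi\in D(P(\B))$ with $P(\B)\psi=0$ must be of the form $e^{\tu A}c$ edgewise, that $c\in\Ke_A$ by the trace embedding \eqref{eq:traceembedding}, and that the coupling forces $(\mathds{1}-\B e^{\au A})c=0$, hence $c=0$. This is more elementary and is in fact the argument the paper itself deploys later for the uniqueness part of Theorem~\ref{thm:main}; the paper's adjoint route has the side benefit of exhibiting the Green's function of the dual (time-reversed) problem explicitly, which your argument does not produce. Your trace-space bookkeeping (that $f\mapsto\underline{\psi^0}_+$ is bounded into $\Ke_A$ via \eqref{eq:traceembedding}, that $e^{\au A}$ maps $\Ke_A$ into $D(A_\cV)\subset\Ke_A$ as noted in Remark~\ref{rem:invertibilityinKe}, and that $c\mapsto e^{\tu A}c$ is bounded from $\Ke_A$ into the maximal regularity space) is exactly what is needed and matches the regularity discussion the paper carries out in the proofs of Theorem~\ref{thm:main} and Proposition~\ref{prop:Lpmaxreg}. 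I see no gap.
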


\begin{remark}\label{rem:invertibilityinKe}
	\begin{enumerate}[(a)]
		\item Note that $e^{\au A}\colon \Ke_A \rightarrow D(A_\cV)\subset \Ke_A$, and therefore if $\B$ is compatible with $\Ke_A$, then also $\mathds{1}-\B \, e^{\au A}$ is compatible with $\Ke_A$. 
		\item Moreover if $\mathds{1}-\B \, e^{\au A}\in \cL(\Ke)$ and $(\mathds{1}-\B \, e^{\au A})\vert_{\Ke_A}\in \cL(\Ke_A)$ hold, then $(\mathds{1}-\B \, e^{\au A})\vert_{\Ke_A}^{-1}\in \cL(\Ke_A)$ implies that 
		$(\mathds{1}-\B \, e^{\au A})^{-1}\in \cL(\Ke)$. To this end, knowing that $(\mathds{1}-\B \, e^{\au A})\vert_{\Ke_A}$ is closable in $\Ke$, it is sufficient  to prove that $(\mathds{1}-\B \, e^{\au A})\vert_{\Ke_A}^{-1}$ is closable in $\Ke$, cf.\ \cite[Lemma~2.28]{Git2012}. Now let
		\begin{align*}
		(\underline{\psi}_n)_{n\in \N}\subset \Ke_A \quad \hbox{with } \underline{\psi}_n \to 0 \hbox{ and } (\mathds{1}-\B \, e^{\au A})\vert_{\Ke_A}^{-1}\underline{\psi}_n \to \underline{\varphi} \hbox{ in } \Ke \hbox{ as } n\to \infty.
		\end{align*}
		Then since $(\mathds{1}-\B \, e^{\au A})\in \cL(\Ke)$
		  one has  
		 \begin{align*}
		 (\mathds{1}-\B \, e^{\au A})(\mathds{1}-\B \, e^{\au A})\vert_{\Ke_A}^{-1}\underline{\psi}_n=\psi_n \to (\mathds{1}-\B \, e^{\au A})\underline{\varphi}, 
		 \end{align*} 
		and since $\underline{\psi}_n \to 0$ the claim follows.
	\end{enumerate}
\end{remark}

\begin{proof}[Proof of Proposition~\ref{prop:resolvent}]
A solution to the equation \eqref{eq:firstCP} is on each edge $i \in \Ie$ of the form 
\begin{equation}\label{eq:boch}
\psi_i(t_i)= e^{A_it_i}c_i + \int_0^{t_i} e^{A_i(t_i-s_i)}f(s_i) d s_i, 
\end{equation}
for some vector $c_i\in X_i$ that is ``inherited'' from the final state in the preceding edges. Indeed, the boundary condition can be used to determine $c_i$. Since
\begin{align*}
\underline{\psi}_-= \{c_i\}_{i\in \Ie} \quad \hbox{and} \quad \underline{\psi}_+= \{e^{A_ia_i}c_i + \int_0^{a_i} e^{A_i(a_i-s_i)}f(s_i) d s_i\}_{i\in \Ie},
\end{align*}
recalling that $a_i$ denotes the length of the edge $i$,
the condition $\underline{\psi}_-=\B \underline{\psi}_+$ gives
\begin{align*}
c_{i} = \sum_{j=1}\B_{ij} ( e^{A_ja_j}c_j + \int_0^{a_j} e^{(a_j-s_j)A_j}f(s_j) d s_j).
\end{align*}
Hence we obtain the vector-valued identity for $c=\{c_i\}_{i\in \Ie}$
\begin{align*}
(\mathds{1}-\B \, e^{\au A})c= \B \{\int_0^{\au} e^{(\au-\su)A}f(s) d s\},
\end{align*}
and because $\mathds{1}-\B \, e^{\au A}$ is assumed to be invertible
\begin{align*}
c= (\mathds{1}-\B \, e^{\au A})^{-1}\B \{\int_0^{\au} e^{(\au-\su)A}f(s) d s\},
\end{align*}
whence
\[
\psi(t)= \int_0^{\au} e^{A\underline{t}}(\mathds{1}-\B \, e^{\au A})^{-1}\B e^{(\au-\su)A}f(s) ds + \int_0^{\underline{t}} e^{A(\tu-\su)}f(s) ds.
\]
Recall that the Green's function is the resolvent operator's integral kernel, i.e., a function $r(t,s):=r(t,s;\B,A_\Ee)$ such that 
it defines a left and right inverse of $P(\B)$, i.e.,
\begin{itemize}
\item[(a)] $\varphi(t)=(D_t(\B)-A_\Ee)\int_{\Ge} r(t,s)\varphi(s) ds$ for $\varphi\in L^2(\Ge,\au;\cX)$,
\item[(b)] $\psi(t)=\int_{\Ge}r(t,s) (D_t(\B)-A_\Ee)\psi(s) ds$ for $\psi\in D(P(\B))$.
\end{itemize}

	First, note that 
	\begin{equation}\label{eq:psi0psi1}
	\psi_{f}^0:= \int_{\Ge}r_0(\cdot,s;A)f(s) ds \quad \hbox{and}\quad \psi_{f}^1:= \int_{\Ge}r_{1}(\cdot,s;\B,A)f(s) ds
		\end{equation}
	solve $(\partial_t-A_j)(\psi_{f}^0)_j=f_j$ and $(\partial_t-A_j)(\psi_{f}^1)_j=0$ on each edge $j\in \Ie$, respectively, where one applies the classical variation of constants formula
	and the properties of the semigroups $e^{t_jA_j}$. Hence $\psi=\psi_f^0+\psi_f^1$ solves $(\partial_t-A_j)\psi_j=f_j$ on each edge with $\psi\in D(P^{\max})$. Here, $\psi_f^1$ is the correction term for the variation of constants term $\psi_f^0$ assuring that the boundary conditions are satisfied. 
		
Secondly, one has to prove that $\psi$ satisfies the boundary conditions, and indeed
\begin{align*}
\underline{\psi}_-&=\underline{\psi_1}_-= (\mathds{1}-\B \, e^{\au A})^{-1}\B \int_{\Ge} e^{A(\au-\su)} f(s) ds, \\
-\B\underline{\psi}_+&= -\B \int_{\Ge} e^{(\au-\su)A} f(s) ds - \B e^{\au A} (\mathds{1}-\B \, e^{\au A})^{-1}\B \int_{\Ge} e^{(\au-\su)A} f(s) ds,
\end{align*} 
hence $\underline{\psi}_- - \B\underline{\psi}_+=0$. We conclude that $\int_{\Ge} r(\cdot,s;\B,A)\cdot ds$ is the right inverse of $P(\B)$.

Because the adjoint kernels 
\begin{equation}\label{eq:rokern2}
	\{r_0(t,s;A_\Ee)^*\}_{j,l}:= \begin{cases}e^{(s_j-t_j)A_j^\ast} & \hbox{if }j=l \hbox{ and } t_j< s_j, \\ 0  &\hbox{otherwise},
	\end{cases} \quad  t_j,s_l\in\bigsqcup_{j\in \Ie \cup \Ee } I_j
\end{equation}
consist of the Green's function for the time-reversed problems
\begin{align*}
(-\partial_{t_j} -A_j^*) \psi_j =f_j, \quad \psi_j(a_j)=0,\quad j\in \Ie,
\end{align*} 
and since $ (-\partial_{t_j} -A_j^*) e^{(a_j-t_j)A^*} =0$ one has
\[
(-D_t -A^*)\int_{\Ge}r_1(s,t;\B,A)^*f(s) ds=0
\]
 with
\begin{align*}
r_1(s,t;\B,A)^* = e^{(\au-\tu)A^*} \B^*(\mathds{1}-e^{\au A^*} \, \B^*)^{-1} e^{\su A^*}
\end{align*}
and concerning the boundary conditions
\begin{align*}
\underline{\psi}_+&=\underline{\psi_1}_+= \B^*(\mathds{1}-e^{\au A^*} \, \B^*)^{-1} \int_{\Ge} e^{\su A^*} f(s) ds, \\
-\B^*\underline{\psi}_-&= -\B^* \int_{\Ge} e^{\su A^*} f(s) ds - \B^* e^{\au A^*} \B^*(\mathds{1}-e^{\au A^*} \, \B^*)^{-1} \int_{\Ge}e^{\su A^*} f(s) ds,
\end{align*} 
hence $\underline{\psi}_+ - \B^*\underline{\psi}_-=0$.

To conclude, note that $\mathds{1}- e^{A^*\au}\,\B^*$ is invertible if and only if so is its adjoint $\mathds{1}-\B \, e^{\au A}$. We have thus proven that the adjoint of $\int_{\Ge} r(\cdot,s;\B,A)\cdot ds$ is the right inverse of $P(\B)^*$: we hence take adjoints and find $\int_{\Ge} r(\cdot,s;\B,A)\cdot ds \, P(\B)= \mathds{1}_{D(P(\B))}$. We conclude that $\int_{\Ge}r(\cdot,s;\B,A)\ ds$ is also the left inverse of $P(\B)$.
\end{proof}

\begin{remark}\label{rem:conddelio}
Two sufficient conditions for invertibility of $\mathds{1}-\B e^{\au A}$ are that each $A_i$ are m-dissipative and $\B$ is a strict contraction; or else that each $A_i+\epsilon$ is m-dissipative for some $\epsilon>0$ and $\B$ is a contraction.
\end{remark}

For general $\B$ the resolvent of $D_t(\B)$ can be obtained applying Proposition~\ref{prop:resolvent} for $A_j:=\lambda \mathds{1}_{X_j}$,  $\lambda\in\C$ which induces an operator $A_{\Ee}=\lambda_{\Ee}$.
 \begin{corollary}[Resolvent of $D_t(\B)$]
 	\label{prop:resolvent1}
Let $\Ee=\emptyset$ and $\B\in \cL(\Ke)$. 
If $\mathds{1}-\B e^{\lambda\au}$ is  invertible in $\Ke$, then $\lambda\in \rho(D_t(\B))$ and
 	the unique solution $\psi\in D(D_t(\B))$ to 
 	\begin{align*}
 	(D_t(\B)-\lambda) \psi =f\quad \hbox{for} \quad f\in L^2(\Ge,\au;\cX)
 	\end{align*}
 	is given by $ 	\psi=\int_{\Ge}r(\cdot,s;\B,\lambda_{\Ee})f(s) ds$.
 \end{corollary}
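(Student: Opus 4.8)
The plan is to reduce the statement to Proposition~\ref{prop:resolvent} by specializing the spatial operators to scalar multiples of the identity. Concretely, I would set $A_j := \lambda\mathds{1}_{X_j}$ for every $j\in\Ie$, so that the induced operator $A_\Ee$ on $L^2(\Ge,\au;\cX)$ is $\lambda_\Ee$, i.e.\ multiplication by $\lambda$, and consequently
\begin{align*}
P(\B)=D_t(\B)-A_\Ee=D_t(\B)-\lambda.
\end{align*}
Each $A_j$ generates the uniformly continuous group $(e^{\lambda t}\mathds{1}_{X_j})_{t\in\R}$, so on each edge the semigroup appearing in the Green's function is $e^{t_jA_j}=e^{\lambda t_j}\mathds{1}_{X_j}$ and the diagonal block operator $e^{\au A}\in\cL(\Ke)$ becomes $e^{\lambda\au}$. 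Thus the operator $\mathds{1}-\B\,e^{\au A}$ occurring in Proposition~\ref{prop:resolvent} is exactly the operator $\mathds{1}-\B e^{\lambda\au}$ from the hypothesis.

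Next I would check that the structural hypotheses of Proposition~\ref{prop:resolvent} trivialize in this scalar setting. Since $D(A_j)=X_j$, the trace space is $\Ke_A=\bigoplus_{j\in\Ie}[X_j,X_j]_{1/2}=\Ke$; hence every $\B\in\cL(\Ke)$ is automatically compatible with $\Ke_A$ in the sense of Definition~\ref{def:compatiblebc_trace}, and the restriction $(\mathds{1}-\B\,e^{\au A})\vert_{\Ke_A}$ coincides with $\mathds{1}-\B e^{\lambda\au}$ on all of $\Ke$, which is boundedly invertible by assumption. Likewise $D(A_\Ee)=L^2(\Ge,\au;\cX)$, so $D(P(\B))=D(D_t(\B))\cap D(A_\Ee)=D(D_t(\B))$, and solving $(D_t(\B)-\lambda)\psi=f$ with $\psi\in D(D_t(\B))$ is precisely the time-graph Cauchy problem \eqref{eq:firstCP} for these $A_j$.

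The one point that needs care -- and the only genuine obstacle -- is that Assumption~\ref{ass:D1L2_2} requires $-A_j=-\lambda\mathds{1}_{X_j}$ to be sectorial of angle less than $\pi/2$, which for a scalar operator forces $\Re\lambda<0$ (or $\lambda=0$); yet the corollary is asserted for all $\lambda\in\C$ subject only to invertibility of $\mathds{1}-\B e^{\lambda\au}$, in particular for $\lambda$ in the right half-plane (which belongs to $\rho(D_t(\B))$ whenever $\B$ is a contraction, cf.\ Remark~\ref{rem:spectrum}). The resolution is that this sectoriality hypothesis entered the proof of Proposition~\ref{prop:resolvent} only to guarantee that the edge semigroups are analytic and that the trace embedding \eqref{eq:traceembedding} holds. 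Here $e^{\lambda t}\mathds{1}_{X_j}$ is entire, hence trivially an analytic semigroup, and the trace space is the full space, so both are automatic for every $\lambda\in\C$. I would therefore re-read the Green's function computation in that proof: it consists of the variation-of-constants representation \eqref{eq:boch} on each edge together with the purely algebraic matching of $\underline{\psi}_-=\B\underline{\psi}_+$ through $(\mathds{1}-\B e^{\lambda\au})c=\B\{\int_0^{a_j}e^{\lambda(a_j-s_j)}f_j(s_j)\,ds_j\}_{j\in\Ie}$, none of which is sensitive to the sign of $\Re\lambda$. It thus applies verbatim and shows that $P(\B)=D_t(\B)-\lambda$ is boundedly invertible with inverse the integral operator of kernel $r(\cdot,s;\B,\lambda_\Ee)$. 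Since bounded invertibility of $D_t(\B)-\lambda$ is by definition the statement that $\lambda\in\rho(D_t(\B))$ with the corresponding resolvent, the claim follows.
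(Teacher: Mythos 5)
Your proposal is correct and follows exactly the paper's route: the corollary is obtained by applying Proposition~\ref{prop:resolvent} with $A_j:=\lambda\mathds{1}_{X_j}$, so that $A_\Ee=\lambda_\Ee$, $e^{\au A}=e^{\lambda\au}$, and the trace space collapses to $\Ke$ with compatibility automatic. Your extra observation that the sectoriality hypothesis of Assumption~\ref{ass:D1L2_2} would literally restrict $\lambda$ to a left half-plane, and that the Green's function computation nevertheless goes through for all $\lambda\in\C$, is a point the paper silently glosses over, and your resolution of it is sound.
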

The inverse of the parabolic operator $P(\B)$ can be seen as being given by a functional calculus where the spectral parameter in Corollary~\ref{prop:resolvent1} is replaced by the operator $A$. This is akin to the case of classical semigroups, where the solution operator of the ordinary differential equation $(\partial_t -\lambda)\psi=f, \quad \psi(0)=\psi_0,$
is considered, and semigroup theory -- interpreted as functional calculus for the exponential functions -- allows one to ``replace $\lambda$ by some generator $A$''.

\subsection{Inhomogeneous boundary conditions}
So far, we have implicitly  
focused on the case of 0-boundary conditions imposed on \textit{sources} of the time-graph, i.e., at the initial endpoints of those time branches that have no predecessors. This is clearly a relevant limitation and would e.g.\ lead to identically vanishing solutions as soon as $f\equiv 0$.
Initial conditions can be introduced by interpreting them as inhomogeneous boundary conditions with respect to time. Thus one considers the problem
\begin{equation}\label{eq:firstCPLP}
\left\{
\begin{split}
(\partial_t - A_j)\psi_j(t_j) &=f_j(t_j),\qquad t_j\in\bigsqcup_{j\in \Ie \cup \Ee } I_j\\
 \underline{\psi}_- - \B \underline{\psi}_+&=\gu
\end{split}
\right.
\end{equation}
for given $\gu\in \Ke$ and $f\in L^2(\Ge,\au;\cX)$.
For $\B=0$ this corresponds to the usual initial condition $\psi(0)=\psi_{0}$.

The solution to this problem can be computed using the Green's function, where
-- as for ordinary differential equations with inhomogeneous boundary conditions -- the \textit{Lagrange-identity} \eqref{intbp} plays an important role. We start with a heuristic argument.
Integration by parts yields
\begin{align*}
\int_{\Ge} \left[(\partial_s r(t,s;A_\Ee)) \psi(s) + r(t,s;A_\Ee)\partial_s \psi(s)\right] ds 
= [r(t,\cdot;A_\Ee) \psi(\cdot)]_{\partial \Ge}
\end{align*} 
where 
\begin{align*}
[r(t,\cdot;A_\Ee) \psi(\cdot)]_{\partial \Ge}
= (r(t,a_j;A_\Ee) \psi(a_j) - r(t,0;A_\Ee) \psi(0))_{j\in \Ie}.
\end{align*}
Due to the properties of the Green's function
\begin{align*}
	\int_{\Ge} (\partial_s r(t,s;A_\Ee)) \psi(s) ds & = \int_{\Ge} r(t,s;A_\Ee)(-A_\Ee)\psi(s) ds \\ \int_{\Ge} (\partial_t r(t,s;A_\Ee)) \psi(s) ds &= \int_{\Ge} A_\Ee r(t,s;A_\Ee)\psi(s) ds.
\end{align*} 
Hence 
\begin{multline*}
\int_{\Ge} (\partial_s r(t,s;A_\Ee)) \psi(s) + r(t,s;A_\Ee)\partial_s \psi(s) ds 
= \int_{\Ge} A_\Ee^{-1}((\partial_t-A_\Ee) r(t,s;A_\Ee)) (-A_\Ee\psi(s)) \\+ r(t,s;A_\Ee)(\partial_s -A_\Ee) \psi(s) 
- A_\Ee^{-1}A_\Ee r(t,s;A_\Ee)) (-A_\Ee\psi(s))
- r(t,s;A_\Ee)A_\Ee \psi(s) 
ds \\ 
= -\psi(t) + \int_{\Ge}r(t,s;A_\Ee) f(s) ds,
\end{multline*} 
where one uses that
\begin{align*}
\int_{\Ge} (\partial_t -A_\Ee)r(t,s;A_\Ee) \psi(s) ds = \psi(t) \quad \hbox{and} \quad (\partial_s -A_\Ee) \psi(s) = f(s),
\end{align*} 
assuming that $\psi$ solves the Cauchy problem.
Hence
\begin{align*}
\psi(t)= \psi_{0}(t) +\int_{\Ge} r(t,s;A_\Ee) f(s) ds, \quad \psi_0(t)=-[r(t,\cdot;A_\Ee)\psi(\cdot)]_{\partial \Ge},
\end{align*} 
where $ \psi_0$ is given more explicitly by
\begin{align*}
 \psi_0(t)
 = e^{A\tu} \underline{\psi}_- + e^{\tu A} (\mathds{1}-\B \, e^{\au A})^{-1}\B e^{\au A}\underline{\psi}_- - e^{\tu A} (\mathds{1}-\B \, e^{\au A})^{-1}\B \underline{\psi}_+.
\end{align*} 
For $\gu:= \underline{\psi}_- -\B\underline{\psi}_+$
one obtains $\psi_0(t)
= e^{A\tu} [\mathds{1} + (\mathds{1}-\B \, e^{\au A})^{-1}\B e^{\au A}]\gu = e^{\tu A}(\mathds{1}-\B \, e^{\au A})^{-1}\gu$.

\begin{theorem}\label{thm:main}
Let Assumption~\ref{ass:D1L2_2} be fulfilled and let $\B$ be compatible with $\Ke_A$. If $(\mathds{1}-\B \, e^{\au A})\vert_{\Ke_A}$ is boundedly invertible in $\Ke_A$,
then for any $\gu\in \Ke_A$ and $f\in L^2(\Ge,\au;\cX)$ there is a unique solution
		\begin{align*}
		\psi\in W^{1,2}(\Ge;\au;\cX) \cap L^2(\Ge,\au;D(A_\Ee))
		\end{align*}
		 to \eqref{eq:firstCPLP}.
The solution is given by
		\begin{equation}\label{eq:psivariat}
		\psi(t) = \psi_0(t)+\int_{\Ge} r(t,s;\B,A_\Ee)f(s) ds,\qquad t\in (\Ge,\au)
		\end{equation}
		where the kernel $r(\cdot,\cdot;\B,A)$ is given in~\eqref{eq:ker-r}, and 
		\begin{align*}
		\psi_0(t)
		:= e^{\tu A}(\mathds{1}-\B \, e^{\au A})^{-1}\gu,\qquad t\in (\Ge,\au).
		\end{align*} 
		In particular there exists a constant $C$ independent of $f$ and $\gu$ such that
		\begin{align*}
		\norm{D_t\psi}_{L^2(\Ge,\au;\cX)} + \norm{A_\Ee\psi}_{L^2(\Ge,\au;\cX)} \leq C (\norm{f}_{L^2(\Ge,\au;\cX)} + \norm{\gu}_{\Ke_A}).
		\end{align*}
\end{theorem}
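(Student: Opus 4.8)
The plan is to use linearity to split \eqref{eq:firstCPLP} into a forced problem with homogeneous boundary data and an unforced problem carrying the inhomogeneity $\gu$. Accordingly I would write $\psi = \psi_f + \psi_0$ with $\psi_f := \int_\Ge r(\cdot,s;\B,A_\Ee)f(s)\,ds$ and $\psi_0(t) := e^{\tu A}(\mathds{1}-\B\,e^{\au A})^{-1}\gu$. The forced part is entirely covered by Proposition~\ref{prop:resolvent}: under exactly the present hypotheses it yields $\psi_f\in D(D_t(\B))\cap D(A_\Ee)$, shows that $\psi_f$ solves $(\partial_t-A_j)(\psi_f)_j = f_j$ edge-wise with homogeneous boundary data $\underline{(\psi_f)}_- - \B\underline{(\psi_f)}_+ = 0$, and — together with the closedness of the parabolic operator — provides the maximal-regularity bound $\norm{D_t\psi_f}_{L^2(\Ge,\au;\cX)} + \norm{A_\Ee\psi_f}_{L^2(\Ge,\au;\cX)}\le C\norm{f}_{L^2(\Ge,\au;\cX)}$.

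It then remains to analyse $\psi_0$. First I would verify that $c := (\mathds{1}-\B\,e^{\au A})^{-1}\gu$ is well-defined in the trace space: since $\gu\in\Ke_A$ and $(\mathds{1}-\B\,e^{\au A})\vert_{\Ke_A}$ is boundedly invertible in $\Ke_A$ by assumption, we get $c\in\Ke_A$ with $\norm{c}_{\Ke_A}\le C\norm{\gu}_{\Ke_A}$. On each branch $\psi_{0,j}(t) = e^{tA_j}c_j$, and since $-A_j$ generates an analytic semigroup this solves the homogeneous equation $(\partial_t-A_j)\psi_{0,j}=0$. The boundary condition is checked by direct evaluation of the traces: $\underline{(\psi_0)}_- = c$ and $\underline{(\psi_0)}_+ = e^{\au A}c$, so that $\underline{(\psi_0)}_- - \B\underline{(\psi_0)}_+ = (\mathds{1}-\B\,e^{\au A})c = \gu$, as required.

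The regularity of $\psi_0$ is where the real work lies. Because $c_j\in[X_j,D(A_j)]_{1/2} = (X_j,D(A_j))_{1/2,2}$ is precisely the trace space of the $L^2$-maximal-regularity space (using $[\cdot,\cdot]_{1/2}=(\cdot,\cdot)_{1/2,2}$ as in Remark~\ref{rem-s6}(a)), the classical statement recalled in Section~\ref{sec:classicalee} — that for sectorial $-A_j$ and initial data in the trace space the orbit $e^{\cdot A_j}c_j$ lies in $W^{1,2}(I_j;X_j)\cap L^2(I_j;D(A_j))$, cf.\ \cite[\S 3.4]{PruessSimonett} — applies on each edge and gives $\norm{\partial_t e^{\cdot A_j}c_j}_{L^2(I_j;X_j)} + \norm{A_j e^{\cdot A_j}c_j}_{L^2(I_j;X_j)}\le C\norm{c_j}_{[X_j,D(A_j)]_{1/2}}$. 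Summing over the finitely many edges then yields $\psi_0\in W^{1,2}(\Ge,\au;\cX)\cap L^2(\Ge,\au;D(A_\Ee))$ together with $\norm{D_t\psi_0}_{L^2(\Ge,\au;\cX)} + \norm{A_\Ee\psi_0}_{L^2(\Ge,\au;\cX)}\le C\norm{c}_{\Ke_A}\le C\norm{\gu}_{\Ke_A}$.

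Finally I would assemble the pieces: by linearity $\psi = \psi_f + \psi_0$ solves \eqref{eq:firstCPLP} with boundary value $\gu$ (the source being carried by $\psi_f$ and the inhomogeneous boundary data by $\psi_0$), and adding the two estimates gives the claimed bound $\norm{D_t\psi}_{L^2(\Ge,\au;\cX)} + \norm{A_\Ee\psi}_{L^2(\Ge,\au;\cX)}\le C(\norm{f}_{L^2(\Ge,\au;\cX)} + \norm{\gu}_{\Ke_A})$. Uniqueness is immediate from Proposition~\ref{prop:resolvent}: the difference of two solutions solves the homogeneous-boundary problem \eqref{eq:firstCP} with $f=0$, hence lies in $D(P(\B))$ with $P(\B)(\psi_1-\psi_2)=0$, so invertibility of $P(\B)$ forces $\psi_1=\psi_2$. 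The main obstacle I anticipate is staying at the level of the trace space $\Ke_A$ rather than all of $\Ke$: the maximal-regularity-from-trace-data statement requires $c\in\Ke_A$, not merely $c\in\Ke$, and securing this — so that $\psi_0$ attains the correct $W^{1,2}\cap L^2(D(A_\Ee))$-regularity — is exactly the role played by the compatibility of $\B$ with $\Ke_A$ together with the invertibility of $(\mathds{1}-\B\,e^{\au A})\vert_{\Ke_A}$ in $\Ke_A$.
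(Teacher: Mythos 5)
Your proposal is correct and follows essentially the same route as the paper: decompose $\psi=\psi_f+\psi_0$, invoke Proposition~\ref{prop:resolvent} for the forced part, obtain the regularity of $\psi_0$ from the trace-space characterization in \cite[\S~3.4]{PruessSimonett}, check the boundary condition by direct evaluation of the traces, and conclude uniqueness from invertibility. The only cosmetic difference is that you phrase uniqueness via the bounded invertibility of $P(\B)$ rather than, as the paper does, via the representation $\psi_\delta=e^{tA}\gu'$ and the injectivity of $\mathds{1}-\B\,e^{\au A}$; these are equivalent here.
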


\begin{proof}
Note that $\psi_0\in \in W^{1,2}(\Ge;\au;\cX) \cap L^2(\Ge,\au;D(A_\Ee))$ since $(\mathds{1}-\B \, e^{\au A})^{-1}\gu\in \Ke_A$ and all $-A_j$ are sectorial, cf.\  
 \cite[\S~3.4]{PruessSimonett} and in particular \cite[Prop. 3.4.2]{PruessSimonett} which can be adapted to finite intervals. Its traces satisfy
\begin{align*}
\underline{\psi_0}_-&= \gu + (\mathds{1}-\B \, e^{\au A})^{-1}\B e^{\au A}\gu,\\
\underline{\psi_0}_+&=
e^{\au A} \gu + e^{ \au A} (\mathds{1}-\B \, e^{\au A})^{-1}\B e^{\au A}\gu,
\end{align*}
 and hence $\underline{\psi_0}_- -\B\underline{\psi_0}_+=\gu$,
   and $(\psi_0)_j$ solves $(\partial_{t_j}-A_j)(\psi_0)_j=0$ on each edge $j\in \Ie$.

To prove uniqueness assume that there is another solution $\psi_0'$ to \eqref{eq:firstCPLP} in the solution space,
and consider the difference
$
\psi_{\delta}:=\psi_0'-\psi_0
$ which
 solves due to the linearity of the equation
\begin{equation*}
\left\{
\begin{split}
(\partial_t - A_j)(\psi_\delta)_j &=0,\qquad t_j\in\bigsqcup_{j\in \Ie \cup \Ee } I_j, \quad j\in \Ie \cup \Ee,\\
\underline{\psi_\delta}_- - \B \underline{\psi_\delta}_+&=0.
\end{split}
\right.
\end{equation*}
Because of the former equation, there exists $\gu'\in \Ke_A$ such that $\psi_{\delta}=e^{tA}\gu'$, while the latter implies
\begin{align*}
(\mathds{1}-\B e^{\au A})\gu'=0, \end{align*}
 and by the invertibility of $\mathds{1}-\B e^{\au A}$ it follows that $\psi_\delta\equiv 0$.
Hence the inhomogeneous boundary value problem is uniquely solvable with $\psi_0$ in the maximal $L^2$-regularity class, and the rest of the statement follows from Proposition~\ref{prop:resolvent}.
\end{proof}

\begin{remark}
	\begin{itemize}	
		\item[(a)] The solution formula given in Theorem~\ref{thm:main} is a generalization of the well-known variation of constants formula. Considering only one interval $[0,a]$ with boundary conditions $u(0)=0$, i.e. $\B=0$, we find that $\psi_0(t) = e^{tA} \underline{\psi}_{0}$ and $r(t,s;\B,A_\Ee)=r_0(t,s;A_\Ee)$.
  \item[(b)] Another classical case are periodic boundary conditions. For one interval $[0,a]$ with $u(0)=u(a)$, i.e., $\B=1$.
		\item[(c)] The solution to the time-graph Cauchy problem certainly satisfies a semigroup law on each edge. 
	\end{itemize}
\end{remark}

\subsection{Mapping properties}
Assume that $X_j=L^2(S_j,\mu_j)$ for some measure space $(S_j,\Sigma_j,\mu_j)$ are spaces of complex valued functions, and denote by $X_{j,\R}$ the cone of real valued functions. This induces spaces $\Ke_{\R}$ and $\Ke_{\R}^2$. 
\begin{proposition}\label{prop:mapp}
	Let the assumptions of Theorem~\ref{thm:main} be satisfied and let $X_j=L^2(S_j,\mu_j)$ be Hilbert spaces of complex valued functions.
	\begin{itemize}
		\item[(a)] If $\B$ and the operator families $(e^{t_jA_j})_{t_j\in I_j}$ leave $\Ke_{\R}$ invariant, 
		then the solution $\psi$ in Theorem~\ref{thm:main} is real for real data $\gu$ and $f$.
		\item[(b)] If in addition to $(a)$ the operators $\B$, $(\mathds{1}-\B^{\au A})^{-1}$ and the operator families $(e^{t_jA_j})_{t_j\in I_j}$ are positivity preserving, then the solution $\psi$ in Theorem~\ref{thm:main} is positive for all times $t\in (\Ge,\au)$ provided the data $\gu$ and $f$ are positive.
		\item[(c)] If the operator families $(e^{t_jA_j})_{t_j\in I_j}$ 
	 as well as $\B$ and $(\mathds{1}-\B \, e^{\au A})^{-1} $ are $L^{\infty}$-bounded, then the solution operator in Theorem~\ref{thm:main} is $L^{\infty}$-bounded. The solution operator defined by $\psi_0$ is $L^\infty$-contractive whenever so are the operator families $(e^{t_jA_j})_{t_j\in I_j}$ and $(\mathds{1}-\B \, e^{\au A})^{-1} $. If additionally $\B$ and $(\mathds{1}-\B \, e^{\au A})^{-1} $ are $L^1$-bounded, then the solution operator extrapolates to all $L^p$-spaces.
	\end{itemize}
\end{proposition}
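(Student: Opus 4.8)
The plan is to exploit the fact that the solution map of Theorem~\ref{thm:main} is assembled from only three ingredients---the coupling operator $\B$, the edge semigroups $(e^{t_jA_j})_{t_j\in I_j}$, and the operator $(\mathds{1}-\B\,e^{\au A})^{-1}$---combined by just two operations: composition of operators (pointwise in the time variable) and integration over the compact time-domain $(\Ge,\au)$. Since each of the three properties in question---being real, being positivity preserving, and being $L^p$-bounded in the spatial variable---is stable under both composition and under integration of the resulting operator-valued kernels, it suffices to check the property on the ingredients and then propagate it through the formula
\[
\psi(t)=e^{\tu A}(\mathds{1}-\B\,e^{\au A})^{-1}\gu+\int_{\Ge}\bigl(r_0(t,s;A_\Ee)+r_1(t,s;\B,A_\Ee)\bigr)f(s)\,ds,
\]
recalling that $r_1(t,s;\B,A_\Ee)=e^{\tu A}(\mathds{1}-\B\,e^{\au A})^{-1}\B\,e^{(\au-\su)A}$, while the nonzero blocks of $r_0$ are the decoupled Duhamel propagators $e^{(t_j-s_j)A_j}$.

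For (a) I would encode realness through the antilinear conjugation involution $C$ on $\Ke$, for which a $\C$-linear operator $T$ leaves $\Ke_\R$ invariant exactly when $CT=TC$. The hypotheses give $C\B=\B C$ and $Ce^{\au A}=e^{\au A}C$, the latter because $e^{\au A}$ is the block-diagonal matrix of the $e^{a_jA_j}$; hence $C$ commutes with $\mathds{1}-\B\,e^{\au A}$ and therefore with its inverse, since $C(\mathds{1}-\B\,e^{\au A})^{-1}C=(C(\mathds{1}-\B\,e^{\au A})C)^{-1}=(\mathds{1}-\B\,e^{\au A})^{-1}$. Thus $\psi_0(t)$ is real for real $\gu$, the same ingredients compose to form $r_0$ and $r_1$, and integrating real integrands over $(\Ge,\au)$ stays real, so the volume term is real for real $f$. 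Part (b) runs along identical lines, replacing ``commutes with $C$'' by ``positivity preserving'': each ingredient is positivity preserving by assumption, a composition of positivity preserving maps is again such, and the Bochner integral of a pointwise-positive integrand is positive; hence $\psi_0(t)\geq0$ and $\int_{\Ge}r(t,s;\B,A_\Ee)f(s)\,ds\geq0$ whenever $\gu,f\geq0$.

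For (c) the key is to run the same composition bookkeeping simultaneously on the spatial scales $L^1(S_j)$ and $L^\infty(S_j)$ and then interpolate. The boundary part $\psi_0(t)=e^{\tu A}(\mathds{1}-\B\,e^{\au A})^{-1}\gu$ is a pure composition, so it inherits $L^\infty$-boundedness directly---and $L^\infty$-contractivity as soon as both factors are contractive---and likewise $r_1$ is $L^\infty$-bounded as a product of $L^\infty$-bounded operators. For the decoupled Duhamel blocks I would estimate edgewise, by a Minkowski-type inequality, $\norm{\int_0^{t_j}e^{(t_j-s_j)A_j}f_j(s_j)\,ds_j}_{L^\infty(S_j)}\le M\int_0^{t_j}\norm{f_j(s_j)}_{L^\infty(S_j)}\,ds_j$, where $M$ bounds the semigroups uniformly on $L^\infty$, using that the total edge length is finite to keep this controlled. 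Under the additional $L^1$-bounds on $\B$ and $(\mathds{1}-\B\,e^{\au A})^{-1}$ the very same estimates hold on the $L^1$-scale, and Riesz--Thorin interpolation in the spatial variable then yields boundedness of the full solution operator on every $L^p(S_j)$, $p\in(1,\infty)$.

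The main obstacle will be precisely this last extrapolation step. The boundary part and the $r_1$-kernel are pure compositions and pose no difficulty, but the Duhamel part $\int r_0 f$ involves a genuine time integral, so one must invoke a Minkowski inequality to move the spatial $L^p$-norm inside the integral and exploit the finite total length of the compact time-graph to close the bound uniformly; only then do the simultaneous $L^1$- and $L^\infty$-bounds license the interpolation. By contrast, parts (a) and (b) reduce to the elementary observations that the inverse of a conjugation-commuting, respectively positivity-preserving, operator inherits the property, and that integration respects both.
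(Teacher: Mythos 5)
Your proposal is correct and follows essentially the same route as the paper: decompose the solution operator into $\psi_0$, the $r_0$-part and the $r_1$-part, check the relevant property (realness, positivity preservation, $L^\infty$/$L^1$-boundedness) on the three building blocks $\B$, $e^{\tu A}$ and $(\mathds{1}-\B\,e^{\au A})^{-1}$, and propagate it through composition and integration, with the classical (decoupled) theory covering $r_0$. The paper's proof is just a terser version of this; your added details (the conjugation involution for (a), the Minkowski estimate and Riesz--Thorin interpolation for (c)) merely make explicit what the paper declares ``apparent''.
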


\begin{proof}
We have shown in Proposition~\ref{prop:resolvent} that the Green's function is given by
$r_0(\cdot,\cdot;A)+r_1(\cdot,\cdot;\B,A)$.
It is apparent that the claimed properties for the solution to~\eqref{eq:firstCPLP} are proved as soon as corresponding properties hold for both $r_0(\cdot,\cdot;A),r_1(\cdot,\cdot;\B,A)$, and $\psi_0$ where the corresponding properties of $r_0$ are covered by the classical theory. Now, $r_1$ can be studied using its factorization into operators that also enjoy the corresponding properties.  For the mapping properties of $\psi_0$ analogous arguments apply.
\end{proof}

\subsection{Maximal $L^p$-regularity}\label{sec:maxlp}
For notational and mathematical simplicity, we have focused on the Hilbert space case and on maximal $L^2$-regularity.  In the case of evolution equations on $\mathbb R_+$, under the assumptions of Proposition~\ref{prop:mapp}.(c) the semigroup $e^{tA}$ extrapolates to a $C_0$-semigroup on all $L^p$-spaces, $p\in [1,\infty)$; this semigroup is additionally analytic  on $L^p$, $p\in [1,\infty)$, if $e^{tA}$ satisfies Gaussian estimates. By a celebrated result in~\cite{HiePru97} this implies in turn $L^p$-maximal regularity for $p\in (1,\infty)$, but our theory does not seem to allow us to discuss kernel estimates.
However, the solution formulae~\eqref{eq:ker-r} and~\eqref{eq:psivariat} suggest a straightforward generalization to the general case of maximal $L^p$-regularity in Banach spaces.

To this end, let $X_j$ be Banach spaces and $p\in (1,\infty)$. Consider the trace space
\begin{align*}
\Ke_{A,p}:= 
(X,D(A_\cV))_{1-1/p,p}= \bigoplus_{j\in \Ie} (X_j,D(A_j))_{1-1/p,p},
\end{align*} 
and collect the following assumptions.

\begin{assumption}\label{ass:Lpmaxreg}
Assume that $\Ee=\emptyset$, $X_j$ be Banach spaces of class UMD, and $p\in (1,\infty)$.
Suppose
that $-A_j$ are $\cR$-sectorial operators in $X_j$ of angle smaller than $\pi/2$, 
and that $\B\in \cL(\Ke)$. 
\end{assumption}

\begin{proposition}[Maximal $L^p$-regularity]\label{prop:Lpmaxreg} 
		Let the Assumption~\ref{ass:Lpmaxreg} be fulfilled, $\B$ be compatible with $\Ke_{A,p}$.  If $\mathds{1}-\B \, e^{\au A}$ is boundedly invertible in $\Ke_{A,p}$, 
		then for any $\gu\in \Ke_{A,p}$ and $f\in L^p(\Ge,\au;\cX)$ there is a unique solution
		\begin{align*}
		\psi\in W^{1,p}(\Ge,\au;\cX) \cap L^p(\Ge,\au;D(A)).
		\end{align*}
		 to \eqref{eq:firstCPLP}, where the solution is given by the same formulae as in Theorem~\ref{thm:main}
and
		\begin{align*}
		\norm{D_t\psi}_{L^p(\Ge,\au;\cX)} + \norm{A_\Ee\psi}_{L^p(\Ge,\au;\cX)} \leq C (\norm{f}_{L^p(\Ge,\au;\cX)} + \norm{\underline{d}}_{\Ke_{A,p}}).
		\end{align*}
\end{proposition}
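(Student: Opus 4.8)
The plan is to replay the Green's function argument of Theorem~\ref{thm:main} verbatim at the level of algebra, replacing each Hilbert-space regularity input by its $\cR$-sectorial, UMD counterpart. The solution candidate is unchanged,
\[
\psi(t) = \psi_0(t) + \int_{\Ge} r(t,s;\B,A_\Ee) f(s)\, ds = \psi_0 + \psi_{f}^0 + \psi_{f}^1,
\]
with $\psi_0(t)=e^{\tu A}(\mathds{1}-\B\,e^{\au A})^{-1}\gu$, $\psi_{f}^0=\int_{\Ge}r_0(\cdot,s;A_\Ee)f\,ds$ and $\psi_{f}^1=\int_{\Ge}r_1(\cdot,s;\B,A_\Ee)f\,ds$. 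The identities verifying that each summand solves $(\partial_{t_j}-A_j)\psi_j=f_j$ edgewise and that the traces satisfy $\underline{\psi}_--\B\underline{\psi}_+=\gu$ use only the semigroup law and the invertibility of $(\mathds{1}-\B\,e^{\au A})\vert_{\Ke_{A,p}}$; hence they carry over word for word from the proof of Theorem~\ref{thm:main}, with no inner-product structure intervening. Thus the whole task reduces to showing that each of the three summands lies in $W^{1,p}(\Ge,\au;\cX)\cap L^p(\Ge,\au;D(A))$ with the asserted norm bound.

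First I would treat $\psi_{f}^0$. By Assumption~\ref{ass:Lpmaxreg} each $-A_j$ is $\cR$-sectorial of angle $<\pi/2$ on the UMD space $X_j$, so by the Weis characterization of maximal $L^p$-regularity (see~\cite{DenHiePru03}) the edgewise initial value problem $(\partial_{t_j}-A_j)u=f_j$, $u(0)=0$, has a unique solution in $W^{1,p}(I_j;X_j)\cap L^p(I_j;D(A_j))$ depending boundedly on $f_j\in L^p(I_j;X_j)$. Summing over $j\in\Ie$ controls $\norm{D_t\psi_{f}^0}_{L^p}+\norm{A_\Ee\psi_{f}^0}_{L^p}$ by $C\norm{f}_{L^p}$.

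Second, the homogeneous pieces $\psi_0$ and $\psi_{f}^1$ are both of the form $t\mapsto e^{\tu A}h$ for some $h\in\Ke$, and the key input is the trace characterization of the maximal regularity space in the real-interpolation scale: for $h_j\in(X_j,D(A_j))_{1-1/p,p}$ one has $t\mapsto e^{t A_j}h_j\in W^{1,p}(I_j;X_j)\cap L^p(I_j;D(A_j))$ with norm $\lesssim\norm{h_j}_{(X_j,D(A_j))_{1-1/p,p}}$, and conversely the trace of any maximal-regularity function at an endpoint lies in this space, cf.~\cite[\S~3.4]{PruessSimonett}. For $\psi_0$ one uses that $(\mathds{1}-\B\,e^{\au A})^{-1}\gu\in\Ke_{A,p}$ by the invertibility hypothesis, so $\psi_0$ lies in the stated class with norm $\lesssim\norm{\gu}_{\Ke_{A,p}}$. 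For $\psi_{f}^1=e^{\tu A}(\mathds{1}-\B\,e^{\au A})^{-1}\B\int_0^{\au}e^{(\au-\su)A}f\,ds$ one observes that the inner vector $\int_0^{\au}e^{(\au-\su)A}f\,ds$ is exactly the terminal trace $\underline{(\psi_{f}^0)}_+$, which by the trace theorem lies in $\Ke_{A,p}$ with norm $\lesssim\norm{f}_{L^p}$; compatibility of $\B$ and boundedness of $(\mathds{1}-\B\,e^{\au A})^{-1}$ on $\Ke_{A,p}$ keep the vector in $\Ke_{A,p}$, and a final application of the trace theorem puts $\psi_{f}^1$ in the maximal regularity class. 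Adding the three bounds yields the estimate.

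Uniqueness is inherited verbatim from Theorem~\ref{thm:main}: a solution of the homogeneous problem with $\gu=0$ must be of the form $e^{\tu A}\gu'$ for some $\gu'\in\Ke_{A,p}$, and the boundary condition forces $(\mathds{1}-\B\,e^{\au A})\gu'=0$, hence $\gu'=0$ by invertibility. The main obstacle I anticipate is not any single estimate but the bookkeeping of interpolation functors: in the Hilbert-space case one exploited $[\cdot,\cdot]_{1/2}=(\cdot,\cdot)_{1/2,2}$, whereas here the correct space is genuinely the real-interpolation trace space $(X_j,D(A_j))_{1-1/p,p}$, and one must ensure that it is this space, rather than a complex-interpolation substitute, for which both the forward trace estimate and the two-sided trace characterization hold, and on which the hypotheses that $\B$ is compatible with $\Ke_{A,p}$ and that $\mathds{1}-\B\,e^{\au A}$ is invertible in $\Ke_{A,p}$ are imposed. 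Once this is pinned down, $\cR$-sectoriality (rather than mere sectoriality) supplies maximal regularity for $\psi_{f}^0$ via~\cite{DenHiePru03}, and every remaining step is a transcription of the $L^2$ argument.
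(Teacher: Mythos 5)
Your proposal is correct and follows essentially the same route as the paper: decompose the solution into $\psi_0+\psi_f^0+\psi_f^1$, obtain the bound on $\psi_f^0$ from the Weis characterization of maximal $L^p$-regularity for $\cR$-sectorial operators on UMD spaces, and place $\psi_0$ and $\psi_f^1$ in the maximal regularity class via the real-interpolation trace characterization (\cite[Prop.~3.4.2]{PruessSimonett}) applied to vectors in $\Ke_{A,p}$, with the algebraic verification of the equation, the boundary conditions and uniqueness carried over from Theorem~\ref{thm:main}. Your explicit identification of the inner vector in $\psi_f^1$ as the terminal trace of $\psi_f^0$, and your remark on real versus complex interpolation, only make explicit what the paper's terser proof leaves implicit.
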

\begin{proof}
The unperturbed part of the Green's function $r_0(\cdot,\cdot; A)$ defines a bounded operator 
\begin{align*}
L^p(\Ge,\au;\cX) \rightarrow D_t(\B)\cap L^p(\Ge,\au;D(A)), \quad \hbox{where }\B=0.
\end{align*}
It remains to verify that the correction terms have the same mapping properties. First,
\begin{align*}
	\psi_0(t)
		= e^{tA}(\mathds{1}-\B \, e^{\au A})^{-1}\gu,
\end{align*} 
and by assumption $(\mathds{1}-\B \, e^{\au A})^{-1}\gu\in \Ke_{A,p}$, and hence $\psi_0$ lies in the maximal regularity space, cf.\ \cite[Prop. 3.4.2]{PruessSimonett} which can be adapted to finite intervals.
Secondly,
	\begin{align*}
	 r_1(t,s;\B,A) = e^{\tu A} (\mathds{1}-\B \, e^{ A \au})^{-1}\B e^{(\au-\su)A}.
	\end{align*} 
Using that $\int_{\Ge}e^{(\au-\su)A} f(s) ds\in \Ke_{A,p}$ which follows from the classical variation of constants formula and maximal $L^p$-regularity of the initial value problem, it follows that $\int_{\Ge}r_1(t,s;\B,A) f(s)d$
 is in the maximal $L^p$-regularity space. 
 
 The proofs of Proposition~\ref{prop:resolvent} and Theorem~\ref{thm:main} now carry over to the present situation.
\end{proof}

\begin{remark}[Transference principle]
Transference principles which relate maximal $L^p$-regularity for the initial value problem to the maximal $L^p$-regularity problem with time-periodicity on
 the real line are well-established. 
Here, let $\Ee=\emptyset$ and $P(\B)$ for $\B=0$ have maximal $L^p$-regularity, then $P(\B)$ has maximal $L^p$-regularity for any $\B\in \cL(\Ke)$ satisfying the assumption of Proposition~\ref{prop:Lpmaxreg}. 
\end{remark}

\subsection{Regularity and other notions of solutions}\label{subsec:mild}
So far, we have focused on solvability in maximal regularity spaces, since these fit into a suitable functional analytic framework. 
Note that
the solution formula from Theorem~\ref{thm:main}  can be made sense of even under milder assumptions. Consider the case where $\Ee=\emptyset$, $X_j$ are Banach spaces, the operators $A_i$ generate $C_0$-semigroups in $X_i$, and
$\B\in \cL(\Ke)$. 
Classical results from semigroup theory carry over as long as sufficient compatibility of $\B$ is assumed. For instance, {smoother inhomogeneous transmission} data improve the regularity of solutions.

\subsubsection{Mild solutions}
Under the assumptions that
\begin{align*}
(\mathds{1}-\B \, e^{\au A})^{-1}\in \cL(\Ke), \quad \gu\in \Ke, \quad\hbox{and } f\in L^1(\Ge,\au\;\cX)
\end{align*}
the function defined by~\eqref{eq:psivariat} is a mild solution on each edge,
i.e., $\psi_j\in C(\overline{I_j};X_j)$ for each $j\in \Ie$, cf.
\cite[Prop.~1.3.4]{AreBatHie10} or \cite[Prop.~VI.7.4 and Prop.~VI.7.5]{EngNag00}), 
and the boundary conditions are attained in the larger space $\Ke$ while in Theorem~\ref{thm:main} they existed even in $\Ke_A$.

\subsubsection{Classical solutions} 
For $\B$ being compatible with $D(A_\cV)$, the conditions
\begin{align*}
(\mathds{1}-\B \, e^{\au A})^{-1}&\in \cL(D(A_\cV)), \quad \gu\in D(A_\cV), \quad\hbox{and }  \\
f=\{f_j\}_{j\in \Ie} \hbox{ with }  f_j&\in W^{1,1}(I_j;\cX_j) \hbox{ or } f_j\in C^{0}(\overline{I_j};\cX_j) \hbox{ for } j\in \Ie,   
\end{align*}
respectively, imply that the solution is classical on each edge, i.e., continuously differentiable with respect to time, cf.\ 
 \cite[Cor.~VI.7.6]{EngNag00} and \cite[Cor.~VI.7.8]{EngNag00}.
Of course there are many refinements of the classical semigroup theory which one can carry over to time-graphs by assuming sufficient compatibility between $\B$ and  the inverse of $\mathds{1}-\B \, e^{\au A}$.

\section{Iterative solvability}\label{sec:sym}\label{sec:iterati}

A time-graph Cauchy problem is iteratively solvable if it reduces to a finite sequence of initial value problems. 
This is made precise in the following definition.

\begin{definition}[Iterative solvability]
	Assume that $\Ee=\emptyset$ and $|\Ie|=n$, and that there exists an ordering of the edges $i_1, \ldots, i_n$ such that the solution to \eqref{eq:firstCP} satisfies
	\begin{align*}
	\partial_t \psi_n -A_n \psi_n = f_n, \quad \psi_n(0)-\B_{nn}\psi_n(a)= g_n
	\end{align*}
	and for any $1\leq j\leq n-1$ there exists some linear function $\varphi_{j+1}$ such that
	\begin{align*}
	\partial_t \psi_{j} -A_{j} \psi_{j} = f_{j}, \quad \psi_{j}(0)-\B_{jj}\psi_j(a_j)= \varphi_{j}(\psi_{j+1}, \ldots, \psi_n,g).
	\end{align*}
	Then we say that the \eqref{eq:firstCP} is \textit{iteratively solvable as a sequence of Cauchy problems on intervals}.
	If $\B_{jj}=0$ for all $j=1, \ldots,n$, then \eqref{eq:firstCP} is \textit{iteratively solvable as a sequence of initial value problems}.
\end{definition} 

Iterative solvability can be traced back to the block structure of $\B$. 
\begin{proposition}[Characterization if iterative solvability]\label{prop:iterativesol}
	Let $\Ee=\emptyset$ and $\B\in\cL(\Ke_A)$. 
\begin{enumerate}
\item[(a)] 
 The Cauchy problem \eqref{eq:firstCP} is iteratively solvable as a sequence of Cauchy problems on intervals if and only if, up to permutation of the edges, $\B$ is block tri-diagonal, i.e., there exists an ordering of the edges $i_1, \ldots, i_n$ such that $\B_{ij}=0$ for $j>i$. 
 \item[(b)] The Cauchy problem \eqref{eq:firstCP} is iteratively solvable as a sequence of initial value problems if and only if, up to permutation of the edges, $\B$ is block tri-diagonal with diagonal zero.
\end{enumerate}
\end{proposition}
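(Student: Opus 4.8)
The plan is to extract the block pattern of $\B$ directly from the coupling relation and match it against the cascade in the definition. The starting point is the row-wise form of \eqref{eq:introB}, namely
\[
\psi_j(0) - \B_{jj}\psi_j(a_j) = g_j + \sum_{i\neq j}\B_{ij}\psi_i(a_i), \qquad j=1,\dots,n,
\]
in which the constraint imposed on edge $j$ involves the final trace $\psi_i(a_i)$ precisely through the block $\B_{ij}$. Reading iterative solvability as the requirement that, after relabelling the edges, each such constraint refers only to edges already solved, both implications of (a) reduce to bookkeeping on this formula.

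For the direction ($\Leftarrow$) I would assume that, after a permutation, $\B_{ij}=0$ whenever $j>i$, so that the constraint becomes $\psi_j(0)-\B_{jj}\psi_j(a_j)=g_j+\sum_{i>j}\B_{ij}\psi_i(a_i)$ and refers only to the traces of $\psi_{j+1},\dots,\psi_n$. One then runs the cascade from the top: edge $n$ carries the genuine single-interval problem $\psi_n(0)-\B_{nn}\psi_n(a_n)=g_n$, solvable by the one-edge instance of Theorem~\ref{thm:main}; feeding the resulting $\psi_n(a_n)$ into the $(n-1)$-st row produces a single-interval problem with datum $\varphi_{n-1}=g_{n-1}+\B_{n-1,n}\psi_n(a_n)\in\Ke_A$, and one proceeds downward by induction. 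Here I would record that triangularity of $\B$ makes $\mathds{1}-\B\,e^{\au A}$ block triangular, so that its invertibility on $\Ke_A$ is governed by the diagonal blocks $\mathds{1}-\B_{jj}e^{a_j A_j}$; these are exactly the data needed to solve the individual cascade steps via Proposition~\ref{prop:resolvent} and Theorem~\ref{thm:main}.

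For ($\Rightarrow$) I would suppose the problem is iteratively solvable for the ordering $1,\dots,n$ and compare the prescribed form $\psi_j(0)-\B_{jj}\psi_j(a_j)=\varphi_j(\psi_{j+1},\dots,\psi_n,g)$ with the row above; since the only admissible choice is $\varphi_j=g_j+\sum_{i\neq j}\B_{ij}\psi_i(a_i)$, demanding that $\varphi_j$ depend solely on $\psi_{j+1},\dots,\psi_n$ and $g$ forces $\B_{ij}=0$ for all $i<j$, i.e.\ the stated pattern. The main obstacle I anticipate is making this last inference rigorous rather than merely formal: one must rule out that a nonzero block $\B_{ij}$ with $i<j$ is masked by a hidden relation expressing $\psi_i(a_i)$ through the later traces. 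I would close this gap by a controllability observation — varying the forcing $f_i$ on edge $i$ alone moves $\psi_i(a_i)$ over a dense subset of $\Ke_A$ while leaving $\psi_{j+1},\dots,\psi_n$ and $g$ unconstrained — so that any genuine dependence on $\psi_i(a_i)$ with $i<j$ would break the cascade. In combinatorial terms this is the statement that iterative solvability is equivalent to acyclicity of the dependency digraph carrying an arc $j\to i$ whenever $\B_{ij}\neq 0$ off the diagonal, which in turn is exactly permutation-similarity of $\B$ to a block triangular matrix, with a topological sort of the digraph supplying the ordering.

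Finally, part (b) follows by inspecting the diagonal blocks. A cascade step is an \emph{initial value} problem, i.e.\ of the form $\psi_j(0)=\varphi_j$, exactly when the self-coupling term $\B_{jj}\psi_j(a_j)$ is absent, that is $\B_{jj}=0$; thus iterative solvability as a sequence of initial value problems adds to the triangularity of (a) the vanishing of every diagonal block, yielding a strictly block triangular $\B$. Conversely, strict triangularity turns each step into an initial value problem, for which $\mathds{1}-\B_{jj}e^{a_j A_j}=\mathds{1}$ is trivially invertible, so no further hypothesis is needed.
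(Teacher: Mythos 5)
Your proposal is correct and follows essentially the same route as the paper: read off the block pattern of $\B$ from the row-wise form of the coupling, run the cascade downward for the triangular direction, and for the converse observe that $\varphi_j$ depending only on $\psi_{j+1},\dots,\psi_n,g$ forces the offending blocks to vanish. The only addition is your controllability remark justifying that a nonzero block cannot be masked by a hidden relation among traces — a point the paper's proof asserts without comment — which is a welcome but minor refinement rather than a different argument.
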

\begin{proof}
To prove (a) we start assuming that $\B$ is block tri-diagonal.
Then 
		\begin{align*}
		\begin{bmatrix}
		\psi_1(0) \\ \vdots \\ \psi_n(0)
		\end{bmatrix}
		-
		\begin{bmatrix}
		\B_{11} & \hdots & B_{1n}\\ 0 & \ddots & \vdots \\ 0 & 0 & \B_{nn}
		\end{bmatrix}
		\begin{bmatrix}
		\psi_1(a_1) \\ \vdots \\ \psi_n(a_n)
		\end{bmatrix} =
		\begin{bmatrix}
		g_1 \\ \vdots \\ g_n
		\end{bmatrix}.
		\end{align*}
		Hence $\psi_n$ is the solution to the Cauchy problem on $i_n$, and $\psi_j$ for $j=1, \ldots, n-1$ solves the Cauchy problem on $i_j$ with 
		\begin{align*}
		\psi_j(0)-\B_{jj}\psi_j(a)=g_j-\sum_{i=j+1}^n \B_{ji}\psi_i(a_i)=:\varphi_j(\psi_{j+1}, \ldots, \psi_n,g).
		\end{align*}
		Conversely, if \eqref{eq:firstCP} is iteratively solvable as a sequence of Cauchy problems on intervals, then there exists an ordering of the edges such that $\B_{n,j}=0$ for $j\neq n$, and since $\varphi_j$ depends only on $\psi_{j+1}, \ldots, \psi_n$ and $g$ one concludes that $\B_{ij}=0$ for $j>i$.
		
For (b) notice that on each step on has an initial value problem if and only if $\B_{jj}=0$ for all $j\in \Ie$. 		 
\end{proof}

\begin{remark}\label{rem:iterative_solvability}
Invertibility of $\mathds{1}-\B \, e^{\au A}$ is automatically satisfied under the assumptions of Proposition~\ref{prop:iterativesol}.(b), since
\begin{align*}
\mathds{1}-\B \, e^{\au A} =	
\begin{bmatrix}
		\mathds{1} & -\B_{12}e^{a_2A_2} & \ldots & -\B_{1n}e^{a_nA_n} \\ 
		 0 & \ddots & \ddots  & \vdots \\ 
		 \vdots & \ddots & \mathds{1}  & -\B_{(n-1)n}e^{a_nA_n}\\
		 0 & \cdots & 0  & \mathds{1}
		\end{bmatrix}.
\end{align*}
This was a crucial assumption in Theorem~\ref{thm:main}: the structure of the time-graph already implies unique solvability. 
Under the weaker assumptions of Proposition~\ref{prop:iterativesol}.(a), instead, invertibility of all $\mathds{1}-\B_{ii}e^{a_iA_i}$ has to be imposed additionally. 
\end{remark}

An oriented graph $(\Ge,\au)$ \textit{contains a directed loop} if there exists a sequence of edges $i_{1}, \ldots, i_m$ such that 
\[
\partial_+i_{m} = \partial_-i_{1},\quad \partial_+i_{1} = \partial_-i_{2},
\quad \ldots,\quad \partial_+i_{m-1} = \partial_-i_{m}.
\]
(We stress that this usage of the notion of loop is slightly different than in the literature on metric graphs in that we do not require the intersection of the loop's closure and its complement's closure (in the time graph) to be a singleton.
We say that a loop is \textit{reflected by boundary conditions} if  $\B_{i_j i_{(j+1)}}\neq 0$ for each $i_j \in \{i_{1}, \ldots, i_m\}$, where on sets $i_{m+1}:=i_1$.

\begin{corollary}[Loops prevent iterative solvability]
 Let $(\Ge,\au)$ contain a directed loop which is reflected by the boundary conditions.
 \begin{itemize}
 	\item[(a)] Then \eqref{eq:firstCP} is not iteratively solvable as a sequence of initial value problems.
 	\item[(b)] If the loop contains more than one edge, then 
 	\eqref{eq:firstCP} is not iteratively solvable as a sequence of Cauchy problems on intervals.
 \end{itemize} 
\end{corollary}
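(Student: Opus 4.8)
The plan is to obtain both assertions as purely combinatorial consequences of Proposition~\ref{prop:iterativesol}, each argued by contradiction: iterative solvability forces the nonzero blocks of $\B$ to respect a linear ordering of the edges, whereas a reflected directed loop produces a cyclic chain of nonzero blocks that no linear ordering can accommodate.

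First I would record what each notion of iterative solvability buys us. Suppose, toward a contradiction, that \eqref{eq:firstCP} is iteratively solvable---as a sequence of initial value problems in case (a), as a sequence of Cauchy problems on intervals in case (b). In either case Proposition~\ref{prop:iterativesol} supplies an ordering $e_1,\dots,e_n$ of the edges realizing the corresponding triangular form of $\B$; write $\rho(e_k):=k$, so that $\rho$ is a bijection from the edge set onto $\{1,\dots,n\}$. For (b), Proposition~\ref{prop:iterativesol}(a) gives $\B_{ij}=0$ whenever $j>i$ in the reordered indices, which reads
\[
\B_{ef}\neq 0\ \Longrightarrow\ \rho(f)\leq\rho(e)
\]
for every pair of edges $e,f$. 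For (a), Proposition~\ref{prop:iterativesol}(b) additionally forces the diagonal blocks to vanish, sharpening this to the strict inequality $\rho(f)<\rho(e)$.

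Next I would feed in the loop. Let $i_1,\dots,i_m$ be the reflected directed loop with the cyclic convention $i_{m+1}:=i_1$, so that $\B_{i_j i_{j+1}}\neq 0$ for every $j=1,\dots,m$. In case (a) each such nonvanishing block yields $\rho(i_{j+1})<\rho(i_j)$, and reading these around the cycle gives
\[
\rho(i_1)>\rho(i_2)>\cdots>\rho(i_m)>\rho(i_1),
\]
the absurdity $\rho(i_1)>\rho(i_1)$; this settles (a). In case (b) the blocks only yield the weak inequalities $\rho(i_{j+1})\leq\rho(i_j)$, so the cycle collapses to
\[
\rho(i_1)\geq\rho(i_2)\geq\cdots\geq\rho(i_m)\geq\rho(i_1),
\]
forcing all these positions to coincide; since $\rho$ is injective, $i_1=\cdots=i_m$, contradicting the hypothesis that the loop contains more than one edge. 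This settles (b).

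There is no analytic content here and hence no genuine obstacle; the only point that demands care is bookkeeping---keeping the direction of the triangular inequalities consistent with the convention fixed in Proposition~\ref{prop:iterativesol}. Since the loop closes up into a cycle, the argument is in fact insensitive to this direction: one always obtains a strict cyclic descent in case (a) and forced equality of all positions in case (b). I would finally note that the hypothesis $m>1$ in (b) is necessary: a single reflected self-loop ($m=1$, so $\B_{i_1 i_1}\neq 0$) obstructs solvability as initial value problems, by part (a), yet is perfectly compatible with solvability as a single Cauchy problem on one interval.
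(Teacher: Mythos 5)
Your argument is correct and follows essentially the same route as the paper: both reduce the statement to the triangularity characterization of Proposition~\ref{prop:iterativesol} and derive a combinatorial contradiction from the cyclic chain of nonzero blocks $\B_{i_j i_{j+1}}$. Your bookkeeping via the ordering map $\rho$ is somewhat more explicit than the paper's terse version (which treats (b) first and deduces (a) from it for $m\geq 2$), and your closing remark that a single reflected self-loop is compatible with solvability as one Cauchy problem correctly explains why the hypothesis $m>1$ is needed in (b).
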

\begin{proof}
	To prove (b): By assumption $m\geq 2$ and $B_{1,2}, \ldots, B_{(m-1),m}\neq 0$ and $\B_{m,1}\neq 0$. In particular for any permutation of the edges $\pi$ one has $B_{\pi(1)\pi(2)}\neq 0$ and $\B_{\pi(m)\pi(1)}\neq 0$, and therefore $\B$ cannot be block tri-diagonal and the claim follows from Proposition~\ref{prop:iterativesol} (a).
	
	To prove (a) use using Proposition~\ref{prop:iterativesol} (b) and note that if $m=1$, then $B_{11}\neq 0$, and if $m\geq 2$ this follows already from (b). 
\end{proof}

\begin{remark}[Graph symmetries and symmetries of solutions]
Periodic functions on $\R$ clearly induce functions on $\mathbb S^1$: are there further symmetries which can be encoded into a time-graph? Many graphs have a natural symmetry which corresponds to a group structure. Given a map  
\begin{align*}
T\colon (\Ge,\au) \rightarrow (\Ge,\au),
\end{align*}
this induces a map on function spaces
\begin{align*}
\hat{T}\colon	\{f\colon (\Ge,\au) \rightarrow X \} \rightarrow 	\{f\colon (\Ge,\au) \rightarrow X \}, \quad f\mapsto f\circ T, 
\end{align*}
see~\cite[\S~8.2]{Mug14}.
	Let $\Gamma$ be a group of mappings acting on $(\Ge,\au)$. Assume that
	\begin{equation}\label{eq:loopsym}
	\hat{G} D(D_t(\B))= D(D_t(\B)) \quad \hbox{and} \quad 	D_t(\B)\hat{ G} = \hat{G} D_t(\B)\qquad\hbox{	for all }G\in \Gamma.	
	\end{equation}
(For example, the shift on a loop satisfies~\eqref{eq:loopsym} with respect to the derivative operator with periodic boundary conditions.) It seems that there are very few graphs whose automorphism group is an infinite Lie group: in most cases, the automorphism group is finite. 
Having this, for any $G\in \Gamma$, $\hat{G}$ commutes with the solution operator given in Theorem~\ref{thm:main}. Thus the symmetry is reflected by the solution.
\end{remark}

\section{Examples and applications}\label{sec:examples}
The case of a loop with phase shift have been discussed already in the introduction as small modification of the classical periodic case. Also, the tadpole-like graph has been discussed there. We now discuss some other cases depicted in Figure~\ref{Fig2}.

\subsection{Splitting of systems}
Take the graph consisting of three internal edges as in Figure~\ref{Fig2}.(b), and consider for sectorial spatial operators $-A_i$ in Hilbert spaces $X_i$, $i=1,2,3$, the problem $\partial_t\psi_i -A_i\psi_i =f_i$, $ i=1,2,3$,  with homogeneous boundary conditions
\begin{align*}
 \psi_1(0)=g_1, \quad \psi_2(0) = \B_{21} \psi_1(a_1), \quad \psi_3(0) = \B_{31} \psi_1(a_1).
\end{align*}
This corresponds to~\eqref{eq:firstCPLP} for
\begin{align*}
\B= \begin{bmatrix}
0 & 0 & 0 \\
\B_{21} & 0 & 0 \\
\B_{31} & 0 & 0
\end{bmatrix} \quad \hbox{and}\quad
g = \begin{bmatrix}
g_1 \\ 0 \\ 0 
\end{bmatrix}.
\end{align*}
If $\B$ is compatible with the trace space $\Ke_A$, one observes that 
\begin{align*}
\mathds{1}-\B e^{\au \, A}=
\begin{bmatrix}
\mathds{1} & 0 & 0\\
-\B_{21} e^{a_1 A_1} & \mathds{1} & 0\\
-\B_{31} e^{a_1 A_1} & 0 & \mathds{1}\\
\end{bmatrix}
\end{align*}
is  invertible for any $\B_{21}, \B_{32}$, cf.\ Remark~\ref{rem:iterative_solvability}. Therefore by Theorem~\ref{thm:main} a unique solution to this problem exists for all $\gu\in \Ke_A$; in particular $\gu=(g_1,0,0)^T \in \Ke_A$ means that $g_1\in [X_1,D(A_1)]_{1/2}$. 

The tree graph given in Figure~\ref{Fig2}.(f) results from an iteration of such as splitting procedure, where as above any splitting condition as long as it is compatible with the trace space is admissible. That such splitting problems can be solved iteratively as sequence of initial value problems is straight forward, it can also be seen more formally by applying
Proposition~\ref{prop:iterativesol}.

\subsection{Superposition of systems}
Analogously, take the graph consisting of three internal edges as in Figure~\ref{Fig2}.(c), and consider for sectorial spatial operators $-A_i$ in Hilbert spaces $X_i$, $i=1,2,3$, the problem $\partial_t\psi_i -A_i\psi_i =f_i$, $ i=1,2,3$,  with homogeneous boundary conditions
\begin{align*}
\psi_1(0)=g_1, \quad \psi_2(0) = g_2, \quad \psi_3(0)=\B_{31}\psi_1(a_1) + \B_{32}\psi_2(a_2),
\end{align*}
for $i=1,2,3$.
This corresponds to~\eqref{eq:firstCPLP} for
\begin{align*}
\B= \begin{bmatrix}
0 & 0 & 0 \\
0 & 0 & 0 \\
\B_{31} & \B_{32} & 0
\end{bmatrix}, \quad \gu = \begin{bmatrix}
g_1 \\ g_2 \\ 0 
\end{bmatrix},\quad \hbox{where }
\mathds{1}-\B e^{\au \, A}=\begin{bmatrix}
\mathds{1} & 0 & 0\\
0 & \mathds{1} & 0\\
-\B_{31} e^{a_1 A_1} & -\B_{32} e^{a_2 A_2} & \mathds{1}\\
\end{bmatrix}.
\end{align*}
Assuming that $\B$ is compatible with $\Ke_A$ one observes that $\mathds{1}-\B e^{\au \, A}$
is invertible in $\Ke_A$, cf.\ Remark~\ref{rem:iterative_solvability}.  So,  Theorem~\ref{thm:main} is applicable again, and there is a unique solution to this problem if $g_1\in [X_1,D(A_1)]_{1/2}$ and $g_2\in [X_2,D(A_2)]_{1/2}$. One observes that this is iteratively solvable as a sequence of initial value problems, too.

\subsection{Tadpole graph}
Consider two edges with
\begin{align*}
\psi_1(0)=\psi_1(a_1)\quad \hbox{and}\quad \psi_2(0)=\B_{21}\psi_1(a_1)\quad \hbox{corresponding to } \B = \begin{bmatrix}
\mathds{1} & 0 \\
\B_{21}& 0
\end{bmatrix}.
\end{align*}
Note that $\mathds{1}-\B e^{\au A}$ is invertible if and only if $\mathds{1}-e^{a_1 A_1}$ is invertible. So, solvability is assured if for instance $\norm{e^{a_1 A_1}}<1$, i.e., the semigroup generated by $A_1$ is contractive and exponentially decaying.
This tadpole graph system can be interpreted as a time-periodic system where the output is used as initial data for a new system.
In the notion introduced in Section~\ref{sec:iterati}, this means the problem can be solved iteratively, first solving a time-periodic problem and then an initial value problem, the data of which depend on the solution obtained in the first step.
 
\subsection{Frequency dependent couplings}\label{subsec:Frequency dependent couplings}
Frequency-dependent transition conditions between time branches may also be considered. Assume for simplicity that $A_1=A_2$ are positive self-adjoint operators with discrete spectrum $k_1\leq k_2\leq \ldots$ (counted with multiplicities): any element in $\Ke$ can thus be expanded in terms of eigenfunctions.
For two edges one can e.g.\ consider the left shift operator $S_-$ defined by
\begin{align*}
\psi = \sum_{n\in \N} a_n \psi_n \mapsto S_-\psi:=\sum_{n\in \N} a_{n+1} \psi_n,
\end{align*}
where $(\psi_n)$ is an orthonormal basis of eigenfunctions: this induces a map also in $D(A^{1/2})$. Then
\begin{align*}
\B = \begin{bmatrix}
0 & 0 \\ 
0 & S_-
\end{bmatrix}
\end{align*}
is an admissible transmission condition where the first row induces an initial condition on $\psi_1(0)$ and the second is the frequency shift.
One may also consider a projection $P_I$ onto certain frequency ranges $I\subset \N$,
\begin{align*}
P_{I}\psi := \sum_{n\in I} a_n \psi_n
\end{align*}
and one could have a splitting of the system 
\begin{align*}
\psi_1(0)=0, \quad \psi_2(0)=P_{I_2}\psi_1(a_1), \quad \psi_3(0)=P_{I_3}\psi_1(a_1), \quad \psi_4(0)=P_{I_4'}\psi_2(0) + P_{I_4''}\psi_3(0)
\end{align*}
corresponding to
\begin{align*}
\B = \begin{bmatrix}
0 & 0 & 0 & 0 \\
P_{I_2} & 0 & 0 & 0 \\
P_{I_3} & 0 & 0 & 0 \\
0 & P_{I_4'} & P_{I_4''} & 0
\end{bmatrix}.
\end{align*}
This is iteratively solvable as sequence of initial value problems, and hence it is well-posed.

\subsection{Lions maximal $L^2$-regularity problem for non-autonomous Cauchy problems}
Lions' maximal regularity problem for non-autonomous Cauchy problems
considers
\begin{align*}
(\partial_t - A(t))\psi(t)=f(t), \quad \psi(0)=\psi_0
\end{align*}
for  a Hilbert space $X$ and $f\in L^2(0,T;X)$ and asks if the solution satisfies $\psi\in W^{1,2}(0,T;X)$; this would in turn imply that also $A(\cdot)\psi\in L^2(0,T;X)$. This problem has a long history and much remarkable work has been devoted to it. More precisely, depending on $A(\cdot)$ there are counterexamples as well as criteria which assure an affirmative answer: we refer the interested reader to~\cite{HieMon00} for an early study of maximal regularity for non-autonomous problems, and to
\cite{AreDieFac17} for further information and updated references.

The particular case of $A(\cdot)$ being a (matrix-valued) step function with matching trace spaces 
\begin{align*}
A(t)=\begin{cases}
A_1, & t \in (0,t_1), \\
\ldots & \\
A_n, & t \in (t_{n-1},t_n), \\
\end{cases} \quad \hbox{with} \quad D(A_j^{1/2})=D(A_i^{1/2})\hbox{ for all } 1\leq i,j\leq n.
\end{align*} has already been used by \cite{Laasri2016} as a first step to consider $A(\cdot)$ which are of bounded variation. The time-graph approach does not give any additional information at this point but it underlines the role of the compatibility assumption for the trace spaces.
Using our approach we directly see that the corresponding abstract time-graph Cauchy problem can be studied by means of 
\begin{align*}
\B = \begin{bmatrix}
0 & \cdots & \cdots & \cdots& 0 \\
1& \ddots& \ddots & \ddots & \vdots \\
0 & \ddots& \ddots & \ddots & \vdots \\
\vdots& \ddots & \ddots & \ddots & \vdots \\
0& \cdots & 0 & 1 & 0 
\end{bmatrix},
\end{align*}
hence it is iteratively solvable by Proposition~\ref{prop:iterativesol}.

\subsection{Outline on non-parabolic Cauchy problems}\label{sec:otherCPs}
So far, the focus has been on parabolic Cauchy problems, but the Green's function \textit{Ansatz} makes sense also in some non-parabolic situations. 

\subsubsection{Schr\"odinger equation}
Let us study the Schrödinger-type problem
\begin{equation}\label{eq:firstCP_Schroedinger}
\left\{
\begin{split}
(\partial_t - iA_j)\psi_j(t_j) &=f_j(t_j),\qquad t_j\in\bigsqcup_{j\in \Ie \cup \Ee } I_j,\\
 \underline{\psi}_- -\B \underline{\psi}_+ &=\gu.
\end{split}
\right.
\end{equation}
Provided that $\mathds{1}-\B \, e^{i\au A}$ is  invertible  in $\Ke$, the solution map 
\begin{align*}
S_{\B}(t) 
		= e^{itA}(\mathds{1}-\B \, e^{i\au A})^{-1}
\end{align*}
given by $\psi_0$ in Theorem~\ref{thm:main} is well-defined for all $\gu\in \Ke$ and defines a mild solution. (Notice that  invertiblility in $\Ke$ is sufficient since here we merely aim at mild solutions.) 
\begin{remark}
While the time-graph $\Ge$ need not display a group structure and, therefore, the issue of time-reversal need generally not be well-defined, we may still wonder whether the family of solution operators that govern ~\eqref{eq:firstCP_Schroedinger} consists of unitary operators.
	Assume that $A$ and $\B\in \cL(\Ke)$ be self-adjoint such that $e^{i\au A}$ and $\B$ commute, and $\mathds{1}-\B \, e^{i\au A}$ is invertible. Then 
	the solution operators to the time-graph Schr\"odinger equation	\eqref{eq:firstCP_Schroedinger} are unitary, 	 i.e., 
\begin{align*}
S_{\B}(t) S_{\B}(t)^* = S_{\B}(t)^* S_{\B}(t) = \mathds{1},
\end{align*}
if and only if $\B^2 = 2\B\cos(\underline{a}A)$.
This follows, using that all operators commute, from
\begin{align*}
e^{itA}(\mathds{1}-\B \, e^{i\au A})^{-1} (\mathds{1}- e^{-i\au A}\B)^{-1} e^{-itA}= 
(\mathds{1}-\B \, e^{i\au A})^{-1} (\mathds{1}- \B e^{-i\au A})^{-1}=
\mathds{1}
\end{align*}
which is in turn equivalent to
$\B^2 = 2\B\cos(\underline{a}A)$.
If, in particular, $\B$ is invertible, then the above condition amounts to $\B=2 \cos( \au A)$. Accordingly, 
there is a unitary  solution operator for fixed jump condition, i.e., $\B=\mathds{1}$, if and only if $\sigma(\au A) \subset \pi/3+2\pi\Z$. So, the classical case $\B=0$ is not the only case of a unitary  solution operator. Note that time inversion is still possible even for non-unitary solution operator, but the time-reversed dynamics differs from the time-forward dynamics and going forth and back is not necessarily equal to stay at one time. 
\end{remark}

\subsubsection{Second order Cauchy problem}
The above setting can be generalized to different kind of evolution equations.
Let $\B_1,\B_2$ be bounded linear operators on $\mathcal K^2$ and consider the second order Cauchy problem
\begin{equation}\label{eq:secCP}
\left\{
\begin{split}
(\partial^2_t - A_j)\psi_j(t_j) &=f_j(t_j),\qquad t_j\in\bigsqcup_{j\in \Ie \cup \Ee } I_j,\\
 \B_1 \underline{\psi}_+ -\underline{\psi}_- &=\gu_1\\
 \B_2 \underline{\psi_t}_+ -\underline{\psi_t}_- &=\gu_{2}.
\end{split}
\right.
\end{equation}
The idea is to decompose this into
\begin{align*}
(\partial_{t}^2 -A)\psi = (D_t(\B_1) - i|A|^{1/2})(D_t(\B_2) + i|A|^{1/2})\psi,
\end{align*}
where $A$ is skew-symmetric. Hence one has to solve two first order problems iteratively, where
assuming in addition to the assumptions of Theorem~\ref{thm:main} that $A_j$ for $j\in \Ie$ are self-adjoint and boundedly invertible. Then for any $\gu_1, \gu_2\in \Ke$ there is a unique solution to \eqref{eq:secCP}.

\subsection{Outline on mixed order systems} It is also possible to discuss evolution equations whose nature is different on each time branch; in particular, it is possible to define an operator on $\T$ which agrees with a first derivatives on a subset of $\T$ and with a second derivative on the remaining time branches. Defining appropriate transition conditions is, however, less obvious: a thorough discussion of ``well-behaved''
		transition conditions can be found in~\cite{HusMug13}.
Following these lines one can use the Kalton--Weis approach to solve a Cauchy problem of the type		
		 \begin{align*}
	(-\partial_t -A_1)\psi_1&=0, \quad \hbox{on} \quad [0,a_1], \\
	(\partial_{tt} - A_2)\psi_2&=0, \quad \hbox{on} \quad [0,a_2].
\end{align*}
Focusing on this example we consider operators $A_1,A_2$ in Hilbert spaces $X_1,X_2$
and couplings defined by 
\begin{align*}
(P+L)\underline{\psi} + P^{\perp} \underline{\psi}'=0, \quad
 \underline{\psi}=\begin{bmatrix}
2^{-1/2}(\psi_1(a_1) + \psi_1(0)) \\
\psi_2(a_2) \\
\psi_2(0) \\
\end{bmatrix}, 
\underline{\psi}'=\begin{bmatrix}
2^{-1/2}(-\psi_1(a_1) + \psi_1(0)) \\
-(\psi_2)_t(a_2) \\
(\psi_2)_t(0) \\
\end{bmatrix}
\end{align*}
for an orthogonal projection $P$ in $\Ke:= X_1\oplus X_2$, $P^{\perp}=\mathds{1}-P$ and $L\in \cL(\Ke)$ with $LP^{\perp}=P^{\perp}L$. With these couplings the operator $\T_{P,L}$ defined on $L^2(0,a_1;X_1)\oplus L^2(0,a_2;X_2)$ by
\begin{align*}
\begin{split}
&\T_{P,L} \psi_1 = -\psi_1', \quad \T_{P,L} \psi_2 = \psi_2'', \\ &D(\T_{P,L}):= \{\psi_1\in W^{1,2}(0,a_1;X_1), \psi_2\in W^{2,2}(0,a_1;X_1)\colon (P+L)\underline{\psi} + P^{\perp} \underline{\psi}'=0 \}
\end{split}
\end{align*} 
is m-dissipative if $-L$ is dissipative, see \cite[Theorem 4.1]{HusMug13}, and similarly to Corollary~\ref{cor:Hinfty} one concludes that it has a bounded $H^\infty$-calculus of angle $\pi/2$. If the spatial operator $-A$ is sectorial and commutes with the boundary conditions, the one can apply the Kalton--Weis theorem to obtain well-posedness in a maximal regularity space. A Green's function approach could be pursued as well on the lines of the Green's function from \cite[Proposition 6.6]{HusMug13}.

\section{A tentative interpretation of time-graphs}\label{sec:timetravel}

Several convenient properties of semigroups depend decisively on the order structure of the underlying set, so it is conceivable to relax the standard approach to time evolution and study semigroup-like operator families that are indexed on posets different from $\R_+$. 

One particularly simple case is that of a tree-like time structure. More precisely, we 
allow for a time that looks like a rooted tree, see Figure~\ref{Fig2}.(f). This seems to be conceptually very close to H.\ Everett's many worlds interpretation of quantum mechanics~\cite{DewGra15}, but our mathematical theory is not restricted to Schrödinger-type equations, see Section~\ref{sec:Green}, and a precise analysis of similarities and differences with Everett's interpretation goes far beyond the scope of this note.
In a very simplified synopsis 
the many worlds interpretation claims in order to conciliate probabilistic and deterministic interpretations of quantum mechanics that time splits at each point in time and each possible state is actually attained in one of the parallel universes.

Imagining parallel-universes, one would have no evidence of these in the case of a tree graph. Only if there is some interaction between different `time-branches', then one can know of the other now non-parallel but interacting universes. 
This leads to an interpretation of time-travel in terms of time-graphs, and it seems that time-graphs are a convenient way of picturing to oneself time-travel independently of its actual physical meaning.

Note that in the case of tree graphs, an evolutionary system can be solved iteratively starting with the first edge where some initial condition is imposed, one determines the state at the end of the edge which is the used as new initial conditions for the next level of the tree etc.   Interesting problems arise when oriented loops are allowed, as outlined in Section~\ref{sec:iterati}. Then the time-evolution can no longer be described iteratively and there is no clear direction distinguishing `before' and `after'. This is one of the problems occurring in the scientific interpretation of closed time-like curves in general relativity, cf.\ \cite[Chapter 5]{Hawking1973} for a discussion of closed time-like curves occurring in exact solutions to the Einstein equations.
 and much earlier this has spurred the imagination of science fiction authors.
  Just to mention a few, there are H.G.\ Wells 'novel \textit{The time machine} (1895) as well as 
  \textit{The Man in the High Castle} (1962) and further works written by Philip K.\ Dick between the 1950s and the 1970s, whose main theme are interacting parallel universes. 
  Variations of these themes and particular the so-called \textit{grandfather paradox} -- preventing one's one birth during a journey back in time or violating causality in some other way -- are at the origin of several mainstream movies, like the classic \textit{Back to the Future} (1985) or the more recent \textit{Looper} (2012), in whose plot a person is sent back from 2074 to 2044 so that he can meet and be killed by 
   his younger self\footnote{At one point, he urges his younger self to refrain from theoretical considerations stating ``I don't want to talk about
   time travel [...]. If we start talking about it,
   we're gonna be here all day.
   Making diagrams with straws'': a witty allusion to time as a graph.}.
   A further, different narrative trick is that of time-loops, illustrated for instance in 
   the movies \textit{Groundhog Day} (1993)
   or \textit{Miss Peregrine's Home for Peculiar Children} (2016), which tell the stories of  Phil Connors -- respectively, of a group of children and their guardian -- who get trapped in a time-loop around February 2, 1992 -- respectively, September 3, 1943 --, before eventually managing to escape: in mathematical terms, this does not mean that a certain function is periodic (the days spent by the main characters in either loop are \textit{not} identical),  but rather that it \textit{only} satisfies a certain identity condition at different instants (Phil Connors' environment is reset every day at 6:00 am, and so is the environment of the peculiar children and their guardian, every day at 9:07 pm): this time development can be captured by Figure~\ref{Fig2}.(e) or Figure~\ref{Fig6}.(d). 
   	This suggests a much more down-to-earth interpretation of evolution on branching time structures: namely, it conveniently allows us to formalize the requirement that solutions at different time instants respect certain algebraic relations. 

  The question of whether such fictional situations can be reconciled with our deeply rooted perception of time as linear seems for us related to the problem of representing a time-graph Cauchy problem as a sequence of initial value problems which can be solved one after another. As we have pointed out in Section~\ref{sec:iterati} this is closely related to the absence of loops inside the time-graph, and in this case the solution operator acts in a truly global (in time) fashion, as the solution at some point depends on all other times including future-like times.

\subsection{Time travel, multiverses and the grandfather paradox}
A time-travel scenario can be depicted as in Figure~\ref{Fig2}.(g) with a link between some point in the future and a point which might be in the past.
Considering such a graph one can e.g. impose the transmission conditions
\begin{align*}
\psi_1(0)=\gu_1, \quad \psi_{2}(0) = \psi_{1}(a_1)+ \psi_{4}(a_4), \quad \psi_{3}(0) = \psi_{2}(a_2), \quad \psi_{4}(0) = \psi_{2}(a_2) 
\end{align*}
that correspond to
\begin{align*}
\B = \begin{bmatrix}
0 & 0 & 0 & 0 \\
1 & 0 & 0 & 1 \\
0 & 1 & 0 & 0 \\
0 & 1 & 0 & 0 \\
\end{bmatrix}, \quad g=\begin{bmatrix}
\gu_1 \\ 0 \\ 0 \\ 0 
\end{bmatrix}.
\end{align*}
Therefore, this is not solvable as sequence of Cauchy problems on intervals, but 
well-posed in the sense of Theorem~\ref{thm:main}
 under suitable assumptions on the spatial operators $A_i$. 
In a sense, time-travel occurs in this deterministic world, but there is no free will to cause a grandfather paradox: the system is forced to be contradiction-free. This  resembles the case of time-periodic solutions. Given a solution $\psi$ to 
\begin{align*}
\partial_t \psi - A\psi = f, \quad \psi(0)=\psi(1),
\end{align*}
one can compare this to the solution to
\begin{align*}
\partial_t \varphi - A\varphi = f, \quad \varphi(0)=\psi(0).
\end{align*} 
Due to the uniqueness of solutions the system comes back to its original state, i.e., $\psi=\varphi$ and $\varphi(1)=\psi(1)=\psi(0)$. Living in a time-periodic world, would thus be locally like living in a time-interval world with initial conditions, but nevertheless  globally it is time-periodic. Similarly, in the scenarios of Figure~\ref{Fig2}.(g) or Figure~\ref{Fig6}.(a), solutions have in time non-local constraints which are seen only on a global level.

Considering the graph from Figure~\ref{Fig2}.(h) one can e.g.\ impose the transmission conditions
\begin{align*}
\psi_1(0)=\gu_1, \quad \psi_{2}(0) = \psi_{1}(a_1), \quad \psi_{3}(0) = \psi_{2}(a_2), \quad \psi_{4}(0) = \psi_{2}(a_2), \quad \psi_{5}(0)=
\psi_{4}(a_4)+\psi_{1}(a_1)
\end{align*}
which leads to
\begin{align*}
\B = \begin{bmatrix}
0 & 0 & 0 & 0 & 0 \\
1 & 0 & 0 & 0 & 0 \\
0 & 1 & 0 & 0 & 0\\
0 & 1 & 0 & 0 & 0\\
1 & 0 & 0 & 1 & 0\\
\end{bmatrix}
, \quad g=\begin{bmatrix}
\gu_1 \\ 0 \\ 0 \\ 0 \\0 
\end{bmatrix}.
\end{align*}
This is solvable as sequence of initial value problems, because $B_{14}=0$ the loop in the graph is not reflected by boundary conditions. A grandfather paradox does not occur because we actually have a sequence of initial value problems, and this seems the way we represent time-travel in our thoughts when watching a science fiction movie: A time traveler reaches a past where the initial conditions are the actual state of the past \textit{plus} the time traveler. This mixer gives new initial conditions which lead to new events which actually do not affect the time from which the time traveler comes from. Traveling back to the present does not lead to any contradictions since one has a simple superposition of two time branches so that changes due to the altered time branch can be incorporated. This becomes more transparent when as in  Figure~\ref{Fig6}.(c) -- compared to Figure~\ref{Fig6}.(b) --  an auxiliary edge is inserted. 

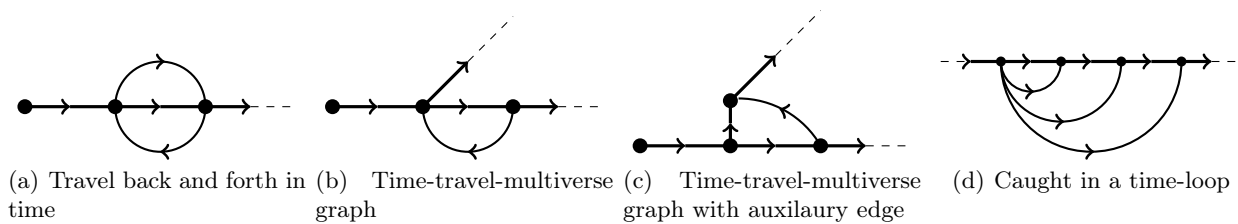
\begin{figure}[h]\label{fig:various_graphs}
	\begin{center}	
\subfigure[Travel back and forth in time]{	
	\begin{tikzpicture}[scale=0.6]
			\fill[black] (0,0) circle (1ex);
			\draw[->, black, very thick] (0,0) -- (1,0);
			\draw[black, very thick] (1,0) -- (2,0);
			\fill[black] (2,0) circle (1ex);
			\draw[->, black, very thick] (2,0) -- (3,0);
			\draw[black, very thick] (3,0) -- (4,0);
			\fill[black] (4,0) circle (1ex);
			\draw[->, black, very thick] (4,0) -- (5,0);
			\draw[black, dashed] (5,0) -- (6,0);
			\draw[->, black, very thick] (3.1,-1) -- (3,-1);
			\draw[thick] (4,0) arc (0:-180:1);
			\draw[thick] (4,0) arc (0:180:1);	
				\draw[->, black, very thick] (3,1) -- (3.1,1);
			\end{tikzpicture}
			}
\subfigure[Time-travel-multiverse graph]{	\begin{tikzpicture}[scale=0.6]
			\fill[black] (0,0) circle (1ex);
			\draw[->, black, very thick] (0,0) -- (1,0);
			\draw[black, very thick] (1,0) -- (2,0);
			\fill[black] (2,0) circle (1ex);
			\draw[->, black, very thick] (2,0) -- (3,0);
			\draw[black, very thick] (3,0) -- (4,0);
			\fill[black] (4,0) circle (1ex);
			\draw[->, black, very thick] (4,0) -- (5,0);
			\draw[black, dashed] (5,0) -- (6,0);
			\draw[->, black, very thick] (3.1,-1) -- (3,-1);
			\draw[thick] (4,0) arc (0:-180:1);
			\draw[->, black, very thick] (2,0) -- (3,1);
			\draw[black, dashed] (3,1) -- (4,2);	
			\end{tikzpicture}
		}
\subfigure[Time-travel-multiverse graph with auxilaury edge]{	\begin{tikzpicture}[scale=0.6]
			\fill[black] (0,0) circle (1ex);
			\draw[->, black, very thick] (0,0) -- (1,0);
			\draw[black, very thick] (1,0) -- (2,0);
			\fill[black] (2,0) circle (1ex);
			\draw[->, black, very thick] (2,0) -- (3,0);
			\draw[black, very thick] (3,0) -- (4,0);
			\fill[black] (4,0) circle (1ex);
			\draw[->, black, very thick] (4,0) -- (5,0);
			\draw[black, dashed] (5,0) -- (6,0);
		
			\draw[thick] (4,0) arc (30:90:2.1);
				\draw[<-, black, very thick] (3.09,0.82) -- (3.19,0.785);
\fill[black] (2,1) circle (1ex);		
		
			\draw[->, black, very thick] (2,0) -- (2,0.5);
			\draw[black, very thick] (2,0.5) -- (2,1);
		\draw[->, black, very thick] (2,1) -- (3,2);	
			\draw[black, dashed] (3,2) -- (4,3);	
			\end{tikzpicture}
		}	
	\subfigure[Caught in a time-loop]{	
	\begin{tikzpicture}[scale=0.4]
		\draw[black, very thick] (-1,0) -- (0,0);
		\draw[->, black, very thick] (-1.1,0) -- (-1,0);	
		\draw[black, dashed] (-2,0) -- (-1,0);
		\fill[black] (0,0) circle (1ex);
		\draw[->, black, very thick] (0,0) -- (1,0);
		\draw[black, very thick] (1,0) -- (2,0);
		\fill[black] (2,0) circle (1ex);
		\draw[->, black, very thick] (2,0) -- (3,0);
		\draw[black, very thick] (3,0) -- (4,0);
		\fill[black] (4,0) circle (1ex);
		\draw[->, black, very thick] (4,0) -- (5,0);
		\draw[black, very thick] (5,0) -- (6,0);
		\fill[black] (6,0) circle (1ex);
		\draw[<-, black, very thick] (1.1,-1) -- (1,-1);
		\draw[thick] (2,0) arc (0:-180:1);
		\draw[<-, black, very thick] (2.1,-2) -- (2,-2);
		\draw[thick] (4,0) arc (0:-180:2);
		\draw[<-, black, very thick] (3.1,-3) -- (3,-3);
		\draw[thick] (6,0) arc (0:-180:3);			\draw[->, black, very thick] (6,0) -- (7,0);
		\draw[black, dashed] (7,0) -- (8,0);
		\end{tikzpicture}
	}
	\end{center}
		\caption{Time travel with and without parallel universes, and time-loop}\label{Fig6}
\end{figure}

\subsection{Caught in a time-loop}
The scenario that describes being caught in a time-loop just as in  \textit{Groundhog Day} can also be represented by time-graphs. Having a time-graph as in  Figure~\ref{Fig6}~(d) one can impose at the first vertex 
a splitting into several copies of the world. At each subsequent vertex there is a superposition of this original state the time evolution of the incoming edge, where the superposition is such that only the main character is replaced while all the rest goes back to the original state. Such conditions would allow for iterative solvability as a sequence of initial value problems.

Representing such a plot properly would, in addition, need a dynamical graph the development of which depends on the solution; one would also need to incorporate an end-condition stating that if the solution reaches a certain state, then the time evolution proceeds as a usual time axis. This would be a nonlinear feature. Typically, such an end-condition consists in the main character's reaching a certain goal or a key insight into the meaning of life.

Summarizing, it seems that our thinking is preassigned  to represent time as linear with well-specified past and future, and even when imagining  science fictional scenarios of time travel, time-loops and parallel universes we search for an ordering asking 'what happened first?', 'what happened then?', '$\ldots$ and then?' etc. So, in our language every science-fictional scenario needs a representation as a sequence of iteratively solvable sequence of initial value problems. The other way round, thinking of a properly time-periodic movie would be quite repetitive.

\bibliographystyle{alpha}
\bibliography{literatur}

\end{document}